\title{Sharp convergence rate of Schrödinger operator along curves}
\author{Meng Wang, Zhichao Wang}
\newtheorem{theorem}{Theorem}[section]
\newtheorem{lemma}[theorem]{Lemma}
\newtheorem{proposition}[theorem]{Proposition}
\begin{document}
	\begin{sloppypar}
		\maketitle
		\let\thefootnote\relax\footnotetext{This work is supported  in part by the National Natural Science Foundation of China (No.12371100 and No. 12171424)\\	2020 Mathematics Subject Classification: 42B25.\\
			\textit{Key words and phrases}: Schrödinger mean,  Convergence rate, Maximal functions, Along curves.}
		\begin{abstract}
			We analyze convergence rate of Schrödinger operator along curves $U_{\gamma}f(x,t)$, where $ f \in H^{s}\left(\mathbb{R}^{d}\right) $. All results are sharp up to the endpoints.
		\end{abstract}
		
		\section{Introduction}
		\numberwithin{equation}{section}
		
		In this paper, we consider the Schrödinger equation as follows
		\[
			\left\{\begin{array}{ll}
				i u_{t}-\Delta u=0 & (x, t) \in \mathbb{R}^{d} \times \mathbb{R} \\
				u(x, 0)=f(x) & x \in \mathbb{R}^{d}.
			\end{array}\right.
		\]
		 The solution to this equation can be formally written as
		\[
			e^{i t (-\Delta)} f(x)=\frac{1}{(2\pi)^{d}}\int_{ \mathbb{R}^{d}} e^{i\left(x \cdot \xi+t|\xi|^{2}\right)} \widehat{f}(\xi) d \xi.
		\]
		Carleson \cite{MR0576038} put forward a question about exploring the minimal $s$ such that for any 
		$ f \in H^{s}\left(\mathbb{R}^{d}\right)$,
		there holds the almost everywhere convergence
		\begin{equation}\label{1.2}
		\underset{t \rightarrow 0}{\lim} ~e^{i t (-\Delta)} f(x)=f(x), \quad a.e.
		\end{equation}
		It has been studied by several authors \cite{MR0576038,MR0654188,MR0848141,MR0729347,MR0904948,MR0934859,MR1315543,MR1413873,MR1748920,MR2264734,MR3241836,MR3613507,Demeter2016SchrdingerMF,MR3574661,MR2044636,MR3903115,MR3702674,MR3961084}. In particular, Carleson \cite{MR0576038} pointed \( s \geq \frac{1}{4} \) when \( d=1 \) and  Du--Zhang \cite{MR3961084} proved \(s>\frac{d}{2(d+1)} \) when \( d\geq 2 \) are sharp results according to the counterexamples made by Dahlberg--Kenig \cite{MR0654188} in $d=1$ and
		Bourgain \cite{MR3574661}, Luc\`a--Rogers \cite{MR3903115} in $d\geq  2$.
		
		Some related issues have also been widely studied, including the pointwise convergence problem of the Schrödinger operator along curves
		\[
			U_\gamma f(x,t)=\frac{1}{(2\pi)^{d}}\int_{ \mathbb{R}^{d}}  e^{i\left(\gamma(x,t) \cdot \xi+t|\xi|^{2}\right)} \widehat{f}(\xi) d \xi.
		\]
		Here, we will always assume \(
		\gamma: \mathbb{R}^{d} \times[-1,1] \rightarrow \mathbb{R}^{d}, \quad \gamma(x, 0)=x
		\) is a continuous function satisfying bilipschitz continuous in \( x \), that is
		\[
		\frac{1}{C_{1}}\left|x-x^{\prime}\right| \leq\left|\gamma(x, t)-\gamma\left(x^{\prime}, t\right)\right| \leq C_{2}\left|x-x^{\prime}\right|, \quad t \in[-1,1], \quad x, x^{\prime} \in \mathbb{R}^{d} .
		\]
		The curve will be divided into tangential case $(0<\alpha<1)$ and non-tangential case $(\alpha=1)$ according to the Hölder continuous condition \( \alpha \in(0,1] \) in \( t \) , that is
		\[
			\left|\gamma(x, t)-\gamma\left(x, t^{\prime}\right)\right| \leq C_{3}\left|t-t^{\prime}\right|^{\alpha}, \quad x \in \mathbb{R}^{d}, \quad t, t^{\prime} \in[-1,1].
		\]
		
		For non-tangential case, Lee--Rogers \cite{MR2871144} showed the convergence problem is essentially equivalent to the traditional case (\ref{1.2}). The tangential case is more complex. In $d=1$, Cho--Lee--Vargas \cite{MR2970037} observed \( s>\max \left\{\frac{1}{4}, \frac{1-2 \alpha}{2}\right\} \) is the sharp sufficient condition. In $d\geq2$,  Minguill\'on \cite{MR4795804} and Cao--Miao \cite{MR4564456} got  the sharp result $s>\frac{d}{2(d+1)}$ when $\frac{1}{2}\leq \alpha\leq1$. However, the sharp sufficient condition for  $0< \alpha<\frac{1}{2}$ remains open.

		The convergence rate of Schrödinger operator is first considered by Cao-Fan-Wang \cite{MR3922421}, which has been improved to sharp by Wang--Zhao \cite{MR4877623} in $d=1$ and Fan--Wang \cite{Fan-Wang} in higher dimension.  To elucidate, after reaching the minimum regularity requirement $s>\frac{d}{2(d+1)}$ for almost everywhere convergence, if continues to increase the regularity of the function, what kind of convergence rate can it bring? It can be transformed to determine the optimal $s$ for \( f \in H^{s}\left(\mathbb{R}^{d}\right) \) to ensure
		\[
			\lim _{t \rightarrow 0} \frac{e^{i t (-\Delta)} f(x)-f(x)}{t^\delta}=0 \quad \text { a.e. } x \in \mathbb{R}^{d}.
		\]
		
		The convergence rate of Schrödinger operators along tangential curves is considered by Li--Wang \cite{MR4297035,Li2021OnCP}. In this paper, we investigate the convergence rate of the Schrödinger operator along curves $U_\gamma f(x,t)$, which is sharp to the endpoint.

	%	
	%	Our proof mainly relies on the dyadic decomposition of time intervals and the following fine estimates
	%	\[
	%	\left\|\sup _{t \in J }\left|U_{\gamma}^m f\right|\right\|_{L^{2}(B(0,1))} \leq 
	%	C\lambda^{s_1} |J|^{s_2}\|f\|_{L^{2}\left(\mathbb{R}^{d}\right)},
	%	\]
	%	where all \( f \) have supp \( \hat{f} \subset\left\{\xi:|\xi|\sim \lambda \right\} \).
		
		\subsection{Convergence rate of Schrödinger operator along curves}\label{chap2}
		\numberwithin{equation}{section}
		
		~\\
		
		When the curve is bilipschitz continuous in \( x \) and Lipschitz continuous in \( t \), we obtain the following theorem. It's consistent with the properties of $e^{it(-\Delta)} f$, which is also intuitive.
		\begin{theorem}\label{theorem1}
			Let $\delta \in[0,1)$. For any curve $\gamma: \mathbb{R}^{d} \times[-1,1] \rightarrow \mathbb{R}^{d} $satisfying $\gamma(x,0)=x$  which is bilipschitz continuous in \( x \) and lipschitz continuous in \( t \), we have
			\begin{equation}\label{pointwise convergence in rn}
				\lim _{t \rightarrow 0} \frac{U_{\gamma} f(x, t)-f(x)}{t^\delta}=0 \quad \text { a.e. } x \in \mathbb{R}^{d} ,\quad \forall f \in H^{s}\left(\mathbb{R}^{d}\right)
			\end{equation}
			whenever 
			\[
			s>s(\delta)=\begin{cases}
				\frac{d}{2(d+1)} +\delta   \quad  &0 \leq \delta\leq \frac{d}{2(d+1)}, \\ 
				2\delta   \quad   &  \delta>\frac{d}{2(d+1)}.  \\
			\end{cases}
			\]
			Conversely, there exists a curve satisfying the previous assumption, while (\ref{pointwise convergence in rn}) fails whenever $s<s(\delta)$.
		\end{theorem}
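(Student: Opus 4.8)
The plan is to deduce \eqref{pointwise convergence in rn} from the maximal inequality
\[
\Big\|\sup_{0<|t|\le1}\frac{|U_\gamma f(\cdot,t)-f|}{|t|^\delta}\Big\|_{L^2(B)}\lesssim\|f\|_{H^s(\mathbb R^d)},\qquad s>s(\delta),
\]
valid on every unit ball $B\subset\mathbb R^d$. Granting this, \eqref{pointwise convergence in rn} follows by the usual density argument: for Schwartz $f$ one has, using the identity $U_\gamma f(x,t)=e^{it(-\Delta)}f(\gamma(x,t))$,
\[
|U_\gamma f(x,t)-f(x)|\le|e^{it(-\Delta)}f(\gamma(x,t))-f(\gamma(x,t))|+|f(\gamma(x,t))-f(x)|\lesssim_f|t|+|\gamma(x,t)-x|\lesssim_f|t|
\]
by smoothness and the Lipschitz hypothesis in $t$, so the quotient is $O_f(|t|^{1-\delta})\to0$ as $t\to0$ since $\delta<1$; the maximal inequality then extends this to all of $H^s$ via Chebyshev. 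By symmetry I work with $t\in(0,1]$. After a Littlewood--Paley decomposition $f=\sum_{k\ge0}f_k$ with $\widehat{f_k}$ supported in $|\xi|\sim2^k$, it suffices (the piece $k=0$ being harmless, and the sum over $k$ closing by Cauchy--Schwarz once $s>s(\delta)+\epsilon$) to prove
\[
\Big\|\sup_{0<t\le1}\frac{|U_\gamma f_k(\cdot,t)-f_k|}{t^\delta}\Big\|_{L^2(B)}\lesssim_\epsilon 2^{k(s(\delta)+\epsilon)}\|f_k\|_{L^2}.
\]
The crucial choice is to split the $t$-interval at the scale $t\sim2^{-k}$ --- \emph{not} at the parabolic scale $2^{-2k}$, which would give the wrong exponent.

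For $2^{-k}<t\le1$ one bounds $|U_\gamma f_k(x,t)-f_k(x)|\le|U_\gamma f_k(x,t)|+|f_k(x)|$ and $t^{-\delta}\le2^{k\delta}$, and invokes the non-tangential curve maximal estimate
\[
\Big\|\sup_{0<t\le1}|U_\gamma f_k(\cdot,t)|\Big\|_{L^2(B)}\lesssim_\epsilon 2^{k(\frac{d}{2(d+1)}+\epsilon)}\|f_k\|_{L^2},
\]
which for $\alpha=1$ is Lee--Rogers' reduction \cite{MR2871144} to the Carleson problem combined with Du--Zhang \cite{MR3961084}; this yields a contribution of size $2^{k(\delta+\frac{d}{2(d+1)})}\le2^{k\,s(\delta)}$. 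For $0<t\le2^{-k}$ one writes
\[
U_\gamma f_k(x,t)-f_k(x)=\big(e^{it(-\Delta)}f_k-f_k\big)(\gamma(x,t))+\big(f_k(\gamma(x,t))-f_k(x)\big),
\]
the Lipschitz hypothesis giving $|\gamma(x,t)-x|\le C_3 t\le C_3 2^{-k}$. The second bracket is estimated by the mean value theorem, Bernstein's inequality for $\nabla f_k$, and the pointwise domination of a translation by $O(2^{-k})$ of a frequency-$2^k$ function by a fixed $L^1$-normalised mollification $\phi_k$ at scale $2^{-k}$; this gives $\sup_{0<t\le2^{-k}}t^{-\delta}|f_k(\gamma(x,t))-f_k(x)|\lesssim2^{k\delta}(\phi_k*|f_k|)(x)$, hence an $L^2(B)$ contribution $\lesssim2^{k\delta}\|f_k\|_{L^2}\le2^{k\,s(\delta)}\|f_k\|_{L^2}$.

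For the first bracket, since each $e^{it(-\Delta)}f_k-f_k$ again has frequency $\sim2^k$ and $\gamma(x,t)$ stays within $O(2^{-k})$ of $x$, the same domination and the elementary inequality $\sup_t(\phi_k*g_t)\le\phi_k*\sup_t g_t$ (for $g_t\ge0$) give
\[
\sup_{0<t\le2^{-k}}\frac{|(e^{it(-\Delta)}f_k-f_k)(\gamma(x,t))|}{t^\delta}\le\big(\phi_k*V_k\big)(x),\qquad V_k(y):=\sup_{0<t\le2^{-k}}\frac{|e^{it(-\Delta)}f_k(y)-f_k(y)|}{t^\delta}.
\]
Splitting $\phi_k=\phi_k\mathbf{1}_{|z|\le1/10}+\phi_k\mathbf{1}_{|z|>1/10}$ and absorbing the tail with the crude uniform bound $\|V_k\|_{L^2(\mathbb R^d)}\lesssim2^{2k}\|f_k\|_{L^2}$ (from multiplier estimates and Sobolev embedding in $t$), one is reduced to $\|V_k\|_{L^2(2B)}$. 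But this is exactly --- up to restricting $t$ to $(0,2^{-k}]$ and enlarging the ball, both harmless --- the sharp convergence-rate maximal estimate for the free Schrödinger evolution, i.e.\ the theorem of Fan--Wang \cite{Fan-Wang} (and Wang--Zhao \cite{MR4877623} when $d=1$): $\|\sup_{0<t\le1}t^{-\delta}|e^{it(-\Delta)}f_k-f_k|\|_{L^2(2B)}\lesssim_\epsilon 2^{k(s(\delta)+\epsilon)}\|f_k\|_{L^2}$. Invoking it here and collecting the three contributions completes the positive direction. (The proof of the free-evolution estimate is where the real depth lies: after parabolic rescaling to unit frequency it becomes an estimate for $\sup_{0<t'\le2^k}t'^{-\delta}|e^{it'(-\Delta)}g-g|$ on a ball of radius $2^k$; one discards $t'\lesssim1$, decomposes $t'\sim T$ dyadically, uses $t'^{-\delta}\lesssim T^{-\delta}$, and feeds Du--Zhang's maximal/refined-Strichartz bounds into each time scale, the weight $T^{-\delta}$ supplying exactly the passage from Du--Zhang's exponent $\tfrac{d}{2(d+1)}$ to $s(\delta)$.)

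For the converse it suffices to take the identity curve $\gamma(x,t)\equiv x$, which satisfies the hypotheses and reduces the statement to the known sharpness of the convergence rate for $e^{it(-\Delta)}$. When $s<2\delta$ a single wave packet obstructs it: for $\widehat f=R^{-s}\chi(\cdot-Re_1)$ with $\chi$ a bump on $B(0,1)$ one has $\|f\|_{H^s}\sim1$, while at $t_0=\pi R^{-2}$ the factor $e^{it_0R^2}-1=-2$ forces $|e^{it_0(-\Delta)}f-f|\gtrsim R^{-s}$ on a fixed ball, so $\sup_{0<t\le1}t^{-\delta}|e^{it(-\Delta)}f-f|\gtrsim R^{2\delta-s}\to\infty$; superposing such packets along a lacunary frequency sequence produces an $f\in H^s$ for which \eqref{pointwise convergence in rn} fails on a set of positive measure. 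When $s<\tfrac{d}{2(d+1)}+\delta$ one instead inserts the Bourgain \cite{MR3574661} / Lucà--Rogers \cite{MR3903115} counterexamples for the Schrödinger maximal operator, placed at the time scale on which they are extremal, into the same scheme (as in \cite{Fan-Wang}); together these cover all $s<s(\delta)$. The main obstacle throughout is the deep input --- the Fan--Wang estimate, hence Du--Zhang's resolution of the Carleson problem; the genuinely new contribution is comparatively soft, namely the correct organisation around the $t$-dependence of $\gamma(x,t)$ (which forbids the change of variables $y=\gamma(x,t)$ under the supremum) and the correct time-threshold $2^{-k}$ --- at which the bilipschitz hypothesis in $x$ feeds Lee--Rogers in the large-$t$ range, and the Lipschitz hypothesis in $t$, through $|\gamma(x,t)-x|\lesssim t$, feeds the frequency-localisation in the small-$t$ range, exactly where those classical facts apply without loss.
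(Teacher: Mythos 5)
Your argument is correct and arrives at the same exponent $s(\delta)$, but it is organized differently from the paper's proof in the critical time range. The paper splits time into three ranges $(0,2^{-2k})$, $[2^{-2k},2^{-k})$, $[2^{-k},1)$: the first is handled by Taylor expansion and translation-domination, the third by the global curve maximal estimate, and the middle range by a dyadic decomposition $t\sim 2^{-j}$, $k\le j\le 2k$, into which it feeds the time-localized \emph{curve} maximal estimate $\left\|\sup_{t\in(0,2^{-j})}|U_\gamma f_k|\right\|_{L^2(B(0,1))}\lesssim_\varepsilon 2^{(2k-j)\frac{d}{2(d+1)}+\varepsilon k}\|f_k\|_2$ imported from Wang--Wang (Lemma \ref{proposition1}). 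You instead split only at $2^{-k}$ and, for $t\le 2^{-k}$, use the identity $U_\gamma f(x,t)=e^{it(-\Delta)}f(\gamma(x,t))$ together with the sub-wavelength bound $|\gamma(x,t)-x|\lesssim t\le 2^{-k}$ and Peetre-type domination of band-limited functions to transfer the entire small-time regime to the straight-line convergence-rate estimate of Fan--Wang; your tail/crude-bound treatment of the localization is the standard one and is sound. The two routes are essentially dual: your transfer step is the same mechanism by which the curve lemma is deduced from the time-localized straight-line estimate, so you have in effect re-derived that lemma and folded it into the time summation, taking the frequency-localized Fan--Wang bound $\left\|\sup_{0<t\le1}t^{-\delta}|e^{it(-\Delta)}f_k-f_k|\right\|_{L^2(2B)}\lesssim_\varepsilon 2^{k(s(\delta)+\varepsilon)}\|f_k\|_2$ as a black box where the paper takes the Wang--Wang lemma as one; what your version buys is a transparent explanation of why the Lipschitz-in-$t$ answer coincides with the straight-line answer. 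For the converse, choosing the identity curve and citing the known sharpness of the straight-line rate is legitimate and slightly simpler than the paper's explicit constructions along $\gamma(x,t)=x-t^{\alpha}e_1$ (Theorems \ref{counterex1} and \ref{counterex2}, which are the same Bourgain and wave-packet examples adapted to the shifted curve); the only point to tighten is the final passage from failure of the maximal inequality to failure of the a.e.\ statement, which is most cleanly done via Nikishin--Stein (Proposition \ref{steinmax}) rather than the hand-built lacunary superposition you sketch.
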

		
		When $d=1$, we can discuss the case where the curve is $\alpha$-Hölder continuous in \( t \). We have the following three theorems.
		
		\begin{theorem}\label{theorem2}
			Let $\delta \in[0,\alpha)$. For any curve $\gamma(x,t): \mathbb{R} \times[-1,1] \rightarrow \mathbb{R}$ satisfying $\gamma(x,0)=x$ which   is bilipschitz continuous in \( x \), and $\alpha$-Hölder continuous in \( t \) with \(  \frac{1}{2}\leq \alpha < 1\), we have
			\begin{equation}\label{pointwise convergence in r1}
				\lim _{t \rightarrow 0} \frac{U_{\gamma} f(x, t)-f(x)}{t^\delta}=0 \quad \text { a.e. } x \in \mathbb{R},\quad \forall f \in H^{s}\left(\mathbb{R}\right)
			\end{equation}
			whenever 
			\[
			s>s(\delta)=\begin{cases}
				\frac{1}{4} +\delta   \quad  &0  \leq\delta\leq \frac{1}{4}, \\ 
				2\delta   \quad   &  \delta>\frac{1}{4}.  \\
			\end{cases}
			\]
			Conversely, there exists a curve satisfying the previous assumption, while (\ref{pointwise convergence in r1}) fails whenever $s<s(\delta)$.
			
		\end{theorem}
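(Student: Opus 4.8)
The plan is to establish the positive assertion through a maximal-function estimate and the converse by a reduction to the straight-line case.

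\emph{Positive direction, set-up and small times.} Since $U_\gamma f(x,t)=\big(e^{it(-\Delta)}f\big)(\gamma(x,t))$, for Schwartz $f$ one has $|U_\gamma f(x,t)-f(x)|\le |e^{it(-\Delta)}f(\gamma(x,t))-f(\gamma(x,t))|+|f(\gamma(x,t))-f(x)|\lesssim_f t+t^\alpha\lesssim_f t^\alpha$, so $t^{-\delta}\big(U_\gamma f(x,t)-f(x)\big)\to 0$ everywhere because $\delta<\alpha$; by the usual density/maximal principle it therefore suffices to prove
\begin{equation}\label{maxest}
\Big\|\sup_{0<t\le1}t^{-\delta}\big|U_\gamma f(\cdot,t)-f\big|\Big\|_{L^2(B(0,1))}\lesssim \|f\|_{H^s(\mathbb R)},\qquad s>s(\delta),
\end{equation}
the general ball reducing to $B(0,1)$ by translation together with the bilipschitz bound on $\gamma$. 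I would decompose $f=\sum_{k\ge0}f_k$ with $\widehat{f_k}$ supported in $|\xi|\sim 2^k$ and bound the left side of \eqref{maxest} applied to $f_k$ by $2^{(s(\delta)+\varepsilon)k}\|f_k\|_2$, which is summable in $k$ by Cauchy--Schwarz. The threshold is the Schrödinger scale $t=2^{-2k}$. For $t\le 2^{-2k}$ the symbol $e^{it|\xi|^2}-1$ is $O(t2^{2k})=O(1)$ on $|\xi|\sim 2^k$, while $|\gamma(x,t)-x|\le C_3 t^\alpha\le C_3 2^{-2k\alpha}\le C_3 2^{-k}$: here $\alpha\ge\tfrac12$ guarantees the curve stays within a wavelength, so composition with $\gamma(\cdot,t)$ is harmless in $L^2$. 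The fundamental theorem of calculus in $t$ then gives $t^{-\delta}|U_\gamma f_k(x,t)-f_k(x)|\lesssim t^{-\delta}\big(t2^{2k}+t^\alpha2^k\big)\mathcal Mf_k(x)\lesssim 2^{2k\delta}\mathcal Mf_k(x)$ (again using $\alpha\ge\tfrac12$ to control the second term), with $\mathcal M$ a maximal operator bounded on $L^2$, so this regime contributes $\lesssim 2^{2k\delta}\|f_k\|_2\le 2^{s(\delta)k}\|f_k\|_2$.

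\emph{The large-time regime $2^{-2k}<t\le1$.} Here $t^{-\delta}|U_\gamma f_k(x,t)-f_k(x)|\le t^{-\delta}\big|e^{it(-\Delta)}f_k(\gamma(x,t))\big|+2^{2k\delta}|f_k(x)|$, and only the first term is problematic. I would control it by splitting the time interval dyadically, $t\sim 2^{-j}$ with $0\le j\le 2k$, and rescaling each block parabolically (time to a unit interval, frequency $2^k\mapsto 2^{k-j/2}$, space dilated by $2^{j/2}$). Under this rescaling the curve becomes $\tilde\gamma(y,\tau)=2^{j/2}\gamma(2^{-j/2}y,2^{-j}\tau)$, still bilipschitz in $y$ with the same constants and $\alpha$-Hölder in $\tau$ with constant $C_3 2^{j(1/2-\alpha)}\le C_3$ — the decisive second use of $\alpha\ge\tfrac12$, which keeps the rescaled curve in the admissible class with uniform constants and is exactly what fails for $\alpha<\tfrac12$. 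Each rescaled block is then governed by the sharp fixed-regularity Schrödinger maximal estimate along curves for $\tfrac12\le\alpha\le1$ (exponent $\tfrac14$ in $d=1$) of Cho--Lee--Vargas \cite{MR2970037}, together with a short-time refinement; undoing the scaling, the $j$-th block contributes (up to a factor $2^{\varepsilon k}$) at most $2^{j\delta}2^{k/4}\|f_k\|_2$ when $j\le k$ and at most $2^{j\delta}2^{(2k-j)/4}\|f_k\|_2$ when $k\le j\le 2k$ — the former being the Carleson bound, the latter a short-time gain of $(2^{2k}t)^{1/4}$ at $t=2^{-j}$. Summing the two geometric series yields exactly $2^{(1/4+\delta)k}\|f_k\|_2$ when $0\le\delta\le\tfrac14$ (the balance occurring at $t\sim 2^{-k}$) and $2^{2\delta k}\|f_k\|_2$ when $\delta>\tfrac14$ (dominated by $t\sim 2^{-2k}$); this is $2^{s(\delta)k}\|f_k\|_2$, and combined with the small-time regime and summed over $k$ it gives \eqref{maxest}.

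\emph{Converse.} It is enough to produce one admissible curve for which \eqref{pointwise convergence in r1} fails when $s<s(\delta)$, and I would take $\gamma(x,t)\equiv x$: this is bilipschitz in $x$ and, being constant in $t$, $\alpha$-Hölder in $t$ for every $\alpha\in[\tfrac12,1)$. Then \eqref{pointwise convergence in r1} reduces to the $\delta$-rate convergence $t^{-\delta}\big(e^{it(-\Delta)}f-f\big)\to0$ a.e.\ for the free evolution, whose sharpness is due to Wang--Zhao \cite{MR4877623}: a Dahlberg--Kenig-type frequency-annulus example, with the weight $t^{-\delta}$ evaluated at its focusing time, forces $s\ge\tfrac14+\delta$ for $0\le\delta\le\tfrac14$, while a single-frequency-bump example, for which $|e^{it(-\Delta)}f-f|\gtrsim1$ already at $t\sim 2^{-2k}$ on a set of measure $\sim1$, forces $s\ge 2\delta$ for $\delta>\tfrac14$; a lacunary superposition of these produces genuine a.e.\ divergence.

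\emph{Main obstacle.} The hard part is the weighted Schrödinger-along-curve maximal estimate of the second paragraph: one must follow the curve uniformly through the parabolic rescalings and, for times between $2^{-2k}$ and $1$ where the curve's displacement exceeds the wavelength, control the interplay between the supremum in $t$ and the composition with $\gamma(\cdot,t)$. That this can be done with the straight-line exponent $s(\delta)$ is precisely where $\alpha\ge\tfrac12$ is indispensable, and it is why the sharp threshold for $0<\alpha<\tfrac12$ is a genuinely different — and here, open — problem.
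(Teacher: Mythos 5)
Your sufficiency argument follows essentially the same scheme as the paper's: Littlewood--Paley decomposition, a split of time at the parabolic scale $2^{-2k}$ handled by Taylor expansion/mean value bounds (where $\alpha\ge\tfrac12$ enters exactly as you say), and a dyadic time decomposition on $[2^{-2k},1)$ in which the $j$-th block is controlled by a short-time maximal estimate with gain $2^{\frac14(2k-j)}\|f_k\|_2$ for $k\le j\le 2k$; your summation of the two regimes reproduces the paper's exponents $\tfrac14+\delta$ and $2\delta$ exactly. One caveat: the paper does not prove that short-time estimate either --- it imports it as Lemma 2.2 from the companion paper [Wang--Wang] --- and your proposed derivation by parabolic rescaling plus Cho--Lee--Vargas does not by itself produce it: rescaling time $2^{-j}$ to unit scale sends the frequency to $2^{k-j/2}$ but dilates the spatial ball to radius $2^{j/2}$, and summing the local maximal estimate over $\sim 2^{j/2}$ unit balls gives $2^{\frac18(2k-j)+\frac{j}{4}}\|f\|_2$, which is worse than both the Carleson bound at $j=k$ and the trivial bound at $j=2k$. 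So the ``short-time refinement'' you defer to is the entire content of the key lemma; you correctly identify it as the main obstacle, but it is a genuine input, not a corollary of the fixed-time result. Where you genuinely diverge from the paper is the converse: you take the admissible curve $\gamma(x,t)\equiv x$ and quote the sharp one-dimensional rate result of Wang--Zhao for $e^{it(-\Delta)}$, whereas the paper constructs explicit counterexamples along $\gamma(x,t)=x-t^\alpha$ (its Theorems 4.5 and 4.3) and runs the Nikishin--Stein reduction itself. Since the theorem only asserts the existence of one admissible curve for which \eqref{pointwise convergence in r1} fails when $s<s(\delta)$, your reduction is legitimate and shorter, at the cost of showing nothing about genuinely moving curves; the paper's construction has the advantage of exhibiting failure for a curve with nontrivial $t$-dependence at every admissible $\alpha$.
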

		
		\begin{theorem}\label{theorem3}
			Let $\delta \in[0,\alpha)$. For any curve $\gamma(x,t): \mathbb{R} \times[-1,1] \rightarrow \mathbb{R}$ satisfying $\gamma(x,0)=x$ which  is bilipschitz continuous in \( x \), and $\alpha$-Hölder continuous in \( t \) with \(  0 \leq \alpha \leq \frac{1}{4}  \), we have (\ref{pointwise convergence in r1}) whenever 
			\[
			s>s(\delta)=\begin{cases}
				\frac{1}{2}-\alpha +2\delta   \quad  &0  <\delta< \frac{\alpha}{2}, \\ 
				\frac{\delta}{\alpha}  \quad   &  \delta\geq\frac{\alpha}{2}.  \\
			\end{cases}
			\]
			Conversely, there exists a curve satisfying the previous assumption, while (\ref{pointwise convergence in r1}) fails whenever $s<s(\delta)$.
		\end{theorem}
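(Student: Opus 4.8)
The plan is to follow the standard reduction of a convergence-rate statement to an $L^2$ bound for a localized maximal operator, then to prove that maximal bound by interpolating between two extreme estimates, and finally to construct the sharpness example. For the positive direction, fix $\delta\in[0,\alpha)$ and write $U_\gamma f(x,t)-f(x)=\int\bigl(e^{i(\gamma(x,t)\cdot\xi+t|\xi|^2)}-e^{ix\cdot\xi}\bigr)\widehat f(\xi)\,d\xi$. As usual it suffices, by a Littlewood–Paley decomposition and the standard argument that converts an $L^2(B(0,1))$ bound with an $\varepsilon$-power gain on each dyadic piece into a.e. convergence, to show that for $\widehat f$ supported in $\{|\xi|\sim\lambda\}$,
\[
\Bigl\|\sup_{0<t<1}\frac{|U_\gamma f(\cdot,t)-f|}{t^\delta}\Bigr\|_{L^2(B(0,1))}\lesssim \lambda^{s(\delta)+\varepsilon}\|f\|_2 .
\]
Split the time range dyadically, $t\sim 2^{-j}$. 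On each such range the numerator phase $\gamma(x,t)\cdot\xi+t|\xi|^2-x\cdot\xi$ has two contributions: the dispersive term $t|\xi|^2\sim 2^{-j}\lambda^2$ and the curve displacement $(\gamma(x,t)-x)\cdot\xi$, whose size is controlled by $C_3 t^\alpha\lambda\sim 2^{-j\alpha}\lambda$ from the Hölder hypothesis. Using the factor $t^{-\delta}\sim 2^{j\delta}$ and the elementary bound $|e^{i\theta}-1|\lesssim\min(1,|\theta|)$, one extracts an $\varepsilon$-gain summable in $j$ whenever $2^{-j\alpha}\lambda\lesssim\lambda^{\delta/\alpha}$, i.e. for $j$ large; this is exactly the range where the curve term dominates and forces the exponent $\delta/\alpha$ when $\delta\ge\alpha/2$. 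For the remaining (bounded number of) scales $j$, where the dispersive term is not negligible, one invokes the sharp fixed-time/maximal Schrödinger estimate in $d=1$ — the $L^2$ bound with loss $\lambda^{1/4+\varepsilon}$, transferred to the curve $U_\gamma$ via the bilipschitz change of variables $x\mapsto\gamma(x,t)$ (Jacobian comparable to $1$) as in Lee–Rogers and Cho–Lee–Vargas — which after accounting for the $t^{-\delta}$ factor on a time interval of length $\sim 2^{-j}$ with $2^{-j\alpha}\lambda\gtrsim\lambda^{\delta/\alpha}$ produces the exponent $\tfrac12-\alpha+2\delta$. Balancing the two regimes at $\delta=\alpha/2$ gives the stated $s(\delta)$; the hypothesis $\delta<\alpha$ guarantees the geometric series in $j$ converges and that $s(\delta)<\tfrac12$, so no regularity beyond the trivial Sobolev embedding is wasted.

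The main obstacle is the bookkeeping in the intermediate regime: one must estimate, uniformly over the dyadic time blocks on which neither the dispersive term nor the curve term is clearly dominant, a maximal operator whose symbol genuinely mixes the two phases. The clean way is to freeze $t\sim 2^{-j}$, use that on this block $|\gamma(x,t)-\gamma(x,t')|\lesssim 2^{-j\alpha}$ so the $t$-maximal function over a block costs only a Sobolev factor of size $(2^{-j\alpha}\lambda)^{1/2}$ (local smoothing / Sobolev embedding in $t$), combine it with the single-time bound $\|e^{it(-\Delta)}f\|_{L^2(B(0,1))}\lesssim\lambda^{1/4}\|f\|_2$ after the change of variables, and then sum $2^{j\delta}\cdot(2^{-j\alpha}\lambda)^{1/2}\cdot\lambda^{1/4}$ over the relevant $j$; the worst $j$ is the smallest one with $2^{-j\alpha}\lambda\sim\lambda^{\delta/\alpha}$, yielding $\lambda^{\delta/\alpha\cdot 1/2}\lambda^{1/4}\cdot\lambda^{?}$ — one checks this equals $\lambda^{1/2-\alpha+2\delta}$ precisely when $0<\delta<\alpha/2$. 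Care is needed that all implied constants depend only on $C_1,C_2,C_3$ and not on the particular curve.

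For the converse (sharpness), the plan is to adapt the Dahlberg–Kenig / Cho–Lee–Vargas construction. Take $\widehat f$ a sum of bumps on annuli $|\xi|\sim\lambda_k$ with $\lambda_k\to\infty$ and amplitudes chosen so that $\|f\|_{H^s}<\infty$ exactly when $s<s(\delta)$, and design a curve $\gamma(x,t)$ that is bilipschitz in $x$, $\alpha$-Hölder in $t$, and is engineered so that at the special time $t=t_k\sim\lambda_k^{-1/\alpha}$ the curve-displacement phase $(\gamma(x,t_k)-x)\cdot\xi$ cancels (or aligns with) the Schrödinger phase $t_k|\xi|^2$ on a positive-measure set of $x$, making $|U_\gamma f(x,t_k)-f(x)|$ comparable to the full amplitude there. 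Dividing by $t_k^\delta\sim\lambda_k^{-\delta/\alpha}$ and summing the Borel–Cantelli-type contributions shows the ratio does not tend to zero a.e. as soon as $s<\delta/\alpha$ (the dominant constraint when $\delta\ge\alpha/2$); for $0<\delta<\alpha/2$ one instead superimposes the classical $d=1$ Schrödinger counterexample of Dahlberg–Kenig at scale $\lambda_k$, which at time $t_k\sim\lambda_k^{-2\alpha}$ (chosen so the curve term is still within its Hölder budget) forces the loss $\tfrac12-\alpha+2\delta$. Combining the two families of obstructions produces a single curve and a single $f\in H^s$ for every $s<s(\delta)$ violating \eqref{pointwise convergence in r1}. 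The delicate point here is to verify the $\alpha$-Hölder bound for the constructed $\gamma$ with the correct constant while retaining the bilipschitz property, which is why the construction must keep the oscillatory perturbation of $x$ both small in $t^\alpha$-norm and large enough in the phase.
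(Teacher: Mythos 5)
Your overall architecture (Littlewood--Paley decomposition, dyadic decomposition in time, different estimates in different time regimes, counterexamples built at the special scales $t\sim\lambda^{-1/\alpha}$ and $t\sim\lambda^{-2\alpha}$) matches the paper's, and your converse sketch picks the same critical time scales as the paper's Theorems \ref{counterex3} and \ref{counterex5}. But the positive direction contains two concrete errors. First, you claim that for the scales where the dispersive term is not negligible one can invoke the maximal Schr\"odinger bound with loss $\lambda^{1/4+\varepsilon}$, ``transferred to $U_\gamma$ via the bilipschitz change of variables $x\mapsto\gamma(x,t)$ as in Lee--Rogers.'' That transference requires Lipschitz (i.e.\ $\alpha=1$) regularity in $t$; for an $\alpha$-H\"older curve with $\alpha\le\frac14$ the maximal operator over $t\in(0,1)$ does \emph{not} satisfy a $\lambda^{1/4}$ bound --- the sharp loss is $\lambda^{\frac12-\alpha}\ge\lambda^{1/4}$ (Cho--Lee--Vargas), and the paper's own counterexample (Theorem \ref{counterex3} with $\delta=0$) shows $s\ge\frac12-\alpha$ is necessary. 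This $\lambda^{\frac12-\alpha}$ is exactly where the $\frac12-\alpha$ in $s(\delta)$ comes from in the regime $t\gtrsim 2^{-2k}$; your $\lambda^{1/4}$ input would give the wrong (too small) exponent and is simply not a valid estimate here.

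Second, in the intermediate regime your proposed combination $2^{j\delta}\cdot(2^{-j\alpha}\lambda)^{1/2}\cdot\lambda^{1/4}$ carries a spurious factor $\lambda^{1/4}$: for $j\ge 2k$ the dispersive phase $t|\xi|^2\lesssim 2^{2k-j}$ is already $O(1)$, so no fixed-time Schr\"odinger estimate should enter at all, and the correct block bound is $\|\sup_{t\in(0,2^{-j})}|U_\gamma f_k|\|_{L^2}\lesssim 2^{\frac12(k-\alpha j)}\|f_k\|_2$ (the paper's Lemma \ref{proposition3}, which is the genuinely nontrivial input quoted from another work and which you assert rather than prove). You also misidentify the worst dyadic scale: in $\sum_{2k\le j\le k/\alpha}2^{j(\delta-\alpha/2)+k/2}$ the extremal $j$ for $\delta<\alpha/2$ is $j=2k$ (giving $\lambda^{\frac12-\alpha+2\delta}$), not ``the smallest $j$ with $2^{-j\alpha}\lambda\sim\lambda^{\delta/\alpha}$''; plugging your choice into your own product does not yield $\lambda^{\frac12-\alpha+2\delta}$ (try $\alpha=\frac18$, $\delta=0$), and the ``$\lambda^{?}$'' left in your computation is never resolved. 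Until the maximal lemma for the range $2k\le j\le k/\alpha$ is actually established and the bookkeeping redone with the correct inputs, the sufficiency argument does not close.
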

		
		\begin{theorem}\label{theorem4}
			Let $\delta \in[0,\alpha)$. For any curve $\gamma(x,t): \mathbb{R} \times[-1,1] \rightarrow \mathbb{R}$ satisfying $\gamma(x,0)=x$ which is bilipschitz continuous in \( x \), and $\alpha$-Hölder continuous in \( t \) with \(  \frac{1}{4} < \alpha < \frac{1}{2}  \), we have (\ref{pointwise convergence in r1}) whenever
			\[
			s>s(\delta)=\begin{cases}
				\frac{1}{4} +\delta   \quad  &0 \leq \delta\leq \alpha-\frac{1}{4}, \\ 
				\frac{1}{2}-\alpha +2\delta   \quad  &\alpha-\frac{1}{4}  <\delta<\frac{\alpha}{2}, \\ 
				\frac{\delta}{\alpha}   \quad   &  \delta\geq \frac{\alpha}{2}.  \\
			\end{cases}
			\]
			Conversely, there exists a curve satisfying the previous assumption, while (\ref{pointwise convergence in r1}) fails whenever $s<s(\delta)$. \end{theorem}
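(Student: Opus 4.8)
The plan is to reduce (\ref{pointwise convergence in r1}) to a local maximal estimate and then to establish that estimate by a Littlewood--Paley decomposition in which the three mechanisms behind the three branches of $s(\delta)$ are separated. Since (\ref{pointwise convergence in r1}) is trivial for Schwartz $f$ — there $|U_\gamma f(x,t)-f(x)|\lesssim t^{\alpha}$, because $|e^{i((\gamma(x,t)-x)\xi+t|\xi|^2)}-1|\lesssim t^{\alpha}|\xi|+t|\xi|^{2}$ and $\delta<\alpha$ — the usual maximal-principle/density argument reduces matters to showing, with $B=B(0,1)$,
\[
\Big\|\sup_{0<t<1}\frac{|U_\gamma f(x,t)-f(x)|}{t^{\delta}}\Big\|_{L^{2}(B)}\lesssim\|f\|_{H^{s}(\mathbb R)},\qquad s>s(\delta).
\]
Using $U_\gamma f(x,t)=(e^{it(-\Delta)}f)(\gamma(x,t))$ together with $|\gamma(x,t)-x|\le C t^{\alpha}$ (from $\gamma(x,0)=x$ and the H\"older hypothesis), I would write
\[
U_\gamma f(x,t)-f(x)=\underbrace{(e^{it(-\Delta)}f)(\gamma(x,t))-f(\gamma(x,t))}_{=:\mathrm I(x,t)}+\underbrace{f(\gamma(x,t))-f(x)}_{=:\mathrm{II}(x,t)},
\]
and dominate $\sup_{t}t^{-\delta}|\mathrm I(x,t)|$ pointwise by the tangential maximal operator $\mathcal T f(x)=\sup_{0<t<1}\sup_{|y-x|\le C t^{\alpha}}t^{-\delta}\big|e^{it(-\Delta)}f(y)-f(y)\big|$, and $\sup_{t}t^{-\delta}|\mathrm{II}(x,t)|$ by the H\"older maximal operator $\mathcal H f(x)=\sup_{0<\rho<C}\rho^{-\delta/\alpha}\sup_{|h|\le\rho}|f(x+h)-f(x)|$ (substitute $\rho=C t^{\alpha}$).

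The term $\mathrm{II}$ is responsible for the branch $\delta/\alpha$. Decomposing $f=\sum_{k}f_{k}$ with $\widehat{f_{k}}$ supported in $|\xi|\sim2^{k}$, the bound $|f_{k}(x+h)-f_{k}(x)|\lesssim\min(2^{k}|h|,1)\,\mathcal M f_{k}(x)$ — where the frequency localization is used to pass from a raw supremum over a small ball to the Hardy--Littlewood maximal function $\mathcal M$ — gives $\mathcal H f_{k}(x)\lesssim2^{k\delta/\alpha}\mathcal M f_{k}(x)$, hence $\|\mathcal H f_{k}\|_{L^{2}(B)}\lesssim2^{k\delta/\alpha}\|f_{k}\|_{2}$, and summing over $k$ yields $\|\mathcal H f\|_{L^{2}(B)}\lesssim\|f\|_{H^{s}}$ for all $s>\delta/\alpha$. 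The term $\mathrm I$ is responsible for the branches $1/4+\delta$ and $1/2-\alpha+2\delta$, and here I would combine the methods behind Theorems \ref{theorem2} and \ref{theorem3}. Fixing $f_{k}$ and splitting $0<t<1$ dyadically, the smallest scales $t\lesssim2^{-2k}$ are handled by Duhamel's formula $e^{it(-\Delta)}f_{k}-f_{k}=i\int_{0}^{t}e^{is(-\Delta)}(-\Delta)f_{k}\,ds$, which converts $t^{-\delta}$ into $t^{1-\delta}$ and leaves a supremum over a very short time window that an elementary energy estimate bounds by $2^{2k}\|f_{k}\|_{2}$; this contributes $\lesssim2^{2k\delta}\|f_{k}\|_{2}$, which turns out always to be dominated. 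On the remaining scales one is left with a weighted tangential maximal operator for $e^{it(-\Delta)}f_{k}$: the Schr\"odinger-oscillation part of the analysis (as in Theorem \ref{theorem2}) produces the exponent $1/4+\delta$, while the part where the curve's tangential displacement $t^{\alpha}$ overwhelms the dispersive scale (as in Theorem \ref{theorem3}) produces $1/2-\alpha+2\delta$, the two being glued along $\delta=\alpha-1/4$. This gives $\|\mathcal T f_{k}\|_{L^{2}(B)}\lesssim2^{k\max\{1/4+\delta,\,1/2-\alpha+2\delta\}}\|f_{k}\|_{2}$; summing over $k$ and combining with the bound for $\mathrm{II}$ yields precisely $s>s(\delta)$.

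For the necessity part I would construct, in each of the three ranges of $\delta$, an admissible curve together with an $f\in\bigcap_{s<s(\delta)}H^{s}(\mathbb R)$ for which the quotient in (\ref{pointwise convergence in r1}) does not tend to $0$ on a set of positive measure. When $\delta\ge\alpha/2$ the obstruction is purely the spatial irregularity of $f$: take $\gamma(x,t)=x+|t|^{\alpha}$ and a Weierstrass-type function $f(x)=\psi(x)\sum_{k}2^{-k\delta/\alpha}e^{i2^{k}x}$, which lies in $H^{s}$ precisely for $s<\delta/\alpha$ and satisfies $t^{-\delta}|f(\gamma(x,t))-f(x)|\gtrsim1$ along $t\sim2^{-k/\alpha}$. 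In the ranges giving $1/4+\delta$ and $1/2-\alpha+2\delta$ I would adapt, respectively, the Dahlberg--Kenig and the Bourgain / Luc\`a--Rogers type counterexamples for the Schr\"odinger maximal operator, normalized by $t^{\delta}$ and paired with a curve that tracks either the extremal time scale $t\sim|\xi|^{-2}$ or the extremal displacement $|\gamma(x,t)-x|\sim t^{\alpha}$; the three families are designed to match continuously at $\delta=\alpha-1/4$ and $\delta=\alpha/2$.

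I expect the crux to be the intermediate branch $\alpha-1/4<\delta<\alpha/2$ with exponent $1/2-\alpha+2\delta$. Unlike in Theorems \ref{theorem2} and \ref{theorem3}, where a single mechanism dominates throughout, here the sharp behaviour of $\mathrm I$ emerges only from a genuine balance — across dyadic time scales — among the tangential displacement $t^{\alpha}$, the Schr\"odinger dispersive scale, and the weight $t^{-\delta}$; both the matching upper bound for $\mathrm I$ (obtaining $1/2-\alpha+2\delta$ rather than the cruder exponent one gets by treating the time scales separately) and the construction of a sharp counterexample are distinctly more delicate than in the two extreme cases, and checking that this time--frequency optimization is indeed sharp is the heart of the argument.
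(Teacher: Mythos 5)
Your overall architecture (reduction to a weighted local maximal estimate, Littlewood--Paley decomposition, dyadic decomposition in time, separate treatment of the displacement term $f(\gamma(x,t))-f(x)$) is the same as the paper's, but two steps have genuine gaps. The quantitative heart of the sufficiency proof is a time-localized maximal estimate for frequency-localized data: for $\hat f$ supported in $\{|\xi|\sim 2^k\}$ and $J=(0,2^{-j})$ with $k\le j\le k/\alpha$ one needs
\[
\Bigl\|\sup_{t\in J}|U_\gamma f|\Bigr\|_{L^2([-1,1])}\lesssim
\begin{cases}
2^{\frac14(2k-j)}\|f\|_2, & k\le j\le 4\alpha k,\\
2^{(\frac12-\alpha)k}\|f\|_2, & 4\alpha k<j\le 2k,\\
2^{\frac12(k-\alpha j)}\|f\|_2, & 2k<j\le k/\alpha,
\end{cases}
\]
which is the paper's Lemma \ref{proposition4}, imported from \cite{Wang-Wang}; multiplying by $2^{j\delta}$ and summing over $j$ is precisely what produces the three branches and the gluing points $\delta=\alpha-\tfrac14$ and $\delta=\tfrac{\alpha}{2}$. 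You assert the resulting bound $2^{k\max\{1/4+\delta,\,1/2-\alpha+2\delta\}}\|f_k\|_2$ for your operator $\mathcal T$ by ``combining the methods behind Theorems \ref{theorem2} and \ref{theorem3}'' and you yourself flag this as the crux; but the decisive middle regime $4\alpha k<j\le 2k$, which creates the branch $\tfrac12-\alpha+2\delta$, has no counterpart in either of those theorems, so without proving or citing the displayed estimate the argument is not complete.

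Second, your treatment of $\mathrm{II}$ is quantitatively wrong. The pointwise bound $|f_k(x+h)-f_k(x)|\lesssim\min(2^k|h|,1)\,\mathcal Mf_k(x)$ fails for $|h|\gg 2^{-k}$: one only gets $|f_k(x+h)|\lesssim(1+2^k|h|)^{N}\mathcal Mf_k(x)$, and in $L^2$ the sharp bound is $\|\sup_{|h|\le\rho}|f_k(\cdot+h)|\|_{L^2(B)}\sim(2^k\rho)^{1/2}\|f_k\|_2$ for $\rho\ge 2^{-k}$ (test on a single wave packet). Hence for $\delta<\alpha/2$ the supremum in $\mathcal H$ over $\rho\sim1$ (i.e.\ $t\sim1$) forces $\|\mathcal Hf_k\|_{L^2(B)}\gtrsim 2^{k/2}\|f_k\|_2$, which exceeds both $2^{k(1/4+\delta)}$ and $2^{k(1/2-\alpha+2\delta)}$, so your global splitting $U_\gamma f-f=\mathrm I+\mathrm{II}$ cannot close in the first two branches. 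The paper avoids this by comparing $U_\gamma f_k$ with $f_k(\gamma(x,t))$ only for $t<2^{-k/\alpha}$, where the displacement $|\gamma(x,t)-x|\lesssim t^\alpha\le 2^{-k}$ is at most one wavelength, and on larger time scales estimating $U_\gamma f_k$ and $f_k$ separately without displacing $f_k$. Your necessity sketch is directionally consistent with the paper, which uses three explicit focusing examples (Theorems \ref{counterex4}, \ref{counterex3}, \ref{counterex5}) together with Theorem \ref{counterex6}, but it too is only a sketch.
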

		
		The following four graphs depict the relationship between the sharp convergence rate $\delta$ and the regularity $s$ of the Schrödinger operator along curves in four different scenarios: lipschitz continuous in \( t \) for $d\geq 1$, $\alpha$-Hölder continuous in \( t \) with \(  \frac{1}{2}\leq \alpha < 1\) for $d=1$, $\alpha$-Hölder continuous in \( t \) with \(  0 \leq \alpha \leq \frac{1}{4}  \)  for $d=1$ and $\alpha$-Hölder continuous in \( t \) with \(  \frac{1}{4} < \alpha < \frac{1}{2}  \) for $d=1$. More precise, if the pair $(s,\delta)$  is located in the area to the right of the line, endpoint not included, we can get	 (\ref{pointwise convergence in rn}).
		\begin{center}
					
			\tikzset{every picture/.style={line width=0.75pt}} %set default line width to 0.75pt        
			
			\begin{tikzpicture}[x=0.8pt,y=0.8pt,yscale=-1,xscale=1]
				%uncomment if require: \path (0,654); %set diagram left start at 0, and has height of 654
				
				%Straight Lines [id:da5164142171437176] 
				\draw    (40,270) -- (254.42,268.56) ;
				\draw [shift={(256.42,268.55)}, rotate = 179.62] [color={rgb, 255:red, 0; green, 0; blue, 0 }  ][line width=0.75]    (10.93,-3.29) .. controls (6.95,-1.4) and (3.31,-0.3) .. (0,0) .. controls (3.31,0.3) and (6.95,1.4) .. (10.93,3.29)   ;
				%Straight Lines [id:da511330753208063] 
				\draw    (40,270) -- (39.01,136) ;
				\draw [shift={(39,134)}, rotate = 89.58] [color={rgb, 255:red, 0; green, 0; blue, 0 }  ][line width=0.75]    (10.93,-3.29) .. controls (6.95,-1.4) and (3.31,-0.3) .. (0,0) .. controls (3.31,0.3) and (6.95,1.4) .. (10.93,3.29)   ;
				%Straight Lines [id:da56524840217122] 
				\draw  [dash pattern={on 0.84pt off 2.51pt}]  (40,270) -- (148.05,195.91) ;
				%Straight Lines [id:da4702017705335557] 
				\draw    (148.05,195.91) -- (199.03,161.75) ;
				%Straight Lines [id:da7114786121753125] 
				\draw    (148.05,195.91) -- (102.33,270.17) ;
				%Straight Lines [id:da06672207540038999] 
				\draw  [dash pattern={on 0.84pt off 2.51pt}]  (39.33,196.33) -- (148.05,195.91) ;
				%Straight Lines [id:da4027190267141739] 
				\draw  [dash pattern={on 0.84pt off 2.51pt}]  (148.05,195.91) -- (149.71,268.98) ;
				%Straight Lines [id:da018797808850821274] 
				\draw  [dash pattern={on 0.84pt off 2.51pt}]  (39,162) -- (199.03,161.75) ;
				%Straight Lines [id:da3116857652496736] 
				\draw    (199.03,161.75) -- (247.7,161.56) ;
				%Straight Lines [id:da482396004978338] 
				\draw    (302,269) -- (516.42,267.56) ;
				\draw [shift={(518.42,267.55)}, rotate = 179.62] [color={rgb, 255:red, 0; green, 0; blue, 0 }  ][line width=0.75]    (10.93,-3.29) .. controls (6.95,-1.4) and (3.31,-0.3) .. (0,0) .. controls (3.31,0.3) and (6.95,1.4) .. (10.93,3.29)   ;
				%Straight Lines [id:da44285623062448953] 
				\draw    (302,269) -- (301.01,131) ;
				\draw [shift={(301,129)}, rotate = 89.59] [color={rgb, 255:red, 0; green, 0; blue, 0 }  ][line width=0.75]    (10.93,-3.29) .. controls (6.95,-1.4) and (3.31,-0.3) .. (0,0) .. controls (3.31,0.3) and (6.95,1.4) .. (10.93,3.29)   ;
				%Straight Lines [id:da9293240933905764] 
				\draw  [dash pattern={on 0.84pt off 2.51pt}]  (302,269) -- (410.05,194.91) ;
				%Straight Lines [id:da311985269784694] 
				\draw    (410.05,194.91) -- (461.03,160.75) ;
				%Straight Lines [id:da9037104057985718] 
				\draw    (410.05,194.91) -- (364.33,269.17) ;
				%Straight Lines [id:da1977888533348915] 
				\draw  [dash pattern={on 0.84pt off 2.51pt}]  (301.33,195.33) -- (410.05,194.91) ;
				%Straight Lines [id:da5737150632001176] 
				\draw  [dash pattern={on 0.84pt off 2.51pt}]  (410.05,194.91) -- (411.71,267.98) ;
				%Straight Lines [id:da7126267313671931] 
				\draw  [dash pattern={on 0.84pt off 2.51pt}]  (301,161) -- (461.03,160.75) ;
				%Straight Lines [id:da5622902047607131] 
				\draw    (461.03,160.75) -- (509.7,160.56) ;
				%Straight Lines [id:da9895412910634549] 
				\draw    (39.14,398.61) -- (526.39,396.72) ;
				\draw [shift={(528.39,396.72)}, rotate = 179.78] [color={rgb, 255:red, 0; green, 0; blue, 0 }  ][line width=0.75]    (10.93,-3.29) .. controls (6.95,-1.4) and (3.31,-0.3) .. (0,0) .. controls (3.31,0.3) and (6.95,1.4) .. (10.93,3.29)   ;
				%Straight Lines [id:da5751928843118218] 
				\draw    (39.14,398.61) -- (40,303.59) ;
				\draw [shift={(40.02,301.59)}, rotate = 90.52] [color={rgb, 255:red, 0; green, 0; blue, 0 }  ][line width=0.75]    (10.93,-3.29) .. controls (6.95,-1.4) and (3.31,-0.3) .. (0,0) .. controls (3.31,0.3) and (6.95,1.4) .. (10.93,3.29)   ;
				%Straight Lines [id:da5970633378608546] 
				\draw  [dash pattern={on 0.84pt off 2.51pt}]  (39.14,398.61) -- (287.55,346.28) ;
				%Straight Lines [id:da5488693679538559] 
				\draw    (287.55,346.28) -- (405.8,327.02) ;
				%Straight Lines [id:da6637775064125446] 
				\draw    (287.55,346.28) -- (180.05,398.83) ;
				%Straight Lines [id:da5899010368424825] 
				\draw  [dash pattern={on 0.84pt off 2.51pt}]  (41.78,346.84) -- (287.55,346.28) ;
				%Straight Lines [id:da724056932704896] 
				\draw  [dash pattern={on 0.84pt off 2.51pt}]  (287.55,346.28) -- (287.14,397.28) ;
				%Straight Lines [id:da23532050642213298] 
				\draw  [dash pattern={on 0.84pt off 2.51pt}]  (44.04,327.35) -- (405.8,327.02) ;
				%Straight Lines [id:da23311794314826484] 
				\draw    (405.8,327.02) -- (515.83,326.77) ;
				%Straight Lines [id:da31683225401065207] 
				\draw    (38.72,533.61) -- (525.96,531.72) ;
				\draw [shift={(527.96,531.72)}, rotate = 179.78] [color={rgb, 255:red, 0; green, 0; blue, 0 }  ][line width=0.75]    (10.93,-3.29) .. controls (6.95,-1.4) and (3.31,-0.3) .. (0,0) .. controls (3.31,0.3) and (6.95,1.4) .. (10.93,3.29)   ;
				%Straight Lines [id:da7596036801968917] 
				\draw    (38.72,533.61) -- (39.57,438.59) ;
				\draw [shift={(39.59,436.59)}, rotate = 90.52] [color={rgb, 255:red, 0; green, 0; blue, 0 }  ][line width=0.75]    (10.93,-3.29) .. controls (6.95,-1.4) and (3.31,-0.3) .. (0,0) .. controls (3.31,0.3) and (6.95,1.4) .. (10.93,3.29)   ;
				%Straight Lines [id:da5283202671213753] 
				\draw  [dash pattern={on 0.84pt off 2.51pt}]  (38.72,533.61) -- (287.12,481.28) ;
				%Straight Lines [id:da7987717673923328] 
				\draw    (287.12,481.28) -- (405.38,462.02) ;
				%Straight Lines [id:da9168329346120849] 
				\draw    (287.12,481.28) -- (240.34,504.71) ;
				%Straight Lines [id:da41980220558021764] 
				\draw  [dash pattern={on 0.84pt off 2.51pt}]  (41.35,481.84) -- (287.12,481.28) ;
				%Straight Lines [id:da4060225609689305] 
				\draw  [dash pattern={on 0.84pt off 2.51pt}]  (287.12,481.28) -- (286.72,532.28) ;
				%Straight Lines [id:da06617453505078552] 
				\draw  [dash pattern={on 0.84pt off 2.51pt}]  (43.61,462.35) -- (405.38,462.02) ;
				%Straight Lines [id:da3187163482355796] 
				\draw    (405.38,462.02) -- (515.4,461.77) ;
				%Straight Lines [id:da5676707103886124] 
				\draw    (240.34,504.71) -- (217.34,533.71) ;
				%Straight Lines [id:da7207281565464411] 
				\draw  [dash pattern={on 0.84pt off 2.51pt}]  (38.94,504.74) -- (240.34,504.71) ;
				%Straight Lines [id:da3554427109331685] 
				\draw  [dash pattern={on 0.84pt off 2.51pt}]  (240.34,504.71) -- (241.06,532.97) ;
				%Straight Lines [id:da8085029589261996] 
				\draw  [dash pattern={on 0.84pt off 2.51pt}]  (199.03,161.75) -- (199.43,269.19) ;
				%Straight Lines [id:da9010423183399867] 
				\draw  [dash pattern={on 0.84pt off 2.51pt}]  (461.03,160.75) -- (461.43,268.19) ;
				%Straight Lines [id:da4353064464912333] 
				\draw  [dash pattern={on 0.84pt off 2.51pt}]  (405.8,327.02) -- (406.09,396.95) ;
				%Straight Lines [id:da5112515328981064] 
				\draw  [dash pattern={on 0.84pt off 2.51pt}]  (405.38,462.02) -- (405.66,531.95) ;
				
				% Text Node
				\draw (25,128) node [anchor=north west][inner sep=0.75pt]   [align=left] {$\delta$};
				% Text Node
				\draw (125,128) node [anchor=north west][inner sep=0.75pt]   [align=left] {$C_1$};
				% Text Node
				\draw (355,128) node [anchor=north west][inner sep=0.75pt]   [align=left] {$C_\alpha\quad \alpha \in[\frac{1}{2},1)$};
				% Text Node
				\draw (215,298) node [anchor=north west][inner sep=0.75pt]   [align=left] {$C_\alpha\quad \alpha \in(0,\frac{1}{4}]$};
				% Text Node
				\draw (215,433) node [anchor=north west][inner sep=0.75pt]   [align=left] {$C_\alpha\quad \alpha \in(\frac{1}{4},\frac{1}{2})$};
				% Text Node
				\draw (240.77,271.17) node [anchor=north west][inner sep=0.75pt]   [align=left] {$s$};
				% Text Node
				\draw (75,272) node [anchor=north west][inner sep=0.75pt]   [align=left] {$\frac{d}{2(d+1)}$};
				% Text Node
				\draw (130,272) node [anchor=north west][inner sep=0.75pt]   [align=left] {$\frac{d}{d+1}$};
				% Text Node
				\draw (-5,186) node [anchor=north west][inner sep=0.75pt]   [align=left] {$\frac{d}{2(d+1)}$};
				% Text Node
				\draw (288,123) node [anchor=north west][inner sep=0.75pt]   [align=left] {$\delta$};
				% Text Node
				\draw (502.77,270.17) node [anchor=north west][inner sep=0.75pt]   [align=left] {$s$};
				% Text Node
				\draw (360,271) node [anchor=north west][inner sep=0.75pt]   [align=left] {$\frac{1}{4}$};
				% Text Node
				\draw (407,271) node [anchor=north west][inner sep=0.75pt]   [align=left] {$\frac{1}{2}$};
				% Text Node
				\draw (288,185) node [anchor=north west][inner sep=0.75pt]   [align=left] {$\frac{1}{4}$};
				% Text Node
				\draw (24,339.91) node [anchor=north west][inner sep=0.75pt]   [align=left] {$\frac{\alpha}{2}$};
				% Text Node
				\draw (499.93,402.73) node [anchor=north west][inner sep=0.75pt]   [align=left] {$s$};
				% Text Node
				\draw (177.19,403.8) node [anchor=north west][inner sep=0.75pt]   [align=left] {$\frac{1}{2}-\alpha$};
				% Text Node
				\draw (283.44,403.8) node [anchor=north west][inner sep=0.75pt]   [align=left] {$\frac{1}{2}$};
				% Text Node
				\draw (23,315.84) node [anchor=north west][inner sep=0.75pt]   [align=left] {$\alpha$};
				% Text Node
				\draw (24,473.91) node [anchor=north west][inner sep=0.75pt]   [align=left] {$\frac{\alpha}{2}$};
				% Text Node
				\draw (-3,495) node [anchor=north west][inner sep=0.75pt]   [align=left] {$\alpha-\frac{1}{4}$};
				% Text Node
				\draw (499.5,537.73) node [anchor=north west][inner sep=0.75pt]   [align=left] {$s$};
				% Text Node
				\draw (212.76,535) node [anchor=north west][inner sep=0.75pt]   [align=left] {$\frac{1}{4}$};
				% Text Node
				\draw (283.01,535) node [anchor=north west][inner sep=0.75pt]   [align=left] {$\frac{1}{2}$};
				% Text Node
				\draw (23,451.84) node [anchor=north west][inner sep=0.75pt]   [align=left] {$\alpha$};
				% Text Node
				\draw (24,154) node [anchor=north west][inner sep=0.75pt]   [align=left] {$1$};
				% Text Node
				\draw (287,154) node [anchor=north west][inner sep=0.75pt]   [align=left] {$\alpha$};
				% Text Node
				\draw (194,272) node [anchor=north west][inner sep=0.75pt]   [align=left] {2};
				% Text Node
				\draw (456,271) node [anchor=north west][inner sep=0.75pt]   [align=left] {$2\alpha$};
				% Text Node
				\draw (23,297) node [anchor=north west][inner sep=0.75pt]   [align=left] {$\delta$};
				% Text Node
				\draw (401,402) node [anchor=north west][inner sep=0.75pt]   [align=left] {${1}$};
				% Text Node
				\draw (400,535) node [anchor=north west][inner sep=0.75pt]   [align=left] {${1}$};
				% Text Node
				\draw (24,434) node [anchor=north west][inner sep=0.75pt]   [align=left] {$\delta$};
				% Text Node
				\draw (235,538) node [anchor=north west][inner sep=0.75pt]   [align=left] {$\alpha$};

			\end{tikzpicture}
			
		\end{center}
		
		\subsection{Fractional cases}
		
		~\\
		
		We also consider the convergence rate of fractional Schrödinger operator along curves, that is, determining the optimal $s$ for \( f \in H^{s}\left(\mathbb{R}\right) \) to ensure 
		\[
		\lim _{t \rightarrow 0} \frac{U_{\gamma}^m f(x,t)-f(x)}{t^\delta}=0 \quad \text { a.e. } x \in \mathbb{R}.
		\]
		Here
		\[
		U_{\gamma}^m f(x,t):=\frac{1}{2\pi}\int_{ \mathbb{R}}  e^{i\left(\gamma(x,t) \cdot \xi+t|\xi|^{m}\right)} \widehat{f}(\xi) d \xi
		\]
		with $m>0$. The fractional Schrödinger operator
		\[
		e^{i t (-\Delta)^{\frac{m}{2}}} f(x)=\frac{1}{(2\pi)^{d}}\int_{ \mathbb{R}^{d}} e^{i\left(x \cdot \xi+t|\xi|^{m}\right)} \widehat{f}(\xi) d \xi
		\]
		also leads to find optimal $s$ for \( f \in H^{s}\left(\mathbb{R}^{d}\right) \) to get $\underset{t \rightarrow 0}{\lim} ~e^{i t (-\Delta)^{\frac{m}{2}}} f(x)=f(x)$ almost everywhere. In $d=1$, Sjölin \cite{MR0904948} and Walther \cite{MR1347033} showed that  $s\ge \frac{1}{4}$ when $m>1$ and $s> \frac{m}{4}$ when $0<m<1$ are sharp results. In higher dimension, Cowling \cite{MR0729347} and Zhang \cite{MR3247302}  pointed the convergence holds for $s>\{\frac{md}{4},\frac{m}{2}\}$ when $0<m<1$. Miao--Yang--Zheng \cite{MR3476484} and Cho--Ko \cite{MR4367790} obtained some results when $m>1$. The convergence problem of fractional Schrödinger operator along curves has also been studied. In $d=1$, Cho--Shiraki \cite{MR4307013} and Yuan-Zhao \cite{MR4359958} pointed $s>\max\{ \frac{1}{4},\frac{1-m\alpha}{2}\}$ when $m>1$ and $s>\max\{ \frac{2-m}{4},\frac{1-m\alpha}{2}\}$ when $0<m<1$ are sharp results.
		
		Since the proof of the convergence rate of fractional Schrödinger operator along curves is similar to the case $m=2$, we will only present results in  \textbf{\ref{chap5}} while omit the proof.\\
		
		\textbf{Outline.} In the following parts, we will prove the sufficiency in \textbf{\ref{chap3}} and necessity in \textbf{\ref{chap4}}. The convergence rate of fractional Schrödinger operator along curves is presented in \textbf{\ref{chap5}}.\\
		
		\textbf{Notation.} Throughout this article, we write \( A \gg B \) to mean if there is a large constant \( G \), which does not depend on \( A \) and \( B \) , such that \( A \geq G B \). We write $A \lesssim B$  to mean that there exists a constant $C$ such that $A \leq C B$. We write $A \gtrsim B$  to mean that there exists a constant $C$ such that $A \geq C B$. We write \( A \sim B \), and mean that \( A \) and \( B \) are comparable.  We write  supp \( \hat{f} \subset\left\{\xi:|\xi| \sim 2^k \right\} \) to mean  supp \( \hat{f} \subset\left\{\xi: \frac{2^k}{2}\le |\xi| \le 2^{k+1} \right\} \) and we will always assume $k \gg1$. We write $C_{X}$ to denote a constant that depends on $X$, where $X$ is a variable. We use $e_{1}$ to represent $(1,0, \ldots, 0)$. We write $A \lesssim_{\varepsilon} B$ to mean, there exists a constant $C_{\varepsilon}$ such that $A \leq C_{\varepsilon} B$.

		\section{Upper bounds for maximal functions}\label{chap3}
		\numberwithin{equation}{section}
		
		\textbf{Proof of Theorem 1.1}. We rewrite an important estimate from \cite{Wang-Wang} as a lemma.

		\begin{lemma}\label{proposition1}
			Let $J=(0,2^{-j})$ with $k\le j\le 2k$ is an interval. Suppose the curve $\gamma: \mathbb{R}^{d} \times[-1,1] \rightarrow \mathbb{R}^{d} $ satisfying $\gamma(x,0)=x$   is bilipschitz continuous in \( x \) and lipschitz continuous in \( t \). For any \( \varepsilon>0 \), we have
			\begin{equation}\label{3.2}
				\left\|\sup _{t \in J }\left|U_{\gamma} f\right|\right\|_{L^{2}(B(0,1))} \lesssim_{\varepsilon} 2^{\left(2k-j\right)\frac{d}{2(d+1)}+\varepsilon k}\|f\|_{L^{2}\left(\mathbb{R}^{d}\right)} 
			\end{equation}
			for all \( f \) with supp \( \hat{f} \subset\left\{\xi:|\xi|\sim 2^{k} \right\} \). 
		\end{lemma}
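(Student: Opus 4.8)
The plan is to deduce (\ref{3.2}) from the sharp frequency-localized Schrödinger maximal estimate of Du--Zhang \cite{MR3961084} --- in its version for non-tangential curves supplied by the equivalence of Lee--Rogers \cite{MR2871144} --- after a parabolic rescaling that renormalizes the short interval $J=(0,2^{-j})$ to unit length.

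First I would rescale. Put $\mu=2^{-j/2}$, $k'=k-\tfrac{j}{2}$, and set $\tilde f(\tilde x)=f(\mu\tilde x)$, $\tilde\gamma(\tilde x,\tilde t)=\mu^{-1}\gamma(\mu\tilde x,\mu^{2}\tilde t)$. Then $\widehat{\tilde f}$ is supported in $\{|\xi|\sim 2^{k'}\}$, $\|\tilde f\|_{2}=\mu^{-d/2}\|f\|_{2}$, and $\tilde\gamma$ satisfies $\tilde\gamma(\tilde x,0)=\tilde x$, is bilipschitz in $\tilde x$ with the same constants $C_{1},C_{2}$, and is Lipschitz in $\tilde t$ with constant $C_{3}\mu\le C_{3}$; a change of variables in the defining oscillatory integral gives $U_{\gamma}f(\mu\tilde x,\mu^{2}\tilde t)=U_{\tilde\gamma}\tilde f(\tilde x,\tilde t)$, whence
\begin{equation*}
\Big\|\sup_{t\in J}|U_{\gamma}f|\Big\|_{L^{2}(B(0,1))}=\mu^{d/2}\Big\|\sup_{\tilde t\in(0,1)}|U_{\tilde\gamma}\tilde f|\Big\|_{L^{2}(B(0,2^{k-k'}))} .
\end{equation*}
The hypothesis $k\le j\le 2k$ is used precisely here: it gives $0\le k'\le k/2$ and $2^{k-k'}\ge 2^{k'}$, so the rescaled ball is at least as large as the propagation scale $2^{k'}$ of the rescaled frequency. (When $k'=0$, i.e. $j=2k$, the phase $t|\xi|^{2}$ is $O(1)$ on the frequency support and a crude Sobolev-in-$t$ bound already gives the left side $\lesssim\|f\|_{2}$, which is (\ref{3.2}); so assume $k'\ge 1$.)

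Next I would tile $B(0,2^{k-k'})$ by finitely overlapping balls $B(z_{\nu},2^{k'})$, take a subordinate partition of unity $\chi_{\nu}$ adapted to the doubled balls, and write $\tilde f=\sum_{\nu}\chi_{\nu}\tilde f$. For $\tilde x\in B(z_{\nu},2^{k'})$, $\tilde t\in(0,1)$, the pieces $\chi_{\nu'}\tilde f$ with $\mathrm{dist}(z_{\nu},z_{\nu'})\gg 2^{k'}$ are negligible: the frequency-$2^{k'}$ evolution has kernel concentrated within $O(2^{k'})$ of $\tilde x$ over unit time, and the curve displaces $\tilde x$ by at most $C_{3}\mu\ll 2^{k'}$, so integration by parts in $\xi$ produces rapid decay. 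Hence only $O(1)$ pieces are active on each ball, and by almost orthogonality
\begin{equation*}
\Big\|\sup_{\tilde t\in(0,1)}|U_{\tilde\gamma}\tilde f|\Big\|_{L^{2}(B(0,2^{k-k'}))}^{2}\lesssim\sum_{\nu}\Big\|\sup_{\tilde t\in(0,1)}|U_{\tilde\gamma}(\chi_{\nu}\tilde f)|\Big\|_{L^{2}(B(z_{\nu},2^{k'}))}^{2} .
\end{equation*}
Rescaling $B(z_{\nu},2^{k'})$ to the unit ball by $\tilde x=z_{\nu}+2^{k'}w$ turns each summand into the Schrödinger maximal norm along a curve at frequency $2^{2k'}$ over $B(0,1)$, the curve being again bilipschitz in $w$, Lipschitz in the new time variable, and the identity at time $0$, with the supremum now over $(0,2^{-2k'})\subset(0,1)$ and all Jacobians cancelling. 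The frequency-localized maximal estimate for non-tangential curves --- Du--Zhang \cite{MR3961084} together with the Lee--Rogers reduction \cite{MR2871144}, applied at frequency $2^{2k'}$ --- bounds each summand by $\lesssim_{\varepsilon}\big(2^{2k'}\big)^{\frac{d}{d+1}+2\varepsilon}\|\chi_{\nu}\tilde f\|_{2}^{2}$. Summing in $\nu$ with $\sum_{\nu}\|\chi_{\nu}\tilde f\|_{2}^{2}\lesssim\|\tilde f\|_{2}^{2}$, taking square roots, and undoing the first rescaling gives
\begin{equation*}
\Big\|\sup_{t\in J}|U_{\gamma}f|\Big\|_{L^{2}(B(0,1))}\lesssim_{\varepsilon}2^{k'\frac{d}{d+1}+2\varepsilon k'}\|f\|_{2}=2^{(2k-j)\frac{d}{2(d+1)}+\varepsilon(2k-j)}\|f\|_{2},
\end{equation*}
and since $2k-j\le k$ we absorb $\varepsilon(2k-j)$ into $\varepsilon k$ after renaming $\varepsilon$, obtaining (\ref{3.2}).

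The bookkeeping of the two rescalings is routine; the real point --- and the main obstacle --- is the legitimacy of the last step: that the bilipschitz-in-$x$, Lipschitz-in-$t$ hypothesis reduces the curved Schrödinger maximal function to the flat one without degrading the Du--Zhang exponent $\frac{d}{2(d+1)}$. If one prefers not to quote Lee--Rogers, one instead observes that after the rescalings the curve deviates from the identity by only $O(2^{-k})$ and so contributes an $O(1)$ oscillatory phase error; but absorbing such a perturbation into the $L^{2}$ maximal bound (as opposed to a fixed-time bound) still requires the square-function and almost-orthogonality apparatus behind Du--Zhang, and that is where the genuine work lies.
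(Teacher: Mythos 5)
The paper does not actually prove Lemma \ref{proposition1}: it is imported verbatim from the companion paper \cite{Wang-Wang} (the proof environment that follows it in the text is the proof of Theorem \ref{theorem1}, which merely \emph{uses} the lemma). So there is no in-paper argument to compare yours against; what you have written is an independent reconstruction. As such it is sound and, in my view, the natural route: the exponent $2^{(2k-j)\frac{d}{2(d+1)}}$ is exactly what one gets by parabolically renormalizing $J=(0,2^{-j})$ to unit length (frequency drops to $2^{k-j/2}$), localizing to balls of radius equal to the new propagation scale, and then invoking the sharp frequency-localized maximal estimate at frequency $2^{2k'}=2^{2k-j}$; your bookkeeping, including the cancellation of Jacobians and the absorption of $\varepsilon(2k-j)$ into $\varepsilon k$, checks out, as does the verification that both rescalings preserve the bilipschitz-in-$x$ and Lipschitz-in-$t$ hypotheses (the Lipschitz constants only improve, using $k\le j\le 2k$). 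Two points deserve a remark. First, the physical-space cutoffs $\chi_\nu$ destroy the exact frequency support of $\tilde f$; one should either choose $\chi_\nu$ with compactly supported, rapidly decaying Fourier transform and re-project onto the annulus with a negligible error, or (more standard in this literature) keep $\tilde f$ intact and use the rapid kernel decay to replace $\tilde f$ by $\chi_{B(z_\nu,C2^{k'})}\tilde f$ up to an $O(2^{-Nk})$ tail --- routine, but it should be said. Second, the step you correctly identify as the crux --- that the Du--Zhang exponent survives the passage from the vertical direction to a bilipschitz, Lipschitz-in-$t$ curve at the level of the $L^2$ maximal function --- is not something you need to re-derive: it is precisely the content of the results of Minguill\'on \cite{MR4795804} and Cao--Miao \cite{MR4564456} quoted in the introduction (sharp $s>\frac{d}{2(d+1)}$ along such curves), so citing those rather than reconstructing the Lee--Rogers perturbation argument closes the one genuine dependency in your proof.
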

		
		\begin{proof}
			By a standard argument, we need to prove that the corresponding maximum estimate holds, that is for $0\leq \delta<1$, and arbitrary $\varepsilon>0$, we have
			\begin{equation}\label{standardest}
				\left\|\sup _{0<t<1} \frac{\left|U_\gamma f(x,t)-f(x)\right|}{t^{\delta}}\right\|_{L^{2}\left(B(0,1)\right)} \lesssim_{\varepsilon}\|f\|_{H^{s(\delta)+3\varepsilon}\left(\mathbb{R}^{d}\right)}.
			\end{equation}
			We can decompose \( f \) as
			\[
			f=\sum_{k=0}^{\infty} f_{k}
			\]
			where \( \operatorname{supp} \hat{f}_{0} \subset B(0,1), \operatorname{supp} \hat{f}_{k} \subset\left\{\xi:|\xi| \sim 2^{k}\right\}, k \geq 1 \). 
			By triangle inequality, it is obvious
			\[
			\left\|\sup _{0<t<1} \frac{\left|U_\gamma f(x,t)-f(x)\right|}{t^{\delta}}\right\|_{L^{2}\left(B(0,1)\right)} 
			\leq \sum_{k=0}^{\infty}\left\|\sup _{0<t<1} \frac{\left|U_\gamma f_k(x,t)-f_k(x)\right|}{t^{\delta}}\right\|_{L^{2}\left(B(0,1)\right)}.
			\]
			Using mean value theorem we can notice that
			\begin{equation}\label{klesssim1}
				\begin{aligned}
					\frac{\left|U_\gamma f_k(x,t)-f_k(\gamma(x,t))\right|}{t^{\delta }} &=\frac{1}{\left(2\pi \right)^{d}}\frac{\displaystyle \left| \int_{\mathbb{R}^{d}}e^{i\gamma(x,t)\cdot \xi }\left(e^{it|\xi|^2}-1\right)\hat{f_k}(\xi) d \xi\right|}{t^\delta}\\
					&\leq\frac{t^{1- \delta}}{\left(2\pi \right)^{d}}  \int_{\mathbb{R}^{d}}|\xi|^2|\hat{f_k}(\xi)| d \xi
				\end{aligned}
			\end{equation}
			and
			\begin{equation}\label{klesssim2}
				\begin{aligned}
					\frac{|f_k(\gamma(x, t))-f_k(x)|}{t^{\delta}} &=\frac{1}{\left(2\pi \right)^{d}}\frac{\displaystyle \left| \int_{\mathbb{R}^{d}}\left(e^{i\left(\gamma(x,t)-x\right)\cdot \xi }-1\right)e^{ix\cdot \xi}\hat{f_k}(\xi) d \xi\right|}{t^\delta}\\
					&\leq \frac{|\gamma(x, t)-x|}{t^{\delta}\left(2\pi \right)^{d}}\int_{\mathbb{R}^{d}}|\xi \|| \hat{f_k}(\xi)|d \xi \\
					&\leq \frac{t^{1- \delta}}{\left(2\pi \right)^{d}} \int_{\mathbb{R}^{d}}| \xi|| \hat{f_k}(\xi)| d \xi .
				\end{aligned}
			\end{equation}
			Thus for \( k \lesssim 1 \), one can get
			\begin{equation}\label{klesssim3}
				\begin{aligned}
					&\left\|\sup _{0<t<1} \frac{\left|U_\gamma f_k(x,t)-f_k(x)\right|}{t^{\delta}}\right\|_{L^{2}\left(B(0,1)\right)} \\
					&\leq\left\|\sup _{0<t<1} \frac{\left|U_\gamma f_k(x,t)-f_{k}(\gamma(x, t))\right|}{t^{\delta}}\right\|_{L^{2}\left(B(0,1)\right)} +\left\|\sup _{0<t<1} \frac{\left|f_{k}(\gamma(x, t))-f_{k}(x)\right|}{t^{\delta}}\right\|_{L^{2}\left(B(0,1)\right)} \\
					&\leq\left\|    \frac{1}{\left(2\pi \right)^{d}}  \int_{\mathbb{R}^{d}}|\xi|^2|\hat{f_k}(\xi)| d \xi    \right\|_{L^{2}\left(B(0,1)\right)} +\left\|\frac{1}{\left(2\pi \right)^{d}} \int_{\mathbb{R}^{d}}| \xi|| \hat{f_k}(\xi)| d \xi \right\|_{L^{2}\left(B(0,1)\right)} \\
					&\lesssim\|f\|_{H^{s(\delta)+3\varepsilon}\left(\mathbb{R}^{d}\right)}.
				\end{aligned}
			\end{equation}
			For \( k \gg 1 \), we will divide $t\in(0,1)$ into three parts, that is
			\begin{equation}\label{tdivision1}
				\begin{aligned}
					&\left\|\sup _{0<t<1} \frac{\left|U_\gamma f_k(x,t)-f_k(x)\right|}{t^{\delta}}\right\|_{L^{2}\left(B(0,1)\right)} \le \left\|\sup _{0<t<2^{-2k}} \frac{\left|U_\gamma f_k(x,t)-f_k(x)\right|}{t^{\delta}}\right\|_{L^{2}\left(B(0,1)\right)}+\\
					&\left\|\sup _{2^{-2k}\leq t< 2^{-k}} \frac{\left|U_\gamma f_k(x,t)-f_k(x)\right|}{t^{\delta}}\right\|_{L^{2}\left(B(0,1)\right)}+\left\|\sup _{2^{-k}\leq t<1} \frac{\left|U_\gamma f_k(x,t)-f_k(x)\right|}{t^{\delta}}\right\|_{L^{2}\left(B(0,1)\right)}\\
					&:=I+I I +I I I.
				\end{aligned}
			\end{equation}
			We first consider the first and third term estimates, which are relatively simple. For the first term, we have
			\begin{equation}\label{Iest}
				I \leq \left\|\sup _{0<t<2^{-2k}} \frac{\left|U_\gamma f_k(x,t)-f_k(\gamma(x, t))\right|}{t^{\delta}}\right\|_{L^{2}\left(B(0,1)\right)}+\left\|\sup _{0<t<2^{-2k}} \frac{\left|f_{k}(\gamma(x, t))-f_{k}(x)\right|}{t^{ \delta}}\right\|_{L^{2}\left(B(0,1)\right)}.
			\end{equation}
			We can estimate these two terms separately. Firstly Taylor's formula and the method of Lemma 5.1 in \cite{MR4297035} show that
			\begin{align*}
				&\left\|\sup _{0<t<2^{-2k}} \frac{\left|U_\gamma f_k(x,t)-f_k(\gamma(x, t))\right|}{t^{\delta}}\right\|_{L^{2}\left(B(0,1)\right)} \\
				&\leq \sum_{j=1}^{\infty} \frac{2^{-{2kj}+2{\delta k}}}{(2\pi)^d j !}\left\|\sup _{0<t<2^{-2k}}\left|\int_{\mathbb{R}^{d}} e^{i \gamma(x, t) \cdot \xi} |\xi|^{2j} \hat{f}_{k}(\xi) d \xi\right|\right\|_{L^{2}\left(B(0,1)\right)} \\
				&\leq \sum_{j=1}^{\infty} \frac{2^{-2{kj}+2{\delta k}}}{(2\pi)^d j !}\sum_{\mathfrak{l} \in \mathbb{Z}^{d}} \frac{C_d}{(1+|\mathfrak{l}|)^{d+1}}\left\|\int_{\mathbb{R}^{d}} e^{i\left(x+\frac{\mathfrak{l}}{2^{k}}\right) \cdot \xi} |\xi|^{2j} \hat{f}_{k}(\xi) d \xi\right\|_{L^{2}\left(B(0,1)\right)} \\
				&\leq \sum_{j=1}^{\infty}\frac{2^{-2{kj}+2{\delta k}}}{(2\pi)^dj !}\sum_{\mathfrak{l} \in \mathbb{Z}^d} \frac{C_d}{(1+|\mathfrak{l}|)^{d+1}}\left\||\xi|^{2j} \hat{f}_{k}(\xi)\right\|_{L^{2}\left(\mathbb{R}^{d}\right)} \\
				&\lesssim \sum_{j=1}^{\infty} \frac{2^{-2{kj}+2{\delta k}} 2^{2 k j}}{(2\pi)^d j !}\left\|\hat{f}_{k}(\xi)\right\|_{L^{2}\left(\mathbb{R}^{d}\right)} \\
				&\lesssim 2^{2{\delta k}}\|{f}_{k}\|_{L^{2}\left(\mathbb{R}^{d}\right)}.
			\end{align*}
			For the next term, we can handle it via Taylor's formula only, that is
			\begin{align*}
				&\left\|\sup _{0<t<2^{-2k}} \frac{\left|f_{k}(\gamma(x, t))-f_{k}(x)\right|}{t^{ \delta}}\right\|_{L^{2}\left(B(0,1)\right)} \\
				&\leq \sum_{j \geq 1} \frac{1}{(2\pi)^d j !} \sum_{h_1,h_2,…,h_j \in\left\{1,2,…, d\right\} }\left\|\sup_{0<t<2^{-2k}} \frac{\prod \limits_{d=1}^{j}\left|\gamma_{h_d}(x, t)-x_{h_d}\right|}{t^{ \delta}}\left|\int_{\mathbb{R}} e^{i x \cdot \xi}  \prod \limits_{d=1}^{j} \xi_{h_d} \hat{f}_{k}(\xi) d \xi\right|\right\|_{L^{2}\left(B(0,1)\right)} \\
				&\leq \sum_{j \geq 1} \frac{2^{-2k j+2k\delta}}{(2\pi)^d j !} \sum_{h_1,h_2,…,h_j \in\left\{1,2,…, d\right\} }\left\|\int_{\mathbb{R}} e^{i x \cdot \xi}   \prod \limits_{d=1}^{j} \xi_{h_d} \hat{f}_{k}(\xi) d \xi\right\|_{L^{2}\left(B(0,1)\right)} \\
				&\lesssim \sum_{j \geq 1} \frac{2^{-2k j+2k\delta} 2^{k j} }{(2\pi)^d j !}\left\|\hat{f}_{k}\right\|_{L^{2}\left(\mathbb{R}^{d}\right)} \\
				&\lesssim 2^{2k\delta}\left\|{f}_{k}\right\|_{L^{2}\left(\mathbb{R}^{d}\right)},
			\end{align*}
			where $\gamma(x,t)=\left(\gamma_1(x,t),\gamma_2(x,t),…,\gamma_d(x,t)\right)$. As for the third term, it is obvious that
			\begin{equation}\label{divison3}
				\begin{aligned}
					\left\|\sup _{2^{-k}\leq t<1} \frac{\left|U_\gamma f_k(x,t)-f_k(x)\right|}{t^{\delta}}\right\|_{L^{2}\left(B(0,1)\right)}&\leq 2^{k\delta}\left\|\sup _{2^{-k}\leq t<1} \left|U_\gamma f_k(x,t)\right|\right\|_{L^{2}\left(B(0,1)\right)}+2^{k\delta}\|{f}_{k}\|_{L^{2}\left(\mathbb{R}^{d}\right)}\\
					&\leq 2^{k\delta}\left\|\sup _{t\in(0,1)} \left|U_\gamma f_k(x,t)\right|\right\|_{L^{2}\left(B(0,1)\right)}+2^{k\delta}\|{f}_{k}\|_{L^{2}\left(\mathbb{R}^{d}\right)}\\
					&\lesssim 2^{k\left(\delta+\frac{d}{2(d+1)}\right)}\|{f}_{k}\|_{L^{2}\left(\mathbb{R}^{d}\right)}.
				\end{aligned}
			\end{equation}
			Next we will consider $t\in (2^{-2k},2^{-k})$, dyadic decomposition of time shows that
			\begin{align*}
				&\left\|\sup _{2^{-2k}\leq t< 2^{-k}} \frac{\left|U_\gamma f_k(x,t)-f_k(x)\right|}{t^{\delta}}\right\|_{L^{2}\left(B(0,1)\right)} \\
				&\leq  \left\|\sup _{2^{-2k}\leq t< 2^{-k}} \frac{\left|U_\gamma f_k(x,t)\right|}{t^{\delta}}\right\|_{L^{2}\left(B(0,1)\right)} +\left\|\sup _{2^{-2k}\leq t< 2^{-k}} \frac{\left|f_k(x)\right|}{t^{\delta}}\right\|_{L^{2}\left(B(0,1)\right)}\\ 
				&\leq \left\|\sup _{k\leq j \leq2k} \sup _{2^{-j-1}\leq t< 2^{-j}} \frac{\left|U_\gamma f_k(x,t)\right|}{t^{\delta}}\right\|_{L^{2}\left(B(0,1)\right)}+ 2^{2{\delta k}}\|{f}_{k}\|_{L^{2}\left(\mathbb{R}^{d}\right)}\\
				&\leq 2\sum _{k\leq j \leq 2k}2^{j\delta} \left\| \sup _{2^{-j-1}\leq t< 2^{-j}} \left|U_\gamma f_k(x,t)\right|\right\|_{L^{2}\left(B(0,1)\right)}+ 2^{2{\delta k}}\|{f}_{k}\|_{L^{2}\left(\mathbb{R}^{d}\right)}\\
				&\lesssim  \sum _{k\leq j \leq 2k}2^{j\delta} \left\| \sup _{t\in (0,2^{-j})} \left|U_\gamma f_k(x,t)\right|\right\|_{L^{2}\left(B(0,1)\right)}
				+2^{2{\delta k}}\|{f}_{k}\|_{L^{2}\left(\mathbb{R}^{d}\right)}.
			\end{align*}
			Lemma \ref{proposition1} shows that
			\begin{align*}
				I I&\lesssim_{\varepsilon }    \left( \sum _{k\leq j \leq 2k}2^{j\left(\delta-\frac{d}{2(d+1)}\right)+\left(\frac{d}{d+1}+\varepsilon        \right)k}+2^{2{\delta k}}\right)\|{f}_{k}\|_{L^{2}\left(\mathbb{R}^{d}\right)}\\
				&\lesssim_{\varepsilon}
				\begin{cases}
					k2^{k\left(\delta+\frac{d}{2(d+1)}+\varepsilon \right)}\|{f}_{k}\|_{L^{2}\left(\mathbb{R}^{d}\right)}&\delta \in\left[0,\frac{d}{2(d+1)}\right),\\
					k2^{k\left(2{\delta}+\varepsilon\right)}\|{f}_{k}\|_{L^{2}\left(\mathbb{R}^{d}\right)}&\delta \in\left[\frac{d}{2(d+1)},1 \right).
				\end{cases}
			\end{align*}
			Notice that  $k \lesssim_{\varepsilon} 2^{\varepsilon   k }$ for $k\gg1$. Hence $I+I I+I I I$ can be estimated by
			\[
			\left\|\sup _{0<t<1} \frac{\left|U_\gamma f_k(x,t)-f_k(x)\right|}{t^{\delta}}\right\|_{L^{2}\left(B(0,1)\right)} \lesssim_{\varepsilon}
			\begin{cases}
				2^{k\left(\delta+\frac{d}{2(d+1)}+2\varepsilon\right)}\|{f}_{k}\|_{L^{2}\left(\mathbb{R}^{d}\right)}&\delta \in\left[0,\frac{d}{2(d+1)}\right),\\
				2^{k\left(2{\delta}+2\varepsilon\right)}\|{f}_{k}\|_{L^{2}\left(\mathbb{R}^{d}\right)}&\delta \in\left[\frac{d}{2(d+1)},1 \right).
			\end{cases}
			\]
			We can choose  $s_1=s(\delta)+3\varepsilon$, then we have
			\begin{align*}
				&\left\|\sup _{0<t<1} \frac{\left|U_\gamma f(x,t)-f(x)\right|}{t^{\delta}}\right\|_{L^{2}\left(B(0,1)\right)} \\
				&\leq \sum_{k\lesssim 1}\left\|\sup _{0<t<1} \frac{\left|U_\gamma f_k(x,t)-f_k(x)\right|}{t^{\delta}}\right\|_{L^{2}\left(B(0,1)\right)}+
				\sum_{k\gg 1}\left\|\sup _{0<t<1} \frac{\left|U_\gamma f_k(x,t)-f_k(x)\right|}{t^{\delta}}\right\|_{L^{2}\left(B(0,1)\right)} \\
				&\lesssim_{\varepsilon} \|f\|_{H^{s_{1}}}+\sum_{k=1}^{\infty}2^{k(s(\delta)+2\varepsilon)}\|f_k\|_2\\
				&\lesssim_{\varepsilon} \|f\|_{H^{s_1}},
			\end{align*}
			which completes the proof of (\ref{standardest}).
			
		\end{proof}

		\textbf{Proof of Theorem \ref{theorem2}}. We rewrite an important estimate from \cite{Wang-Wang} as a lemma.
		
		\begin{lemma}\label{proposition2}
			Let $J=(0,2^{-j})$ with $k\le j\le 2k$ is an interval. For any curve $\gamma(x,t): \mathbb{R} \times[-1,1] \rightarrow \mathbb{R}$ satisfying $\gamma(x,0)=x$ which is bilipschitz continuous in \( x \), and $\alpha$-Hölder continuous in \( t \) with \(  \frac{1}{2}\leq \alpha <1\), we have
			\begin{equation}\label{0.5-1max}
				\left\|\sup _{t \in J}\left|U_\gamma f(\cdot, t)\right|\right\|_{L^{2}([-1,1])}\lesssim 2^{\frac{1}{4}\left(2k-j\right)} \|f\|_{2}   \end{equation}
			for all \( f \) with supp \( \hat{f} \subset\left\{\xi:|\xi|\sim 2^k \right\} \). 
		\end{lemma}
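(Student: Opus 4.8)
The plan is to establish (\ref{0.5-1max}) by the linearization--$TT^{*}$ method that underlies the one--dimensional Carleson estimate, tracking carefully where the hypothesis $\alpha\ge\tfrac12$ is used. First I would linearize the supremum: pick a measurable $t\colon[-1,1]\to J$ so that $|U_\gamma f(x,t(x))|$ realizes $\sup_{t\in J}|U_\gamma f(x,t)|$ for a.e.\ $x$, and set $Tf(x)=U_\gamma f(x,t(x))$. It then suffices to prove $\|Tf\|_{L^{2}([-1,1])}\lesssim 2^{\frac14(2k-j)}\|f\|_{2}$ uniformly in $t(\cdot)$, for $f$ with $\operatorname{supp}\widehat f\subset\{|\xi|\sim 2^{k}\}$. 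Next I would invoke $TT^{*}$: $\|T\|^{2}=\|TT^{*}\|$, and $TT^{*}$ has integral kernel
\[
K(x,y)=\int_{|\xi|\sim 2^{k}}e^{i\left(a\xi+b\xi^{2}\right)}\,d\xi,\qquad a=\gamma(x,t(x))-\gamma(y,t(y)),\quad b=t(x)-t(y),
\]
with $|b|\lesssim 2^{-j}$. Since $|K(x,y)|=|K(y,x)|$, Schur's test reduces everything to the single bound
\[
\sup_{x}\int_{[-1,1]}|K(x,y)|\,dy\ \lesssim\ 2^{\frac12(2k-j)}.
\]

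For the pointwise size of $K$ I would use stationary phase for the quadratic phase $\psi(\xi)=a\xi+b\xi^{2}$, whose only critical point is $\xi_{\ast}=-a/(2b)$ and whose second derivative is the constant $2b$. This produces three regimes: the trivial bound $|K|\lesssim 2^{k}$; van der Corput's second--derivative estimate $|K|\lesssim|b|^{-1/2}$, valid once $|b|\gtrsim 2^{-2k}$; and, when $\xi_{\ast}$ is separated from the shell $\{|\xi|\sim 2^{k}\}$ (equivalently $|a|\not\sim 2^{k}|b|$), the non-stationary bound $|K|\lesssim_{N}2^{k}\bigl(1+2^{k}\max(|a|,2^{k}|b|)\bigr)^{-N}$, obtained by repeated integration by parts (no boundary terms, since $\psi'$ is monotone on the shell).

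Then I would decompose the $y$--integral dyadically in $|b|\sim 2^{-m}$, $m\ge j$, and inside each slice split into the resonant part $|a|\sim 2^{k}|b|\sim 2^{k-m}$ and its complement. On the complement the non-stationary bound forces the contribution of each slice to be $O(1)$, hence $O(2k-j)$ after summing over $j\le m\le 2k$ (and $O(1)$ for $m>2k$). On the resonant part the metric hypotheses give $\tfrac1{C_{1}}|x-y|\le |a|+C_{3}|b|^{\alpha}$, so that $|x-y|\lesssim 2^{k-m}+2^{-m\alpha}$; this is exactly where $\alpha\ge\tfrac12$ enters, since it guarantees $2^{-m\alpha}\lesssim 2^{k-m}$ for every $m\le 2k$, whence $|x-y|\lesssim 2^{k-m}$ and the resonant slice has measure $\lesssim 2^{k-m}$. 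With $|K|\lesssim\min(2^{k},|b|^{-1/2})=2^{m/2}$ on that slice, the resonant part contributes $\sum_{j\le m\le 2k}2^{k-m}\cdot 2^{m/2}\sim 2^{k-j/2}=2^{\frac12(2k-j)}$, and the tail $m>2k$ (where $|b|<2^{-2k}$ already forces $|x-y|\lesssim 2^{-k}$ on the relevant set) contributes $O(1)$. Altogether $\sup_{x}\int|K(x,y)|\,dy\lesssim 2^{\frac12(2k-j)}+(2k-j)\lesssim 2^{\frac12(2k-j)}$, which yields (\ref{0.5-1max}).

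The main obstacle, and the only genuinely delicate point, is the resonant region: because $\gamma$ is merely continuous (bilipschitz) in $x$ and Hölder in $t$, one cannot differentiate it or run a stationary-phase expansion of the full operator $U_\gamma$, so the measure estimate $|x-y|\lesssim 2^{k-m}$ on the resonant slice has to be squeezed out of the bilipschitz and Hölder inequalities alone. This is precisely the step that collapses once $\alpha<\tfrac12$, where $2^{-m\alpha}$ can dominate $2^{k-m}$ for $m$ all the way up to $2k$, the resonant slice is no longer that thin, and one is pushed into the lossier analysis underlying Theorems~\ref{theorem3} and~\ref{theorem4}. (Alternatively, one could parabolically rescale $(x,t)\mapsto(2^{-k}y,2^{-2k}s)$, under which $J$ becomes $(0,2^{2k-j})$ and the rescaled curve $\Gamma(y,s)=2^{k}\gamma(2^{-k}y,2^{-2k}s)$ stays bilipschitz in $y$ and $\alpha$-Hölder in $s$ with constant $C_{3}2^{k(1-2\alpha)}\le C_{3}$ exactly because $\alpha\ge\tfrac12$, reducing (\ref{0.5-1max}) to the critical-scale curved Carleson estimate of \cite{Wang-Wang}; I would use whichever packaging is cleaner.)
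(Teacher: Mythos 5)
Your proposal proves the lemma itself, which the paper never does: Lemma~\ref{proposition2} is imported verbatim from \cite{Wang-Wang}, and the proof environment that follows it in the paper is actually the derivation of the convergence-rate maximal estimate for Theorem~\ref{theorem2} \emph{from} the lemma (the $I+II+III$ decomposition in $t$ and the summation over $k\le j\le 2k$), not a proof of (\ref{0.5-1max}). So there is no line-by-line comparison to be made; the closest match to what the citation presumably contains is your closing parenthetical, since parabolic rescaling to the critical scale --- under which the H\"older constant becomes $C_3 2^{k(1-2\alpha)}\le C_3$ precisely because $\alpha\ge\tfrac12$ --- is the natural reduction of (\ref{0.5-1max}) to the $j=2k$ estimate of \cite{Wang-Wang}.

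Taken on its own terms, your $TT^{*}$ argument is the standard route and looks sound: linearization, Schur's test with the antisymmetric phase, the three kernel bounds, the dyadic split in $|b|\sim 2^{-m}$, and the key observation that $\tfrac1{C_1}|x-y|\le |a|+C_3|b|^{\alpha}$ yields the measure bound $2^{k-m}$ for the resonant slice exactly when $\alpha\ge\tfrac12$ and $m\le 2k$. Two points should be tightened. First, the tail $|b|\le 2^{-2k}$ must be treated as a single block rather than as infinitely many dyadic slices each contributing $O(1)$ (that sum diverges as written); there $e^{ib\xi^{2}}$ can be absorbed into the amplitude, since its $\xi$-derivative is $O(2^{-k})$ on the shell, giving $|K|\lesssim_N 2^{k}\left(1+2^{k}|a|\right)^{-N}$ and hence an $O(1)$ total for the whole block. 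Second, in the non-resonant slices you still need to split off the near-diagonal set $|x-y|\lesssim 2^{-m\alpha}$, where $|a|$ cannot be bounded below by $|x-y|$ and the non-stationary decay is unavailable in the $|a|$ variable; there one uses $|K|\lesssim 2^{m/2}$ and measure $2^{-m\alpha}$, which at $\alpha=\tfrac12$ is only $O(1)$ per slice, so the non-resonant total really is $O(2k-j)$ --- you correctly absorb this into $2^{(2k-j)/2}$, but the split should be made explicit. With those repairs the argument gives (\ref{0.5-1max}).
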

		\begin{proof}
			As the previous proof, we can decompose \( f \) as $f=\sum_{k=0}^{\infty} f_{k}$. If $k\lesssim 1$, the proof method of (\ref{klesssim1}) and (\ref{klesssim2})  can be equally applicable to get (\ref{klesssim3}) as well. If $k\gg 1$, we can divide $t\in(0,1)$ into three parts as
			(\ref{tdivision1}) to define 
			\[
			\left\|\sup _{0<t<1} \frac{\left|U_\gamma f_k(x,t)-f_k(x)\right|}{t^{\delta}}\right\|_{L^{2}([-1,1])}:=I +I I+ I I I.
			\]
			For the first term $I$, we will treat it as (\ref{Iest}) and obtain the same estimate for the first term, but slightly different for the second term, which is
			\begin{align*}
				&\left\|\sup _{0<t<2^{-2k}} \frac{\left|f_{k}(\gamma(x, t))-f_{k}(x)\right|}{t^{ \delta}}\right\|_{L^{2}([-1,1])} \\
				&\lesssim \sum_{j \geq 1} \frac{1}{(2\pi)j !} \left\|\sup_{0<t<2^{-2k}} \frac{\left|\gamma(x, t)-x\right|^j}{t^{ \delta}}\left|\int_{\mathbb{R}} e^{i x \cdot \xi}  \xi^j \hat{f}_{k}(\xi) d \xi\right|\right\|_{L^{2}([-1,1])} \\
				&\lesssim \sum_{j \geq 1} \frac{2^{-2\alpha k j+2k\delta}}{(2\pi)j !} \left\|\int_{\mathbb{R}} e^{i x \cdot \xi}  \xi^j\hat{f}_{k}(\xi) d \xi\right\|_{L^{2}([-1,1])} \\
				&\lesssim \sum_{j \geq 1} \frac{2^{-2\alpha k j+2k\delta} 2^{k j} }{(2\pi)j !}\left\|\hat{f}_{k}\right\|_{L^{2}\left(\mathbb{R}\right)} \\
				&\lesssim 2^{2k\delta}\left\|{f}_{k}\right\|_{L^{2}\left(\mathbb{R}\right)}. \\
			\end{align*}
			As for the third term, we can handle it similar to (\ref{divison3}), that is
			\[
			I I I\lesssim 2^{k\left(\delta+\frac{1}{4}\right)}\|{f}_{k}\|_{L^{2}\left(\mathbb{R}\right)}.
			\]
			As for the second term, we can get
			\begin{align*}
				&\left\|\sup _{2^{-2k}\leq t< 2^{-k}} \frac{\left|U_\gamma f_k(x,t)-f_k(x)\right|}{t^{\delta}}\right\|_{L^{2}([-1,1])} \\
				&\leq \left\|\sup _{k\leq j \leq2k} \sup _{2^{-j-1}\leq t< 2^{-j}} \frac{\left|U_\gamma f_k(x,t)\right|}{t^{\delta}}\right\|_{L^{2}([-1,1])}+ 2^{2{\delta k}}\|{f}_{k}\|_{L^{2}\left(\mathbb{R}\right)}\\
				&\leq 2\sum _{k\leq j \leq 2k}2^{j\delta} \left\| \sup _{2^{-j-1}\leq t< 2^{-j}} \left|U_\gamma f_k(x,t)\right|\right\|_{L^{2}([-1,1])}+ 2^{2{\delta k}}\|{f}_{k}\|_{L^{2}\left(\mathbb{R}\right)}\\
				&\lesssim  \sum _{k\leq j \leq 2k}2^{j\delta} \left\| \sup _{t\in (0,2^{-j})} \left|U_\gamma f_k(x,t)\right|\right\|_{L^{2}([-1,1])}+2^{2{\delta k}}\|{f}_{k}\|_{L^{2}\left(\mathbb{R}\right)}.\\
			\end{align*}
			Lemma \ref{proposition2} shows that
			\begin{align*}
				I I&\lesssim\left( \sum _{k\leq j \leq 2k}2^{j\left(\delta-\frac{1}{4}\right)+\frac{k}{2}}+2^{2{\delta k}}\right)\|{f}_{k}\|_{L^{2}\left(\mathbb{R}\right)}\\
				&\lesssim
				\begin{cases}
					k2^{k\left(\delta+\frac{1}{4}\right)}\|{f}_{k}\|_{L^{2}\left(\mathbb{R}\right)}&\delta \in\left[0,\frac{1}{4}\right),\\
					k2^{2k{\delta}}\|{f}_{k}\|_{L^{2}\left(\mathbb{R}\right)}&\delta \in\left[\frac{1}{4},\alpha \right),\\
				\end{cases}
			\end{align*}
			Hence, for any $\varepsilon>0 $,  $I+I I+I I I$ can be estimated by
			\[
			\left\|\sup _{0<t<1} \frac{\left|U_\gamma f_k(x,t)-f_k(x)\right|}{t^{\delta}}\right\|_{L^{2}([-1,1])} \lesssim_{\varepsilon}
			\begin{cases}
				2^{k\left(\delta+\frac{1}{4}+\varepsilon\right)}\|{f}_{k}\|_{L^{2}\left(\mathbb{R}\right)}&\delta \in\left[0,\frac{1}{4}\right),\\
				2^{k\left(2{\delta}+\varepsilon\right)}\|{f}_{k}\|_{L^{2}\left(\mathbb{R}\right)}&\delta \in\left[\frac{1}{4},\alpha \right).
			\end{cases}
			\]
			
		\end{proof}

		\textbf{Proof of Theorem \ref{theorem3}}. We rewrite an important estimate from \cite{Wang-Wang} as a lemma.
		\begin{lemma}\label{proposition3}
			Let $J=(0,2^{-j})$ with $2k\le j\le \frac{k}{\alpha}$ is an interval. For any curve $\gamma(x,t): \mathbb{R} \times[-1,1] \rightarrow \mathbb{R}$ satisfying $\gamma(x,0)=x$ is bilipschitz continuous in \( x \), and $\alpha$-Hölder continuous in \( t \) with \(  0 < \alpha \leq \frac{1}{4}\), we have
			
			\begin{equation}\label{0-0.25max}
				\left\|\sup _{t \in J}\left|U_\gamma f(\cdot, t)\right|\right\|_{L^{2}([-1,1])} \lesssim 2^{\frac{1}{2}\left(k-\alpha j \right)}\|f\|_{2} 
			\end{equation}
			
			for all \( f \) with supp\( \hat{f} \subset\left\{\xi:|\xi|\sim 2^k \right\} \).
		\end{lemma}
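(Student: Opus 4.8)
\textbf{Proof sketch of Lemma \ref{proposition3}.} The hypothesis $2k\le j$ forces $t|\xi|^{2}\lesssim 2^{2k-j}\lesssim 1$ for $t\in J$ and $|\xi|\sim 2^{k}$, so the Schrödinger phase is harmless and the estimate should come out of an elementary $TT^{*}$ argument in which the only relevant feature is the $\alpha$-Hölder displacement $|\gamma(x,t)-x|\lesssim 2^{-\alpha j}$ of the curve on $J$. First I would linearise: by a routine measurable-selection argument it is enough to bound, uniformly over all measurable $t:[-1,1]\to J$, the operator $T=T_{t(\cdot)}:L^{2}(\mathbb R)\to L^{2}([-1,1])$,
\[
Tf(x)=\frac1{2\pi}\int_{\mathbb R}e^{i(\gamma(x,t(x))\xi+t(x)|\xi|^{2})}\,\chi(2^{-k}\xi)\,\widehat f(\xi)\,d\xi ,
\]
where $\chi$ is a fixed smooth bump equal to $1$ on $\{|\xi|\sim 2^{k}\}$ and supported in a slightly larger annulus (so $Tf=U_\gamma f(\cdot,t(\cdot))$ on $[-1,1]$ for every $f$ with $\operatorname{supp}\widehat f\subset\{|\xi|\sim 2^{k}\}$). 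Since $\|T\|^{2}=\|TT^{*}\|$, it suffices to control the kernel of $TT^{*}$,
\[
K(x,x')=c\int_{\mathbb R}e^{i\Phi(\xi)}\,\chi(2^{-k}\xi)^{2}\,d\xi ,\qquad \Phi(\xi)=a\xi+b|\xi|^{2},
\]
with $a=\gamma(x,t(x))-\gamma(x',t(x'))$ and $b=t(x)-t(x')$, and then invoke Schur's test.

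The kernel estimate is the heart of the matter. From the bilipschitz hypothesis in $x$ and the $\alpha$-Hölder hypothesis in $t$ one gets $\tfrac1{C_{1}}|x-x'|-C_{3}2^{-\alpha j}\le|a|\le C_{2}|x-x'|+C_{3}2^{-\alpha j}$, while $|b|\le 2^{-j}$. Hence, for a suitably large constant $C$: if $|x-x'|\le C\,2^{-\alpha j}$ I simply use the trivial bound $|K(x,x')|\lesssim 2^{k}$; if $|x-x'|>C\,2^{-\alpha j}$ then $|\Phi'(\xi)|=|a+2b\xi|\gtrsim|x-x'|$ and $|\Phi''(\xi)|=2|b|\le 2^{1-j}$ on $\{|\xi|\sim 2^{k}\}$, and since $2k\le j$ (so $k\le j(1-\alpha)$ as $\alpha\le\tfrac14$) one checks $2^{-j}\lesssim 2^{-k}|x-x'|$, so $\Phi''$ cannot compete with $\Phi'$ and integration by parts $N\ge2$ times gives $|K(x,x')|\lesssim 2^{k}(2^{k}|x-x'|)^{-N}$. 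Therefore
\[
\int_{-1}^{1}|K(x,x')|\,dx'\lesssim 2^{k}\cdot 2^{-\alpha j}+2^{k}\!\!\int_{2^{-\alpha j}<|x-x'|\le 2}\!\!(2^{k}|x-x'|)^{-N}\,dx'\lesssim 2^{k-\alpha j}+1\lesssim 2^{k-\alpha j},
\]
the last inequality because $j\le k/\alpha$ makes $2^{k-\alpha j}\ge 1$; the same bound holds with $x$ and $x'$ exchanged. By Schur's test $\|TT^{*}\|_{L^{2}([-1,1])\to L^{2}([-1,1])}\lesssim 2^{k-\alpha j}$, hence $\|T\|\lesssim 2^{(k-\alpha j)/2}$, and undoing the linearisation yields (\ref{0-0.25max}).

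The main obstacle is precisely this kernel estimate together with checking that the admissible range $2k\le j\le k/\alpha$ (with $\alpha\le\tfrac14$) is exactly what is needed: the lower bound $j\ge 2k$ keeps both $t|\xi|^{2}$ and $\Phi''$ negligible in the integration by parts, while the upper bound $j\le k/\alpha$ keeps $k-\alpha j\ge 0$ so that the near-diagonal piece $2^{k-\alpha j}$ is the dominant contribution; everything else is bookkeeping. (This is the estimate recorded in \cite{Wang-Wang}; the argument is included for completeness.)
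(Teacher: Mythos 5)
Your argument is correct, but note that the paper does not actually prove Lemma \ref{proposition3}: the estimate (\ref{0-0.25max}) is imported verbatim from \cite{Wang-Wang}, and the proof environment that follows the lemma statement in the paper is really the derivation of Theorem \ref{theorem3} from the lemma (decomposition in $k$, splitting of $t\in(0,1)$, summation over dyadic $j$). So you have supplied a self-contained proof of the black-box ingredient rather than reproduced the paper's argument. Your route --- Kolmogorov--Seliverstov--Plessner linearisation, $TT^{*}$, non-stationary phase, Schur's test --- is the natural one in this regime and all the key inequalities check out: the splitting of $a=\gamma(x,t(x))-\gamma(x',t(x'))$ into a bilipschitz piece and a H\"older piece gives $|a|\gtrsim|x-x'|$ off the $2^{-\alpha j}$-neighbourhood of the diagonal; the chain $|2b\xi|\lesssim 2^{k-j}\le 2^{-\alpha j}\ll|x-x'|$ (using $j\ge 2k$ and $\alpha\le\tfrac14$, hence $k\le(1-\alpha)j$) shows the phase is non-stationary and that $\Phi''$ is harmless in the integration by parts; and the condition $j\le k/\alpha$ makes the near-diagonal term $2^{k-\alpha j}$ dominate the tail, so Schur's test gives $\|TT^{*}\|\lesssim 2^{k-\alpha j}$ and hence the claimed bound $2^{(k-\alpha j)/2}$. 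The one step you should spell out in a full write-up is the linearisation itself (continuity of $U_\gamma f(x,\cdot)$ for band-limited $f$, so the supremum over $J$ is a countable supremum and is realised up to $\varepsilon$ by a measurable selection $t(x)$), and you should record explicitly that $N=2$ integrations by parts already suffice for the tail sum; both are routine.
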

		
		\begin{proof}
			As the previous proof, we can decompose \( f \) as $f=\sum_{k=0}^{\infty} f_{k}$. If $k\lesssim 1$, we can get (\ref{klesssim3}) as well. If $k\gg 1$, we can  divide $t\in(0,1)$ into three parts, that is
			\begin{equation}\label{divisonalpha}
				\begin{aligned}
					&\left\|\sup _{0<t<1} \frac{\left|U_\gamma f_k(x,t)-f_k(x)\right|}{t^{\delta}}\right\|_{L^{2}([-1,1])} \le \left\|\sup _{0<t<2^{-\frac{k}{\alpha}}} \frac{\left|U_\gamma f_k(x,t)-f_k(x)\right|}{t^{\delta}}\right\|_{L^{2}([-1,1])}+\\
					&\left\|\sup _{2^{-\frac{k}{\alpha}}\leq t< 2^{-2k}} \frac{\left|U_\gamma f_k(x,t)-f_k(x)\right|}{t^{\delta}}\right\|_{L^{2}([-1,1])}+\left\|\sup _{2^{-2k}\leq t<1} \frac{\left|U_\gamma f_k(x,t)-f_k(x)\right|}{t^{\delta}}\right\|_{L^{2}([-1,1])}\\
					&:=I+I I +I I I.
				\end{aligned}
			\end{equation}
			
			For the first term, notice that
			\begin{equation}\label{firstterm2-1}
				I \leq \left\|\sup _{0<t<2^{-\frac{k}{\alpha}}} \frac{\left|U_\gamma f_k(x,t)-f_k(\gamma(x, t))\right|}{t^{\delta}}\right\|_{L^{2}([-1,1])}+\left\|\sup _{0<t<2^{-\frac{k}{\alpha}}} \frac{\left|f_{k}(\gamma(x, t))-f_{k}(x)\right|}{t^{ \delta}}\right\|_{L^{2}([-1,1])}
			\end{equation}
			We can estimate these two terms separately. Firstly, Taylor's formula and the method of Lemma 5.1 in \cite{MR4297035} show that
			\begin{equation}\label{firstterm2-2}
				\begin{aligned}
					&\left\|\sup _{0<t<2^{-\frac{k}{\alpha}}} \frac{\left|U_\gamma f_k(x,t)-f_k(\gamma(x, t))\right|}{t^{\delta}}\right\|_{L^{2}([-1,1])} \\
					&\leq \sum_{j=1}^{\infty} \frac{2^{-\frac{kj}{\alpha}+\frac{\delta k}{\alpha}}}{(2\pi)j !}\left\|\sup _{0<t<2^{-\frac{k}{\alpha}}}\left|\int_{\mathbb{R}} e^{i \gamma(x, t) \cdot \xi} |\xi|^{2j} \hat{f}_{k}(\xi) d \xi\right|\right\|_{L^{2}([-1,1])} \\
					&\leq \sum_{j=1}^{\infty} \frac{2^{-\frac{kj}{\alpha}+\frac{\delta k}{\alpha}}}{(2\pi)j !}\sum_{\mathfrak{l} \in \mathbb{Z}} \frac{C}{(1+|\mathfrak{l}|)^{2}}\left\|\int_{\mathbb{R}} e^{i\left(x+\frac{\mathfrak{l}}{2^{k}}\right) \cdot \xi} |\xi|^{2j} \hat{f}_{k}(\xi) d \xi\right\|_{L^{2}([-1,1])} \\
					&\leq \sum_{j=1}^{\infty}\frac{2^{-\frac{kj}{\alpha}+\frac{\delta k}{\alpha}}}{(2\pi)j !}\sum_{\mathfrak{l} \in \mathbb{Z}} \frac{C}{(1+|\mathfrak{l}|)^{2}}\left\||\xi|^{2j} \hat{f}_{k}(\xi)\right\|_{L^{2}\left(\mathbb{R}\right)} \\
					&\lesssim \sum_{j=1}^{\infty} \frac{2^{-\frac{kj}{\alpha}+\frac{\delta k}{\alpha}} 2^{2 k j}}{(2\pi)j !}\left\|\hat{f}_{k}(\xi)\right\|_{L^{2}\left(\mathbb{R}\right)} \\
					&\lesssim 2^{\frac{\delta k}{\alpha}}\|{f}_{k}\|_{L^{2}\left(\mathbb{R}\right)}
				\end{aligned}
			\end{equation}
			For the next term, we can handle it via Taylor's formula only, that is
			\begin{equation}\label{firstterm2-3}
				\begin{aligned}
					&\left\|\sup _{0<t<2^{-\frac{k}{\alpha}}} \frac{\left|f_{k}(\gamma(x, t))-f_{k}(x)\right|}{t^{ \delta}}\right\|_{L^{2}([-1,1])} \\
					&\lesssim \sum_{j \geq 1} \frac{1}{(2\pi)j !} \left\|\sup_{0<t<2^{-\frac{k}{\alpha}}} \frac{\left|\gamma(x, t)-x\right|^j}{t^{ \delta}}\left|\int_{\mathbb{R}} e^{i x \cdot \xi}  \xi^j \hat{f}_{k}(\xi) d \xi\right|\right\|_{L^{2}([-1,1])} \\
					&\lesssim \sum_{j \geq 1} \frac{2^{-k j+\frac{\delta k}{\alpha}}}{(2\pi)j !} \left\|\int_{\mathbb{R}} e^{i x \cdot \xi}  \xi^j\hat{f}_{k}(\xi) d \xi\right\|_{L^{2}([-1,1])} \\
					&\lesssim \sum_{j \geq 1} \frac{2^{-k j+\frac{\delta k}{\alpha}} 2^{k j} }{(2\pi)j !}\left\|\hat{f}_{k}\right\|_{L^{2}\left(\mathbb{R}\right)} \\
					&\lesssim 2^{\frac{\delta k}{\alpha}}\left\|{f}_{k}\right\|_{L^{2}\left(\mathbb{R}\right)}. \\
				\end{aligned}
			\end{equation}
			As for the third term, we can handle it similar to (\ref{divison3}), that is
			\[
			\left\|\sup _{2^{-2k}\leq t<1} \frac{\left|U_\gamma f_k(x,t)-f_k(x)\right|}{t^{\delta}}\right\|_{L^{2}([-1,1])}\lesssim 2^{k\left(2\delta+\frac{1}{2}-\alpha\right)}\|{f}_{k}\|_{L^{2}\left(\mathbb{R}\right)}
			\]
			As for the second term, we can get
			\begin{align*}
				&\left\|\sup _{2^{-\frac{k}{\alpha}}\leq t< 2^{-2k}} \frac{\left|U_\gamma f_k(x,t)-f_k(x)\right|}{t^{\delta}}\right\|_{L^{2}([-1,1])} \\
				&\leq \left\|\sup _{2k\leq j \leq \frac{k}{\alpha}} \sup _{2^{-j-1}\leq t< 2^{-j}} \frac{\left|U_\gamma f_k(x,t)\right|}{t^{\delta}}\right\|_{L^{2}([-1,1])}+ 2^{\frac{\delta k}{\alpha}}\|{f}_{k}\|_{L^{2}\left(\mathbb{R}^{d}\right)}\\
				&\leq 2\sum _{2k\leq j \leq \frac{k}{\alpha}}2^{j\delta} \left\| \sup _{2^{-j-1}\leq t< 2^{-j}} \left|U_\gamma f_k(x,t)\right|\right\|_{L^{2}([-1,1])}+ 2^{\frac{\delta k}{\alpha}}\|{f}_{k}\|_{L^{2}\left(\mathbb{R}\right)}\\
				&\lesssim  \sum _{2k\leq j \leq \frac{k}{\alpha}}2^{j\delta} \left\| \sup _{t\in (0,2^{-j})} \left|U_\gamma f_k(x,t)\right|\right\|_{L^{2}([-1,1])}+2^{\frac{\delta k}{\alpha}}\|{f}_{k}\|_{L^{2}\left(\mathbb{R}\right)}.
			\end{align*}
			Lemma \ref{proposition2} shows that
			\begin{align*}
				I I&\lesssim\left( \sum _{2k\leq j \leq \frac{k}{\alpha}}2^{j\left(\delta-\frac{\alpha}{2}\right)+\frac{k}{2}}+2^{\frac{\delta k}{\alpha}}\right)\|{f}_{k}\|_{L^{2}\left(\mathbb{R}\right)}\\
				&\lesssim
				\begin{cases}
					\left(k2^{k\left(2\delta+\frac{1}{2}-\alpha \right)}+2^{k\frac{\delta}{\alpha}}\right)\|{f}_{k}\|_{L^{2}\left(\mathbb{R}\right)}&\delta \in\left[0,\frac{\alpha}{2}\right),\\
					\left(k2^{k\frac{\delta}{\alpha}}+2^{k\frac{\delta}{\alpha}}\right)\|{f}_{k}\|_{L^{2}\left(\mathbb{R}\right)}&\delta \in\left[\frac{\alpha}{2},\alpha \right),\\
				\end{cases}\\
				&\lesssim
				\begin{cases}
					k2^{k\left(2\delta+\frac{1}{2}-\alpha\right)}\|{f}_{k}\|_{L^{2}\left(\mathbb{R}\right)}&\delta \in\left[0,\frac{\alpha}{2}\right),\\
					k2^{k\frac{\delta}{\alpha}}\|{f}_{k}\|_{L^{2}\left(\mathbb{R}\right)}&\delta \in\left[\frac{\alpha}{2},\alpha \right).
				\end{cases}
			\end{align*}
			Hence, for any $\varepsilon>0 $,  $I+I I+I I I$ can be estimated by
			\[
			\left\|\sup _{0<t<1} \frac{\left|U_\gamma f_k(x,t)-f_k(x)\right|}{t^{\delta}}\right\|_{L^{2}([-1,1])} \lesssim_{\varepsilon}
			\begin{cases}
				2^{k\left(2\delta+\frac{1}{2}-\alpha+\varepsilon\right)}\|{f}_{k}\|_{L^{2}\left(\mathbb{R}\right)}&\delta \in\left[0,\frac{\alpha}{2}\right),\\
				2^{k\left(\frac{\delta}{\alpha}+\varepsilon\right)}\|{f}_{k}\|_{L^{2}\left(\mathbb{R}\right)}&\delta \in\left[\frac{\alpha}{2},\alpha \right).\\
			\end{cases}
			\]			
		\end{proof}

		\textbf{Proof of Theorem \ref{theorem4}}. We rewrite an important estimate from \cite{Wang-Wang} as a lemma.
		
		\begin{lemma}\label{proposition4}
			Let $J=(0,2^{-j})$ with $k\le j\le \frac{k}{\alpha}$ is an interval. For any curve $\gamma(x,t): \mathbb{R} \times[-1,1] \rightarrow \mathbb{R}$ satisfying $\gamma(x,0)=x$ is bilipschitz continuous in \( x \), and $\alpha$-Hölder continuous in \( t \) with \(  \frac{1}{4}< \alpha < \frac{1}{2}\), we have
			\begin{equation}\label{0.25-0.5max}
				\left\|\sup _{t \in J}\left|U_{\gamma} f(\cdot, t)\right|\right\|_{L^{2}([-1,1])}\lesssim 
				\begin{cases}
					2^{\frac{1}{2}\left(k-\alpha j \right)}\|f\|_{2}  & 2k < j \le \frac{k}{\alpha},\\ 
					2^{\left(\frac{1}{2}-\alpha \right)k}\|f\|_{2}  & 4\alpha k < j \le 2k,\\
					2^{\frac{1}{4}\left(2k-j\right)}\|f\|_{2}  & k \le j \le 4\alpha k.
				\end{cases}
			\end{equation}
			for all \( f \) with supp \( \hat{f} \subset\left\{\xi:|\xi|\sim 2^k \right\} \).
		\end{lemma}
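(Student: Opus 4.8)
\textbf{Proof proposal for Lemma \ref{proposition4}.}

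The plan is to first eliminate the curve by the pointwise identity
\[
U_\gamma f(x,t)=\big(e^{it(-\Delta)}f\big)\big(\gamma(x,t)\big),
\]
which follows at once from comparing the two integral formulas. Since $\gamma(x,0)=x$ and $\gamma$ is $\alpha$-Hölder in $t$ we have $|\gamma(x,t)-x|\le C_3 t^{\alpha}$ on $J=(0,2^{-j})$, hence
\[
\sup_{t\in J}|U_\gamma f(x,t)|\ \le\ \mathcal M_j f(x):=\sup\Big\{\,|e^{it(-\Delta)}f(z)|:\ 0<t<2^{-j},\ |z-x|\le C_3 t^{\alpha}\,\Big\}.
\]
Thus \eqref{0.25-0.5max} is reduced to a bound for the \emph{tangential maximal function} $\mathcal M_j$ of the free Schrödinger group, and the curve has disappeared (the bilipschitz-in-$x$ hypothesis is used only for the matching lower bounds). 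Parabolically rescaling $\xi=2^k\eta$, $z=2^{-k}w$, $t=2^{-2k}s$ with $\widehat f(\xi)=2^{-k/2}\widehat h(\xi/2^k)$ turns this into
\[
\big\|\mathcal M_j f\big\|_{L^2([-1,1])}=\big\|\mathcal N h\big\|_{L^2(B(0,2^k))},\qquad
\mathcal N h(u)=\sup\Big\{|e^{is(-\Delta)}h(w)|:\ 0<s<R,\ |w-u|\le\Lambda s^{\alpha}\Big\},
\]
where $R:=2^{2k-j}$, $\Lambda:=C_3 2^{k(1-2\alpha)}$, $\operatorname{supp}\widehat h\subset\{|\eta|\sim1\}$ and $\|h\|_2=\|f\|_2$. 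One checks that the three bounds of \eqref{0.25-0.5max} read, respectively, $R^{1/4}\|h\|_2$ for $k\le j\le 4\alpha k$, $\Lambda^{1/2}\|h\|_2$ for $4\alpha k<j\le 2k$, and $(\Lambda R^{\alpha})^{1/2}\|h\|_2$ for $2k<j\le \tfrac k\alpha$.

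The range $2k<j\le\tfrac k\alpha$ (where $R<1$) is elementary: there $t|\xi|^2\lesssim1$ on $\operatorname{supp}\widehat f$, so one Taylor-expands $e^{it(-\Delta)}$, the series converges geometrically, and $\mathcal M_j f$ is reduced to the translation maximal function $g\mapsto\sup_{|\zeta|\le C_3 2^{-j\alpha}}|g(\cdot+\zeta)|$ applied to $(-\Delta)^{n}f$. Combining the elementary estimate $\big\|\sup_{|\zeta|\le\rho}|g(\cdot+\zeta)|\big\|_2\lesssim\max(1,2^k\rho)^{1/2}\|g\|_2$ (valid for $\operatorname{supp}\widehat g\subset\{|\xi|\sim2^k\}$) with $\|(-\Delta)^{n}f\|_2\lesssim2^{2nk}\|f\|_2$ and summing in $n$ gives the bound $(\Lambda R^{\alpha})^{1/2}\|h\|_2=2^{\frac12(k-\alpha j)}\|f\|_2$; this is precisely the argument used for the first term in the proof of Theorem \ref{theorem3}.

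For $k\le j\le 2k$ (where $R\ge1$) decompose the time interval dyadically, $\{0<s<R\}=\{0<s<1\}\cup\bigcup_{0\le m\lesssim\log R}\{s\sim2^{-m}R\}$. On $\{s<1\}$ the propagator is a harmless perturbation of the identity and the contribution is $\lesssim\Lambda^{1/2}\|h\|_2$ by the translation maximal estimate above. On a block $I_m:=\{s\sim2^{-m}R\}$ the spatial window allowed by $\mathcal N$ has width $\rho_m:=\Lambda(2^{-m}R)^{\alpha}$, and the task is to bound $\big\|\sup_{s\in I_m,\,|z|\le\rho_m}|e^{is(-\Delta)}h(\cdot+z)|\big\|_{L^2(B(0,2^k))}$. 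Here one uses the sharp one-dimensional Schrödinger maximal estimate $\big\|\sup_{0<s<T}|e^{is(-\Delta)}h|\big\|_{L^2_{\rm loc}}\lesssim T^{1/4}\|h\|_2$ for $\operatorname{supp}\widehat h\subset\{|\eta|\sim1\}$ (equivalent to Carleson's $s=\tfrac14$ theorem, provable by $TT^{*}$ and van der Corput, or from the $d=1$ $L^4$ Strichartz inequality) together with a wave-packet decomposition to absorb the window $\{|z|\le\rho_m\}$: at frequency $\sim1$ the solution propagates at unit speed, so a spatial supremum over an interval of length $\rho_m$ can be exchanged — after splitting that interval into $\sim\max(1,\rho_m/(2^{-m}R))$ propagation cells — for a temporal supremum over a comparable interval. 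Summing the resulting geometric series in $m$ and adding the $\{s<1\}$ contribution yields $\lesssim\big(R^{1/4}+\Lambda^{1/2}\big)\|h\|_2$: when $j\le 4\alpha k$ the first term dominates (this is exactly the regime in which $R^{1/4}\ge\Lambda^{1/2}$), and when $4\alpha k<j\le2k$ the second dominates; in both cases this is the asserted bound. Undoing the rescaling gives \eqref{0.25-0.5max}.

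The essential difficulty is the step in the previous paragraph that trades a spatial supremum for a temporal one while keeping the sharp exponent $\tfrac14$: a crude triangle inequality over the propagation cells, or over a frequency decomposition, would only produce $\tfrac12$, which is too lossy precisely when $4\alpha k<j$, so one must run the argument so as to retain the bilinear (or $L^4$-Strichartz) structure peculiar to $d=1$. This bookkeeping — and the verification that the break-points $j=2k$ (where $R=1$, i.e.\ the Schrödinger phase becomes harmless) and $j=4\alpha k$ (where $R^{1/4}=\Lambda^{1/2}$, i.e.\ the ``Carleson cost'' and the ``tangential-width cost'' balance) are exactly where the dominant mechanism changes — is the technical heart; I would carry it out following \cite{Wang-Wang}.
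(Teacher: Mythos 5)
First, note that the paper itself does not prove Lemma \ref{proposition4}: the estimate \eqref{0.25-0.5max} is quoted from \cite{Wang-Wang}, and the proof environment that follows it in the paper is the derivation of the maximal estimate for Theorem \ref{theorem4} \emph{from} the lemma. So your proposal is attempting something the paper does not do, and in the end you also defer the decisive step ("I would carry it out following \cite{Wang-Wang}"), which already means the proposal is not a proof. But there is a more serious, structural problem: your very first reduction is too lossy to ever give \eqref{0.25-0.5max}. Replacing the single curve by the tangential-region maximal operator $\mathcal M_j f(x)=\sup\{|e^{it(-\Delta)}f(z)|:0<t<2^{-j},\,|z-x|\le C_3t^{\alpha}\}$ discards the bilipschitz-in-$x$ hypothesis, and that hypothesis is essential for the upper bound, not only "for the matching lower bounds" as you claim. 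Indeed, take $\widehat f(\xi)=e^{-it_0\xi^2}\chi_{\{|\xi|\sim 2^k\}}(\xi)$ with $t_0=2^{-j}/2$, so that $|e^{it_0(-\Delta)}f(z)|\sim 2^k$ for $|z|\lesssim 2^{-k}$ and $\|f\|_2\sim 2^{k/2}$. Every $x$ with $|x|\le C_3t_0^{\alpha}\sim 2^{-\alpha j}$ may choose $t=t_0$ and $z$ near the focus, so $\mathcal M_jf(x)\gtrsim 2^k$ on a set of measure $\sim 2^{-\alpha j}$, whence
\begin{equation*}
\frac{\|\mathcal M_j f\|_{L^2([-1,1])}}{\|f\|_2}\ \gtrsim\ 2^{\frac12\left(k-\alpha j\right)} ,
\end{equation*}
which for every $k\le j<2k$ is strictly larger than both $2^{\frac14(2k-j)}$ and $2^{(\frac12-\alpha)k}$ (since $\alpha<\tfrac12$). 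So $\mathcal M_j$ simply does not satisfy the bounds of \eqref{0.25-0.5max} in the two regimes that carry the content of the lemma; only in the regime $2k<j\le k/\alpha$, where the target is exactly $2^{\frac12(k-\alpha j)}$, is your reduction harmless, and there your Taylor-expansion/translation argument does work (it is the same as the treatment of the term $I$ in the proof for Theorem \ref{theorem3}). For a bilipschitz curve the focusing set $\{x:\ \gamma(x,t_0)\in B(0,2^{-k})\}$ has measure $\lesssim 2^{-k}$ rather than $2^{-\alpha j}$, which is precisely what the hypothesis buys and what any correct proof must exploit (e.g.\ via a Kolmogorov--Seliverstov--Plessner linearization $t=t(x)$ and a $TT^{*}$/oscillatory-integral argument in which the map $x\mapsto\gamma(x,t(x))$ enters).

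Even setting this aside, the sketch for $k\le j\le 2k$ does not close. After the dyadic time decomposition, the per-block costs your mechanism would naturally yield include the window cost $\rho_m^{1/2}=\bigl(\Lambda(2^{-m}R)^{\alpha}\bigr)^{1/2}$, and summing these over $m$ gives $(\Lambda R^{\alpha})^{1/2}=2^{\frac12(k-\alpha j)}$, not $\Lambda^{1/2}=2^{(\frac12-\alpha)k}$; so the asserted conclusion $\lesssim(R^{1/4}+\Lambda^{1/2})\|h\|_2$ does not follow from the stated steps. The "exchange a spatial supremum for a temporal one over propagation cells" heuristic is exactly the point where a genuine argument is needed (the data at rescaled frequency $\sim1$ contains packets with velocities of both signs, and a triangle inequality over cells loses the square root you identify), and you explicitly leave it to \cite{Wang-Wang}. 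In short: the elementary regime is handled correctly and the exponent bookkeeping ($R=2^{2k-j}$, $\Lambda\sim2^{k(1-2\alpha)}$, breakpoints $j=2k$ and $j=4\alpha k$) is right, but the core estimate is not proved, and the reduction on which your plan rests is demonstrably incompatible with the sharp bounds claimed in Lemma \ref{proposition4}.
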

		
		\begin{proof}
			As the previous proof, we can decompose \( f \) as $f=\sum_{k=0}^{\infty} f_{k}$. If $k\lesssim 1$, we can get (\ref{klesssim3}) as well. If $k\gg 1$, we can divide $t\in(0,1)$ into three parts as
			(\ref{tdivision1}) to define 
			\[
			\left\|\sup _{0<t<1} \frac{\left|U_\gamma f_k(x,t)-f_k(x)\right|}{t^{\delta}}\right\|_{L^{2}([-1,1])}:=I +I I+ I I I.
			\]
			For the first term $I$, we will treat it as (\ref{firstterm2-1}), then obtain the same estimate as (\ref{firstterm2-2}) and (\ref{firstterm2-3}) to get
			\[
			\left\|\sup _{0<t<2^{-\frac{k}{\alpha}}} \frac{\left|f_{k}(\gamma(x, t))-f_{k}(x)\right|}{t^{ \delta}}\right\|_{L^{2}([-1,1])}\lesssim 2^{\frac{\delta k}{\alpha}}\left\|{f}_{k}\right\|_{L^{2}\left(\mathbb{R}\right)}.
			\]
			As for the third term, we can handle it similar to (\ref{divison3}), that is
			\[
			\left\|\sup _{2^{-k}\leq t<1} \frac{\left|U_\gamma f_k(x,t)-f_k(x)\right|}{t^{\delta}}\right\|_{L^{2}([-1,1])}\lesssim 2^{k\left(\delta+\frac{1}{4}\right)}\|{f}_{k}\|_{L^{2}\left(\mathbb{R}\right)}.
			\]
			As for the second term, we can get
			\begin{align*}
				&\left\|\sup _{2^{-\frac{k}{\alpha}}\leq t< 2^{-k}} \frac{\left|U_\gamma f_k(x,t)-f_k(x)\right|}{t^{\delta}}\right\|_{L^{2}([-1,1])} \\
				&\leq \left\|\sup _{k\leq j \leq \frac{k}{\alpha}} \sup _{2^{-j-1}\leq t< 2^{-j}} \frac{\left|U_\gamma f_k(x,t)\right|}{t^{\delta}}\right\|_{L^{2}([-1,1])}+ 2^{\frac{\delta k}{\alpha}}\|{f}_{k}\|_{L^{2}\left(\mathbb{R}^{d}\right)}\\
				&\lesssim \sum _{k\leq j \leq \frac{k}{\alpha}}2^{j\delta} \left\| \sup _{2^{-j-1}\leq t< 2^{-j}} \left|U_\gamma f_k(x,t)\right|\right\|_{L^{2}([-1,1])}+ 2^{\frac{\delta k}{\alpha}}\|{f}_{k}\|_{L^{2}\left(\mathbb{R}\right)}\\
				&\lesssim  \sum _{k\leq j \leq 4\alpha k}2^{j\delta} \left\| \sup _{t\in (0,2^{-j})} \left|U_\gamma f_k(x,t)\right|\right\|_{L^{2}([-1,1])}+\sum _{ 4\alpha k \leq j \leq 2k }2^{j\delta} \left\| \sup _{t\in (0,2^{-j})} \left|U_\gamma f_k(x,t)\right|\right\|_{L^{2}([-1,1])}\\
				&+ \sum _{2k \leq j \leq \frac{k}{\alpha}}2^{j\delta} \left\| \sup _{t\in (0,2^{-j})} \left|U_\gamma f_k(x,t)\right|\right\|_{L^{2}([-1,1])}+2^{\frac{\delta k}{\alpha}}\|{f}_{k}\|_{L^{2}\left(\mathbb{R}\right)}.
			\end{align*}
			Lemma \ref{proposition4} shows that
			\begin{align*}
				I I&\lesssim \left(\sum _{k\leq j \leq 4\alpha k}2^{j\left(\delta-\frac{1}{4}\right)+\frac{k}{2}}+\sum _{ 4\alpha k \leq j \leq 2k }2^{j\delta+\left(\frac{1}{2}-\alpha\right)k} +\sum _{2k \leq j \leq \frac{k}{\alpha}}2^{j\left(\delta-\frac{\alpha}{2}\right)+\frac{k}{2}}+2^{\frac{\delta k}{\alpha}}\right)\|{f}_{k}\|_{L^{2}\left(\mathbb{R}\right)}\\
				&\lesssim
				\begin{cases}
					\left(k2^{k\left(\delta+\frac{1}{4}\right)}+k2^{k\left(2\delta+\frac{1}{2}-\alpha\right)}+k2^{k\left(2\delta+\frac{1}{2}-\alpha\right)}+2^{k\frac{\delta}{\alpha}}\right)\|{f}_{k}\|_{L^{2}\left(\mathbb{R}\right)}&\delta \in\left[0,\frac{\alpha}{2}\right),\\
					\left(k2^{k\left(\delta+\frac{1}{4}\right)}+k2^{k\left(2\delta+\frac{1}{2}-\alpha\right)}+k2^{k\frac{\delta}{\alpha}}+2^{k\frac{\delta}{\alpha}}\right)\|{f}_{k}\|_{L^{2}\left(\mathbb{R}\right)}&\delta \in\left[\frac{\alpha}{2},\frac{1}{4}\right),\\
					\left(k2^{k\left(4\alpha \delta+\frac{1}{2}-\alpha\right)}+k2^{k\left(2\delta+\frac{1}{2}-\alpha\right)}+k2^{k\frac{\delta}{\alpha}}+2^{k\frac{\delta}{\alpha}}\right)\|{f}_{k}\|_{L^{2}\left(\mathbb{R}\right)}&\delta \in\left[\frac{1}{4},\alpha \right),\\
				\end{cases}\\
				&\lesssim
				\begin{cases}
					k2^{k\left(\delta+\frac{1}{4}\right)}\|{f}_{k}\|_{L^{2}\left(\mathbb{R}\right)}&\delta \in\left[0,\alpha-\frac{1}{4}\right),\\
					k2^{k\left(2\delta+\frac{1}{2}-\alpha\right)}\|{f}_{k}\|_{L^{2}\left(\mathbb{R}\right)}&\delta \in\left[\alpha-\frac{1}{4},\frac{\alpha}{2}\right),\\
					k2^{k\frac{\delta}{\alpha}}\|{f}_{k}\|_{L^{2}\left(\mathbb{R}\right)}&\delta \in\left[\frac{\alpha}{2},\alpha \right).\\
				\end{cases}
			\end{align*}
			Hence, for any $\varepsilon>0 $,  $I+I I+I I I$ can be estimated by
			\[
			\left\|\sup _{0<t<1} \frac{\left|U_\gamma f_k(x,t)-f_k(x)\right|}{t^{\delta}}\right\|_{L^{2}([-1,1])} \lesssim_{\varepsilon}
			\begin{cases}
				2^{k\left(\delta+\frac{1}{4}+\varepsilon\right)}\|{f}_{k}\|_{L^{2}\left(\mathbb{R}\right)}&\delta \in\left[0,\alpha-\frac{1}{4}\right),\\
				2^{k\left(2\delta+\frac{1}{2}-\alpha+\varepsilon\right)}\|{f}_{k}\|_{L^{2}\left(\mathbb{R}\right)}&\delta \in\left[\alpha-\frac{1}{4},\frac{\alpha}{2}\right),\\
				2^{k\left(\frac{\delta}{\alpha}+\varepsilon\right)}\|{f}_{k}\|_{L^{2}\left(\mathbb{R}\right)}&\delta \in\left[\frac{\alpha}{2},\alpha \right).
			\end{cases}
			\]

		\end{proof}

		\section{Necessary conditions}\label{chap4}
		
		In order to prove necessity in {Theorem \ref{theorem1}}, {Theorem \ref{theorem2}}, {Theorem \ref{theorem3}} and {Theorem \ref{theorem4}}, we use arguments from Nikishin-Stein theory, which means we only need to construct counterexamples of maximal functions.

		\begin{proposition}\label{steinmax}
			Let $\delta\in [0,1)$. Assuming that for every \( f \in H^{s}(\mathbb{R}^{d}) \), the limit \( \underset{t \rightarrow 0}{\lim} \frac{U_{\gamma} f(x, t)-f(x)}{t^\delta} \) exists for almost every \( x \in \mathbb{R}^{d} \). Then we have
			\begin{equation}\label{strong}
				\left\|\sup _{t \in (0,1)}\left|\frac{U_{\gamma} f(x, t)-f(x)}{t^\delta}\right|\right\|_{L^{2}(B(0,1))} \lesssim \|f\|_{H^{s}\left(\mathbb{R}^{d}\right)}.
			\end{equation}
		\end{proposition}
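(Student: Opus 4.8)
The plan is to derive (\ref{strong}) from the Nikishin--Stein maximal principle: after linearizing the supremum in $t$, the almost‑everywhere existence of the limit becomes the statement that a family of \emph{linear} operators on $H^{s}(\mathbb R^{d})$ is pointwise finite, and on an $L^{2}$‑based space such pointwise finiteness can be turned into a quantitative maximal bound. Concretely, write
\[
T^{*}f(x):=\sup_{0<t<1}\left|\frac{U_{\gamma}f(x,t)-f(x)}{t^{\delta}}\right|,\qquad x\in B(0,1),
\]
and, for a Borel selection $\tau\colon B(0,1)\to(0,1)$, set $S_{\tau}f(x):=\bigl(U_{\gamma}f(x,\tau(x))-f(x)\bigr)\tau(x)^{-\delta}$, which is linear in $f$. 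Since for $f$ in a dense class (Schwartz functions) and $\gamma$ bilipschitz in $x$, Lipschitz/H\"older in $t$, the map $t\mapsto U_{\gamma}f(x,t)$ is continuous, the supremum defining $T^{*}f$ may be realized over a countable family of such selections; hence $\|T^{*}f\|_{L^{2}(B(0,1))}=\sup_{\tau}\|S_{\tau}f\|_{L^{2}(B(0,1))}$, and it suffices to bound $\|S_{\tau}f\|_{L^{2}(B(0,1))}$ uniformly in $\tau$.

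Next I would check that $\{S_{\tau}\}_{\tau}$ is pointwise bounded on $H^{s}(\mathbb R^{d})$, i.e.\ $T^{*}f(x)<\infty$ for a.e.\ $x\in B(0,1)$ and every $f\in H^{s}$. For $t\to0$ this is immediate from the hypothesis, which guarantees the difference quotient has a finite limit; for $t$ in a compact subinterval $[\eta,1)$ one bounds $\sup_{\eta\le t<1}|U_{\gamma}f(x,t)|$ for a.e.\ $x$ by a routine argument, proved first for Schwartz $f$ and then extended by the continuity of $f\mapsto U_{\gamma}f$ on $H^{s}$ furnished by the assumptions on $\gamma$. With pointwise finiteness in hand, I would invoke Nikishin's factorization theorem: the $S_{\tau}$ are continuous linear maps from the separable Hilbert space $H^{s}(\mathbb R^{d})$ into the space of measurable functions on the finite‑measure space $B(0,1)$ (with the topology of convergence in measure), they are pointwise bounded, and $H^{s}$ is a Hilbert space, in particular of cotype $2$; Nikishin's theorem then yields the weak‑type bound
\[
\|T^{*}f\|_{L^{2,\infty}(B(0,1))}=\sup_{\tau}\|S_{\tau}f\|_{L^{2,\infty}(B(0,1))}\lesssim\|f\|_{H^{s}(\mathbb R^{d})}.
\]

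To pass from the weak bound to (\ref{strong}), I would decompose $f=\sum_{k}f_{k}$ with $\operatorname{supp}\widehat{f_{k}}\subset\{|\xi|\sim2^{k}\}$. For each $k$ the weak bound gives $\|T^{*}f_{k}\|_{L^{2,\infty}(B(0,1))}\lesssim2^{ks}\|f_{k}\|_{2}$, while a crude pointwise estimate in the spirit of (\ref{klesssim1})--(\ref{klesssim2}), together with $\|\widehat{f_{k}}\|_{1}\lesssim2^{kd/2}\|f_{k}\|_{2}$, yields $\|T^{*}f_{k}\|_{L^{\infty}(B(0,1))}\lesssim2^{Ck}\|f_{k}\|_{2}$ for some $C=C(d)$. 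Interpolating these two bounds on the finite‑measure space $B(0,1)$ costs only a factor $\lesssim_{\varepsilon}2^{\varepsilon k}$, so $\|T^{*}f_{k}\|_{L^{2}(B(0,1))}\lesssim_{\varepsilon}2^{k(s+\varepsilon)}\|f_{k}\|_{2}$, and summing the resulting geometric series gives (\ref{strong}) with $s$ replaced by $s+2\varepsilon$. Since Theorems \ref{theorem1}--\ref{theorem4} only assert their conclusions for $s>s(\delta)$, this $\varepsilon$‑loss is harmless for the necessity arguments of Section \ref{chap4}.

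The main obstacle is the passage through Nikishin's theorem itself: one must verify its hypotheses carefully, above all the almost‑everywhere finiteness of $T^{*}f$ for $t$ away from $0$, which genuinely uses the bilipschitz/H\"older structure of $\gamma$ together with a density argument rather than being automatic; and one must handle the weak‑to‑strong upgrade, since the general Nikishin factorization comes with a positive weight that need not be bounded below. It is worth noting that the counterexamples in Section \ref{chap4} are built from curves of the special form $\gamma(x,t)=x+g(t)$, for which $U_{\gamma}$ commutes with translations in $x$; for such curves Stein's maximal principle applies directly, the weight can be taken constant, and the global weak‑type estimate — hence (\ref{strong}) up to the harmless $\varepsilon$ — is available without any of these subtleties.
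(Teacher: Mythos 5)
The paper offers no proof of Proposition \ref{steinmax} beyond the phrase ``arguments from Nikishin--Stein theory'', so your proposal is in effect the missing argument, and it follows the intended route. It is correct in substance and, usefully, makes explicit two points the paper suppresses. First, Nikishin's factorization (the relevant hypothesis is that $H^{s}$ has \emph{type} $2$, not cotype $2$ --- a harmless slip, since a Hilbert space has both) only yields a weak-type $L^{2,\infty}$ bound with a weight, equivalently a bound on a subset $E\subset B(0,1)$ of almost full measure; removing the exceptional set requires the translation invariance $U_{\gamma}(f(\cdot-y))(x,t)=U_{\gamma}f(x-y,t)$ together with an averaging/covering argument, which is available for the counterexample curves $\gamma(x,t)=x\pm t^{\alpha}e_{1}$ of Section \ref{chap4} but not for a general bilipschitz/H\"older $\gamma$. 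Second, your weak-to-strong upgrade --- Littlewood--Paley decomposition, the trivial bound $\|T^{*}f_{k}\|_{L^{\infty}}\lesssim 2^{Ck}\|f_{k}\|_{2}$, and $L^{2,\infty}$--$L^{\infty}$ interpolation on a finite measure space at the cost of a factor $\sqrt{k}\lesssim_{\varepsilon}2^{\varepsilon k}$ --- delivers (\ref{strong}) only with $s$ replaced by $s+2\varepsilon$. So what you actually prove is strictly weaker than the literal statement (translation-invariant curves only, and an $\varepsilon$-loss in $s$), but, as you observe, this weaker statement is exactly what the necessity arguments of Section \ref{chap4} consume: they conclude $s\ge s(\delta)$ from the maximal bound, and an $\varepsilon$-loss does not affect that conclusion. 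The one step you should not label ``routine'' is the a.e.\ finiteness of $\sup_{\eta\le t<1}|U_{\gamma}f(x,t)|$ for arbitrary $f\in H^{s}$ with small $s$; the pointwise definition of $U_{\gamma}f$ is itself delicate there, and the hypothesis of the proposition only controls $t\to0$ via the Banach principle. The clean fix is to state and prove the maximal inequality for $\sup_{0<t<t_{0}}$ with $t_{0}$ small, which is all the counterexamples require since they select $t_{x}\to0$.
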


		\begin{theorem}\label{counterex1}
			Let $d\geq 1$ and the curve $\gamma(x,t)=x-\left(t^\alpha,0,…,0\right)$ with $\alpha \geq \frac{1}{2}$. For $\delta\in [0,1)$, the corresponding maximal estimate (\ref{strong}) yields $s\geq \frac{d}{2(d+1)}+\delta$.
		\end{theorem}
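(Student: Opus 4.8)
The plan is to use that for this particular curve the operator $U_{\gamma}$ is, at each fixed $t$, merely a translate of the free Schrödinger evolution: since $e^{-it^{\alpha}\xi_{1}}\widehat f(\xi)$ is the Fourier transform of $f(\cdot-t^{\alpha}e_{1})$, one has
\[
U_{\gamma}f(x,t)=\bigl(e^{it(-\Delta)}f\bigr)\bigl(x-t^{\alpha}e_{1}\bigr),\qquad t\in(0,1).
\]
Thus (\ref{strong}) for this $\gamma$ bounds $\bigl\|\sup_{t\in(0,1)}t^{-\delta}\bigl|(e^{it(-\Delta)}f)(x-t^{\alpha}e_{1})-f(x)\bigr|\bigr\|_{L^{2}(B(0,1))}$ by $\|f\|_{H^{s}}$, and it suffices, by testing (\ref{strong}) against such $f$ and letting the frequency tend to infinity, to produce for each large dyadic $N=2^{k}$ a function $f_{N}$ with $\operatorname{supp}\widehat{f_{N}}\subset\{|\xi|\sim N\}$ for which the left side of (\ref{strong}) is $\gtrsim N^{\frac{d}{2(d+1)}+\delta-\varepsilon}\|f_{N}\|_{2}$; since $\|f_{N}\|_{H^{s}}\sim N^{s}\|f_{N}\|_{2}$, this forces $s\ge\frac{d}{2(d+1)}+\delta-\varepsilon$, and then $\varepsilon\downarrow0$ gives the claim.

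For $f_{N}$ I take the sharp counterexample for the convergence rate of the free evolution (Dahlberg–Kenig \cite{MR0654188} when $d=1$, Bourgain \cite{MR3574661}, Luc\`a–Rogers \cite{MR3903115}, Fan–Wang \cite{Fan-Wang} in general). Two features of this example will be used: (i) the relevant time scale is $t\sim N^{-1}$, i.e.\ there are a set $E_{N}\subset B(0,1)$ with $|E_{N}|\gtrsim N^{-\varepsilon}$ and times $t_{N}(x)\sim N^{-1}$ with $\bigl|e^{it_{N}(x)(-\Delta)}f_{N}(x)-f_{N}(x)\bigr|\gtrsim N^{\frac{d}{2(d+1)}-\varepsilon}\|f_{N}\|_{2}$ for $x\in E_{N}$ — it is precisely this time scale that yields the exponent $\frac{d}{2(d+1)}+\delta$ rather than $\frac{d}{2(d+1)}+2\delta$; and (ii) $f_{N}$ is built from frequency blocks of diameter $\sim N^{1/2}$, so that for $t\lesssim N^{-1}$ the function $e^{it(-\Delta)}f_{N}$ is a superposition of wave packets of spatial width $\sim N^{-1/2}$, and hence the super-level set $\{\,y:\ |e^{it(-\Delta)}f_{N}(y)|\ge\tfrac12 N^{\frac{d}{2(d+1)}-\varepsilon}\|f_{N}\|_{2}\,\}$ contains balls of radius $\gtrsim N^{-1/2}$. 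When $d=1$ both facts are explicit: taking $\widehat{f_{N}}=\chi_{\{N\le\xi\le N+\sqrt N\}}$, a one–line stationary–phase computation shows that for $x\in(-1,-\tfrac12)$ and $y$ with $|y+2tN|\lesssim N^{-1/2}$, $t\lesssim N^{-1}$, one has $|e^{it(-\Delta)}f_{N}(y)|\gtrsim\sqrt N=N^{\frac14}\|f_{N}\|_{2}$, while $|f_{N}(x)|\lesssim1\ll\sqrt N$; so $t_{N}(x)=-x/(2N)\sim N^{-1}$ and $E_{N}=(-1,-\tfrac12)$ do the job with $\frac{d}{2(d+1)}=\tfrac14$.

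Now transfer through the translation. Since $\alpha\ge\tfrac12$, at $t=t_{N}(x)\sim N^{-1}$ the translation vector satisfies $|t_{N}(x)^{\alpha}|\le t_{N}(x)^{1/2}\lesssim N^{-1/2}$, which (after shrinking $E_{N}$ by a harmless constant) is at most the radius of the balls furnished by (ii); hence $x-t_{N}(x)^{\alpha}e_{1}$ still lies in the super-level set of $e^{it_{N}(x)(-\Delta)}f_{N}$, so that
\[
\bigl|U_{\gamma}f_{N}(x,t_{N}(x))-f_{N}(x)\bigr|=\bigl|(e^{it_{N}(x)(-\Delta)}f_{N})(x-t_{N}(x)^{\alpha}e_{1})-f_{N}(x)\bigr|\gtrsim N^{\frac{d}{2(d+1)}-\varepsilon}\|f_{N}\|_{2}
\]
for $x$ in a set of measure $\gtrsim N^{-\varepsilon}$. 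Dividing by $t_{N}(x)^{\delta}\sim N^{-\delta}$ and integrating over that set,
\[
\Bigl\|\sup_{t\in(0,1)}\Bigl|\tfrac{U_{\gamma}f_{N}(x,t)-f_{N}(x)}{t^{\delta}}\Bigr|\Bigr\|_{L^{2}(B(0,1))}\gtrsim N^{\delta}\cdot N^{\frac{d}{2(d+1)}-\varepsilon}\cdot N^{-\varepsilon/2}\,\|f_{N}\|_{2}=N^{\frac{d}{2(d+1)}+\delta-2\varepsilon}\|f_{N}\|_{2}.
\]
Feeding this into (\ref{strong}) and letting $N\to\infty$ gives $s\ge\frac{d}{2(d+1)}+\delta-2\varepsilon$, and $\varepsilon\downarrow0$ finishes the proof.

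The substantive point — trivial for $d=1$, genuine for $d\ge2$ — is feature (ii): one must know that the sharp Bourgain–Luc\`a–Rogers type counterexample focuses at times of order $N^{-1}$ on a union of balls of radius $\gtrsim N^{-1/2}$, so that the $e_{1}$–translation of size $t^{\alpha}\lesssim N^{-1/2}$ inherent in $U_{\gamma}$ does not push the evaluation point out of the large–value set (equivalently, that $e^{it(-\Delta)}f_{N}(\cdot-t^{\alpha}e_{1})$ and $e^{it(-\Delta)}f_{N}(\cdot)$ are comparable at the relevant times). The value $\alpha=\tfrac12$ is exactly the threshold at which $t^{\alpha}$ matches the spatial scale $N^{-1/2}$ of these balls at the focusing time $t\sim N^{-1}$, which is why the hypothesis $\alpha\ge\tfrac12$ is the natural one and the resulting condition $s\ge\frac{d}{2(d+1)}+\delta$ is sharp.
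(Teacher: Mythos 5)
Your proposal follows essentially the same route as the paper: both reduce to Bourgain's sharp counterexample for the free evolution, exploit that the focusing time is $t\sim R^{-1}$ (so that dividing by $t^{\delta}$ gains a factor $R^{\delta}$, which is exactly where the exponent $\frac{d}{2(d+1)}+\delta$ rather than $\frac{d}{2(d+1)}+2\delta$ comes from), and absorb the translation $t^{\alpha}\lesssim R^{-1/2}$ using the $R^{-1/2}$ width of the $x_{1}$-profile $\varphi(R^{1/2}x_{1})$. Your $d=1$ computation is complete and correct.

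One caveat for $d\ge 2$: your feature (ii) as stated --- that the super-level set of $e^{it(-\Delta)}f_{N}$ contains balls of radius $\gtrsim N^{-1/2}$ --- is not what Bourgain's example provides, and the wave-packet heuristic you give for it does not hold in the $x'$ variables: there the example is a lattice sum with frequencies $D\ell$, $|\xi'|\sim R$, and the Gauss-sum factor is large only on a set with structure at the much finer scales $D^{-1}$ and $R^{-1}$. The correct geometric statement is that the super-level set is a union of slabs of thickness $\sim R^{-1/2}$ in the $x_{1}$ direction with thin $x'$ cross-sections. Since your curve translates only along $e_{1}$, this $x_{1}$-direction thickness is all that is needed, so your conclusion stands; this is precisely how the paper's explicit computation handles it, by observing that the shift $t^{\alpha}$ enters only through the factor $\varphi\bigl(R^{1/2}(x_{1}-t^{\alpha}+2Rt)\bigr)$ while the Gauss sum in $x'$ is untouched. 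You should replace the ``balls of radius $N^{-1/2}$'' claim by this directional statement; as written, that step would fail if taken literally.
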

		\begin{proof}
			This counterexample is inspired by Bourgain \cite{MR3574661}. For the convenience of readers, we may sketch some proof.
			
			We write \( x=\left(x_{1}, \ldots, x_{n}\right)=\left(x_{1}, x^{\prime}\right) \in B(0,1) \subset \mathbb{R}^{d} \). Let \( \varphi: \mathbb{R} \rightarrow \mathbb{R}_{+} \) and \( \Phi: \mathbb{R}^{d-1} \rightarrow \mathbb{R}_{+} \) satisfy supp \( \hat{\varphi} \subset[-1,1] \), supp \( \hat{\Phi} \subset B(0,1), \hat{\varphi}, \hat{\Phi} \) smooth, and \( \varphi(0)=\Phi(0)=1 \). Set \( D=R^{\frac{d+2}{2(d+1)}} \), and define
			\[
			f(x)=e^{iR x_{1}} \varphi\left(R^{1 / 2} x_{1}\right) \Phi\left(x^{\prime}\right) \prod_{j=2}^{d}\left(\sum_{\frac{R}{2 D}<\ell_{j}<\frac{R}{D}} e^{i D \ell_{j} x_{j}}\right),
			\]
			where \( \ell=\left(\ell_{2}, \ldots, \ell_{d}\right) \in \mathbb{Z}^{d-1} \). Then
			\[
			\|f\|_{H^s} \sim R^{s-1 / 4}\left(\frac{R}{D}\right)^{\frac{d-1}{2}}.
			\]
			It is easy to get
			\[
			\begin{aligned}
				U_\gamma f(x)&=\frac{1}{(2\pi)^{d}}\iint \hat{\varphi}(\lambda) \hat{\Phi}\left(\xi^{\prime}\right) \\
				&\quad \times\left\{\sum_{\ell} e^{i\left(\left(R+\lambda R^{1 / 2}\right) \left( x_{1}-t^\alpha\right)+\left(\xi^{\prime}+D \ell\right) \cdot x^{\prime}+\left(R+\lambda R^{1 / 2}\right)^{2} t+\left|\xi^{\prime}+D \ell\right|^{2} t\right)}\right\} d \lambda d \xi^{\prime}.
			\end{aligned}
			\]
			Taking \( |t|<\frac{c}{R},|x|<c \), for suitable constant \( c>0 \), one gets
			\begin{equation}\label{bourgaincex}
				\begin{aligned}
					\left|U_\gamma f(x)\right| & \sim\left|\int \hat{\varphi}(\lambda)\left\{\sum_{\ell} e\left(\lambda R^{1 / 2} x_{1}+D \ell \cdot x^{\prime}+2 \lambda R^{3 / 2} t+D^{2}|\ell|^{2} t\right)\right\} d \lambda\right| \\
					& \sim \varphi\left(R^{1 / 2}\left(x_{1}+2 R t\right)\right)\left|\sum_{\ell} e\left(D \ell \cdot x^{\prime}+D^{2}|\ell|^{2} t\right)\right| .
				\end{aligned}
			\end{equation}
			More precisely, choose \( t=-\frac{x_1}{2R}+\tau \) with \( |\tau|<R^{-3 / 2} / 10 \) to ensure the first factor in (\ref{bourgaincex}) is \( \sim 1 \). Via the quadratic Gauss sum evaluation, for second factor, we have
			\[
			\left|\sum_{\ell} e\left(D \ell \cdot x^{\prime}+D^{2}|\ell|^{2} t\right)\right|\sim R^{\frac{d-1}{4}}
			\]
			on a set \( A_{R} \subset B(0,1) \) with $m\left(A_{R}\right) \gtrsim \frac{1}{log log R}$. At the same time, upper bound for \( \left|f(x)\right| \) from \cite{Fan-Wang} shows that
			\[
			\left|f(x)\right| \lesssim R^{\frac{d-1}{2(d+1)}}, \quad\left(x_{1}, x^{\prime}\right) \in A_{R}.
			\]
		If maximal estimate (\ref{strong})  holds, we have
			\[
			\begin{aligned}
				\frac{\left\|\underset{t \in (0,1)}{\sup} \left|\displaystyle\frac{U_{\gamma} f(x, t)-f(x)}{t^\delta}\right|\right\|_{L^{1}(B(0,1))}}{\|f\|_{H^s}}&\gtrsim \frac{R^{\frac{d-1}{4}}R^{\delta}}{R^{s-1 / 4}\left(\frac{R}{D}\right)^{\frac{d-1}{2}}}\\
				&=R^{\delta+\frac{d}{2(d+1)}-s}.
			\end{aligned}
			\]
			Let $R \to \infty$, we can get $s\geq \frac{d}{2(d+1)}+\delta$.
		\end{proof}

		\begin{theorem}\label{counterex2}
			Let $d\geq 1$ and the  curve $\gamma(x,t)=x-t^\alpha e_1$ with $\alpha \geq \frac{1}{2}$.  For $\delta\in [0,\alpha)$, the corresponding maximal estimate (\ref{strong}) yields $s\geq 2\delta$.
		\end{theorem}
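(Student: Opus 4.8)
The plan is to exhibit a single wave-packet counterexample concentrated at a large frequency $Ne_1$, in the spirit of the standard ``focusing'' example for $e^{it(-\Delta)}$. Fix a smooth bump $\psi$ supported in $B(0,1)\subset\mathbb R^{d}$ with $\int\psi\neq0$, and for a large parameter $N\gg1$ set $\widehat f(\xi)=\psi(\xi-Ne_{1})$. Writing $g(x):=\frac1{(2\pi)^{d}}\int e^{ix\cdot\eta}\psi(\eta)\,d\eta$, a fixed Schwartz function which after rescaling $\psi$ we may assume satisfies $g(0)=1$, one has $f(x)=e^{iNx_{1}}g(x)$; moreover $\widehat f$ is supported in $\{|\xi|\sim N\}$, so $\|f\|_{H^{s}}\sim N^{s}$.

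Next I would insert $\gamma(x,t)=x-t^{\alpha}e_{1}$ into the definition of $U_{\gamma}$, substitute $\xi=Ne_{1}+\eta$, and split off the part of the phase that does not depend on $\eta$. This produces the identity
\[
U_{\gamma}f(x,t)=e^{iNx_{1}}\,e^{i\theta(t)}\,R(x,t),\qquad \theta(t):=-Nt^{\alpha}+N^{2}t,
\]
where $R(x,t)=\frac1{(2\pi)^{d}}\int e^{i(x\cdot\eta+\phi(\eta,t))}\psi(\eta)\,d\eta$ and $\phi(\eta,t)=(2Nt-t^{\alpha})\eta_{1}+t|\eta|^{2}$. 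Since $|e^{i\phi}-1|\le|\phi|\lesssim Nt+t^{\alpha}$ on $\operatorname{supp}\psi$, we get $|R(x,t)-g(x)|\lesssim Nt+t^{\alpha}$ uniformly in $x$, and hence
\[
U_{\gamma}f(x,t)-f(x)=e^{iNx_{1}}\Big((e^{i\theta(t)}-1)g(x)+O(Nt+t^{\alpha})\Big).
\]

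The crucial step is the choice of time: take $t=cN^{-2}$ with $c$ in the fixed range $[1,4]$. Then the error $Nt+t^{\alpha}\sim N^{-1}+N^{-2\alpha}\to0$, while $\theta(cN^{-2})=c-c^{\alpha}N^{1-2\alpha}$ stays bounded and strictly increasing in $c$ --- here is where $\alpha\ge\frac12$ enters, so that $N^{1-2\alpha}\le1$ --- running from a value $<1$ at $c=1$ to a value $\ge2$ at $c=4$; an intermediate value argument yields $c=c_{N}\in(1,4)$ with $\theta(c_{N}N^{-2})=1$, so $|e^{i\theta(c_{N}N^{-2})}-1|=2\sin\tfrac12$. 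Fixing $c_{0}$ small enough that $|g(x)|\ge\frac12$ on $|x|\le c_{0}$, the identity above gives $|U_{\gamma}f(x,c_{N}N^{-2})-f(x)|\gtrsim1$ there for $N$ large, whence
\[
\Big\|\sup_{t\in(0,1)}\Big|\frac{U_{\gamma}f(x,t)-f(x)}{t^{\delta}}\Big|\Big\|_{L^{2}(B(0,1))}\ \gtrsim\ (c_{N}N^{-2})^{-\delta}\ \gtrsim\ N^{2\delta}.
\]
Combined with $\|f\|_{H^{s}}\sim N^{s}$, the hypothesis (\ref{strong}) forces $N^{2\delta}\lesssim N^{s}$ for all large $N$, i.e. $s\ge2\delta$.

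The only real obstacle is the balancing in the third step: the dispersive phase $N^{2}t$ must attain size $\sim1$, which pins $t$ at scale $N^{-2}$ and hence $t^{-\delta}$ at $N^{2\delta}$, while at that scale the curve-induced phase $Nt^{\alpha}\sim N^{1-2\alpha}$ must stay $O(1)$ and the wave-packet spreading $Nt+t^{\alpha}$ must be $o(1)$ --- both requiring $1-2\alpha\le0$. For $\alpha<\frac12$ the term $Nt^{\alpha}$ would overwhelm $N^{2}t$ at $t\sim N^{-2}$, and a different, stronger obstruction (of order $N^{\delta/\alpha}$, driven by the curve rather than by dispersion) takes its place; this is why the statement is confined to $\alpha\ge\frac12$.
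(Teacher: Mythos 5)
Your proof is correct, and it takes a genuinely different route from the paper's. The paper modulates a frequency-width-$R$ bump to frequency $R^{1+\varepsilon}$, evaluates in the spatial tail $x_1\sim R^{-1+\varepsilon}$ where $|f|$ has already decayed, and chooses an $x$-dependent time $t_{x_1}=x_1/R^{1+\varepsilon}\sim R^{-2}$ at which the solution refocuses; the lower bound then lives on a set of measure only $R^{-1+\varepsilon}$, the norm comparison gives $s\ge(2\delta+\varepsilon/2)/(1+\varepsilon)$, and a final limit $\varepsilon\to0$ is required. You instead use a unit-width packet at frequency $N$, so that $U_{\gamma}f-f\approx e^{iNx_1}(e^{i\theta(t)}-1)g(x)$ with $\theta(t)=N^{2}t-Nt^{\alpha}$, and extract the lower bound from the rotation of this global phase at a single time $t_N\sim N^{-2}$ chosen by the intermediate value theorem, uniformly on a ball of measure $\sim1$; this removes the $\varepsilon$-loss entirely and isolates the true mechanism behind $s\ge2\delta$ (the dispersive phase $e^{iN^{2}t}$ needs time $N^{-2}$ to turn over). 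Both arguments use $\alpha\ge\tfrac12$ in the same place: at the scale $t\sim N^{-2}$ the curve's contributions $Nt^{\alpha}\sim N^{1-2\alpha}$ and $t^{\alpha}\eta_1$ must stay bounded, respectively negligible. Two minor points worth writing out if you formalize this: the IVT endpoint bound $\theta(4N^{-2})=4-4^{\alpha}N^{1-2\alpha}\ge2$ holds with equality at $\alpha=\tfrac12$ and for large $N$ otherwise, so the threshold case is exactly attained; and the error $|R(x,t)-g(x)|\lesssim Nt+t^{\alpha}$ must be absorbed after, not before, fixing $|g|\ge\tfrac12$ on the small ball, which your ordering already respects.
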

		\begin{proof}
			
			We write \( x=\left(x_{1}, \ldots, x_{d}\right)=\left(x_{1}, x^{\prime}\right) \in B(0,1) \subset \mathbb{R}^{d} \). Let  \( g \in  C_0^{\infty} \) with supp $g\subset [-\frac{1}{2},\frac{1}{2}]$ such that \( g(\xi) \geq 0 \) and \( \int g(\xi) d \xi=1 \). We construct a family of functions \( f_{ R,\varepsilon} \), with large \(R \), defined by
			\[
			\widehat{f}_{ R,\varepsilon}(\eta)=\frac{1}{R} g\left( \frac{\eta_1+R^{1+\varepsilon}}{R}\right)g(\eta_2)…g(\eta_n).
			\]
			It is easy to calculate the Sobolev norm of \( {f}_{ R,\varepsilon} \)
			\begin{equation}\label{sobolevnorm}
				\left\|f_{R,\varepsilon}\right\|_{H^{s}} \lesssim R^{(1+\varepsilon)s-\frac{1}{2}}.
			\end{equation}
			After changing variables $\frac{\eta_1+R^{1+\varepsilon}}{R}\to \xi$, we have
			\[
			\begin{aligned}
				\left|U_\gamma f_{ R,\varepsilon}\left(x, t\right)\right|&= \left|\int  \frac{ e^{i\left((x_1-t^\alpha )\eta_1+t|\eta_1|^{2}\right)}   }{R} g\left( \frac{\eta_1+R^{1+\varepsilon}}{R}\right)\frac{d \eta_1}{2 \pi}\right|  \prod_{j=2}^{d}   \left|\int   e^{i\left(x_j \eta_j+t|\eta_j|^{2}\right)}  g\left( \eta_j \right)\frac{d \eta_j}{2 \pi}\right|\\
				&= \left|\int e^{i \Phi_{ R,\varepsilon}\left(\xi ; x_1, t\right)} g(\xi) \frac{d \xi}{2 \pi}\right|  \prod_{j=2}^{d}   \left|\int   e^{i\left(x_j \eta_j+t|\eta_j|^{2}\right)}  g\left( \eta_j \right)\frac{d \eta_j}{2 \pi}\right|
			\end{aligned}
			\]
			where $\Phi_{ R,\varepsilon}\left(\xi ; x_1, t\right)=(x_1-2R^{1+\varepsilon}t) R \xi+t  R^2 \xi^2-t^\alpha R\xi$ . Let 
			\[
			t_{x_1}=\frac{x_1}{R^{1+\varepsilon}},
			\]
			and for a suitably small constant $c$, consider $x\in $
			\[
			A_x=\left\{x:\left( \frac{c}{2} R^{-1+\varepsilon}<x_1<cR^{-1+\varepsilon}\right)\times (0,c)^{d-1}\right\}.
			\]
			We can find $t_{x_1}\in \left( \frac{c}{2} R^{-2},cR^{-2}\right)$, which means $\Phi_{ R}\left(\xi ; x_1, t_{x_1}\right)\leq \frac{1}{100}$. So we can get
			\[
			\begin{aligned}
				\left|\int e^{i \Phi_{ R,\varepsilon}\left(\xi ; x_1, t_{x_1}\right)} g(\xi) \frac{d \xi}{2 \pi}\right| & \geq \int g(\xi) \frac{d \xi}{2 \pi}-\int\left|e^{i \Phi_{R,\varepsilon}\left(\xi ; x_1, t_{x_1}\right)}-1\right| g(\xi) \frac{d \xi}{2 \pi} \\
				& \geq \frac{1}{2\pi}-\max _{|\xi| \leq 1 / 2}\left|e^{i \Phi_{R,\varepsilon}\left(\xi ; x_1, t_{x_1}\right)}-1\right|\\
				& \geq \frac{1}{4\pi}.
			\end{aligned}
			\]
			In addition, notice that $x_{i}<c$ and $t_{x_1}\le \frac{c}{R^2}$ for $i=2 ,…, d $,  we can also get
			\[
			\left|\int   e^{i\left(x_i \eta_i+t_{x_1}|\eta_i|^{2}\right)}  g\left( \eta_i \right)\frac{d \eta_i}{2 \pi}\right|\geq \frac{1}{4\pi},
			\]
			for $i=2 ,…, d $. Thus we have
			\begin{equation}\label{fanli1-maxestchai1}
					\sup _{t}\left|U_\gamma f_{ R,\varepsilon}\left(x, t\right)\right| \geq\left|U_\gamma f_{R,\varepsilon}\left(x,t_{x_1} \right)\right| \geq\left(\frac{1}{4\pi}\right)^{d}.
			\end{equation}
			Notice that $x_1 R \sim R^{\varepsilon}\gg 1$ when $R\to \infty$, which means
			\[
			\begin{aligned}
			 \left|f_{ R,\varepsilon}(x)\right|&=\left(\frac{1}{2\pi}\right)^{d}\left| \int_\mathbb{R} e^{ix_1 \eta_1} \frac{1}{R} g\left( \frac{\eta_1+R^{1+\varepsilon}}{R}\right)d\eta_1 \right|    \prod_{i=2}^{d}  \left| \int_\mathbb{R} e^{ix_i \eta_i}  g\left( \eta_i\right)d\eta_i \right| \\
				&\leq \left(\frac{1}{2\pi}\right)^{d}\left| \int_\mathbb{R} e^{ix_1R \xi } g\left( \xi \right)d\xi \right| \le \left(\frac{1}{8\pi}\right)^{d}.
			\end{aligned}
			\]
			So 
			\begin{equation}
				\begin{aligned}
					\frac{\left\|\underset{t \in (0,1)}{\sup}\left|\displaystyle\frac{U_{\gamma} f_{R,\varepsilon}(x, t)-f_{R,\varepsilon}(x)}{t^\delta}\right|\right\|_{L^{2}(B(0,1))}}{\left\|f_{R,\varepsilon}\right\|_{H^{s}}}&\gtrsim \frac{\left\| \left|\displaystyle\frac{U_{\gamma} f_{R,\varepsilon}(x, t_x)-f_{R,\varepsilon}(x)}{t_{x_1}^\delta}\right|\right\|_{L^{2}(B(0,1))}}{R^{(1+\varepsilon)s-\frac{1}{2}}}\\
					&\gtrsim \frac{R^{2\delta}R^{\frac{-1+\varepsilon}{2}}}{R^{(1+\varepsilon)s-\frac{1}{2}}}.
				\end{aligned}
			\end{equation}
			Let $R \to \infty$, we can get $s\geq \frac{2\delta+\frac{\varepsilon}{2}}{1+\varepsilon}$. Let $\varepsilon\to 0$, we can get $s\geq 2\delta$.
			
		\end{proof}

		\begin{theorem}\label{counterex3}
			Let  $d= 1$ and the curve $\gamma(x,t)=x-t^\alpha$ with $0<\alpha<\frac{1}{2}$. For $\delta\in [0,\alpha)$, the corresponding maximal estimate (\ref{strong}) yields $s\geq \frac{1}{2}-\alpha+2\delta$.
			\begin{proof}
				Let  \( g \in  C_0^{\infty} \) with supp $g\subset [-\frac{1}{2},\frac{1}{2}]$ such that \( g(\xi) \geq 0 \) and \( \int g(\xi) d \xi=1 \). We construct a family of functions \( f_{ R} \), with large \( R  \), defined by
				\[
				\widehat{f}_{R}(\eta)=\frac{1}{R} g\left( \frac{\eta}{R}\right)
				\]
				with $
					\left\|f_{R}\right\|_{H^{s}} \lesssim R^{s-\frac{1}{2}}
			$.
				After scaling, we have
				\[
				\left|U_\gamma f_{ R}\left(x, t\right)\right|=\left|\int e^{i\left((x-t^\alpha) \eta+t|\eta|^{2}\right)} \frac{1}{R} g(\frac{\eta}{R})  \frac{d \eta}{2 \pi}\right|=\left|\int e^{i \Phi_{ R}\left(\xi ; x, t\right)} g(\xi) \frac{d \xi}{2 \pi}\right|,
				\]
				where $\Phi_{ R}\left(\xi ; x, t\right)=(x-t^\alpha) R \xi+t  R^2 \xi^2$. Let 
				\[
				t_x=x^{\frac{1}{\alpha}},
				\]
				and for a suitably small constant $c$, consider $x$ in
				\[
				A_x=\left\{ x: \frac{c}{2} R^{-2\alpha}<x<cR^{-2\alpha}\right\}.
				\]
				We can find $t_x\in \left( \frac{c}{2} R^{-2},cR^{-2}\right)$, which means $\Phi_{ R}\left(\xi ; x, t\right)\leq \frac{1}{100}$. So we can get
				\begin{equation}\label{fanli1-maxestchai}
						\sup _{t}\left|U_\gamma f_{ R}\left(x, t\right)\right| \geq\left|U_\gamma f_{\lambda_{j}, R_{j}}\left(x, t_{x}\right)\right|
						 \geq \frac{1}{4\pi}.
				\end{equation}
				Notice that $xR \sim R^{1-2\alpha}\gg 1$ when $R\to \infty$, which means
				\[
				\left|f_{ R}(x)\right|=\frac{1}{2\pi}\left| \int_\mathbb{R} e^{ix\eta } \frac{1}{R} g\left( \frac{\eta}{R}\right)d\eta \right|=\frac{1}{2\pi}\left| \int_\mathbb{R} e^{ixR \xi } g\left( \xi \right)d\eta \right|\le \frac{1}{8\pi}.
				\]
				So the maximal estimate (\ref{strong}) yields
				\[
					1 \gtrsim \frac{R^{2\delta-\alpha}}{R^{s-\frac{1}{2}}}, 
				\]
				Let $R \to \infty$, we can get $s\geq \frac{1}{2}-\alpha+2\delta$.
			\end{proof}
			
		\end{theorem}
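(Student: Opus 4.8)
The plan is to follow the standard philosophy of these necessity proofs: for each large $R$ build a single frequency-localized test function $f_R$ and exhibit a set $A_R \subset (0,1)$ of small measure on which the maximal operator in (\ref{strong}) picks up a large power of $R$ while $\|f_R\|_{H^s}$ stays as small as possible. Concretely I would take $g \in C_0^\infty$ supported in $[-\tfrac12,\tfrac12]$ with $g \ge 0$, $\int g = 1$, and set $\widehat{f_R}(\eta) = R^{-1} g(\eta/R)$, so that $\|f_R\|_{H^s} \lesssim R^{s-1/2}$. Rescaling $\eta = R\xi$ gives
\[
U_\gamma f_R(x,t) = \frac{1}{2\pi}\int e^{i\Phi_R(\xi;x,t)} g(\xi)\, d\xi, \qquad \Phi_R(\xi;x,t) = (x - t^\alpha)R\xi + tR^2\xi^2 .
\]

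The heart of the matter is that a tangential curve introduces two separate time scales. To annihilate the $O(R)$ term in the phase I would choose $t_x = x^{1/\alpha}$, so that $x - t_x^\alpha = 0$ and $\Phi_R(\xi;x,t_x) = x^{1/\alpha}R^2\xi^2$; for this to be $\le \tfrac1{100}$ on $\operatorname{supp} g$ I need $x^{1/\alpha}R^2 \lesssim 1$, i.e. $x \lesssim R^{-2\alpha}$, and I want $x$ to saturate this. Hence set $A_R = \{\, x : \tfrac{c}{2}R^{-2\alpha} < x < c R^{-2\alpha}\,\}$ with $c$ small. On $A_R$ one has $t_x \sim R^{-2} \in (0,1)$ and $|\Phi_R(\xi;x,t_x)| \le \tfrac1{100}$ for $|\xi|\le 1/2$, so
\[
|U_\gamma f_R(x,t_x)| \ge \frac1{2\pi} - \sup_{|\xi|\le 1/2}\bigl|e^{i\Phi_R(\xi;x,t_x)} - 1\bigr| \ge \frac1{4\pi}.
\]
On the other hand $f_R(x) = \frac{1}{2\pi}\int e^{ixR\xi} g(\xi)\, d\xi$, and since $\alpha < \tfrac12$ we have $xR \sim R^{1-2\alpha} \to \infty$ on $A_R$, so integration by parts yields $|f_R(x)| \le \tfrac1{8\pi}$ for $R$ large. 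Thus $|U_\gamma f_R(x,t_x) - f_R(x)| \ge \tfrac1{8\pi}$ on $A_R$.

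Assembling: for $x \in A_R$,
\[
\sup_{t \in (0,1)}\left|\frac{U_\gamma f_R(x,t) - f_R(x)}{t^\delta}\right| \ge \frac{|U_\gamma f_R(x,t_x) - f_R(x)|}{t_x^\delta} \gtrsim t_x^{-\delta} \sim R^{2\delta},
\]
since $t_x \sim R^{-2}$. As $|A_R| \sim R^{-2\alpha}$, the left side of (\ref{strong}) is $\gtrsim R^{2\delta}\cdot R^{-\alpha} = R^{2\delta - \alpha}$, so (\ref{strong}) together with $\|f_R\|_{H^s} \lesssim R^{s-1/2}$ forces $R^{2\delta - \alpha} \lesssim R^{s-1/2}$; letting $R \to \infty$ gives $2\delta - \alpha \le s - \tfrac12$, i.e. $s \ge \tfrac12 - \alpha + 2\delta$.

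I expect the main obstacle to be the careful reconciliation of the two scales: the translation term forces $t_x = x^{1/\alpha}$, the dispersive term $tR^2 \lesssim 1$ forces $t_x \lesssim R^{-2}$, and these are simultaneously tight exactly on $x \sim R^{-2\alpha}$ — a set whose measure $R^{-2\alpha}$ vanishes, and this vanishing is precisely the source of the loss $R^{-\alpha}$ in the $L^2$ norm. The secondary, more routine point is the uniform lower bound $|U_\gamma f_R - f_R| \gtrsim 1$ on $A_R$, which needs $f_R(x)$ to be small there; this is where $\alpha < \tfrac12$ enters, via $xR \gg 1$.
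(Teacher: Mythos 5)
Your proposal is correct and follows essentially the same route as the paper: the same test function $\widehat{f}_R(\eta)=R^{-1}g(\eta/R)$, the same choice $t_x=x^{1/\alpha}$ on the set $x\sim R^{-2\alpha}$, the same lower bound $|U_\gamma f_R(x,t_x)|\ge \frac{1}{4\pi}$ versus $|f_R(x)|\le\frac{1}{8\pi}$, and the same final balance $R^{2\delta-\alpha}\lesssim R^{s-\frac12}$. Your write-up is in fact slightly more explicit than the paper's about where the measure factor $R^{-\alpha}$ enters the $L^2$ norm.
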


		\begin{theorem}\label{counterex4}
			Let  $d= 1$ and the curve $\gamma(x,t)=x-t^\alpha$ with $\frac{1}{4}\leq \alpha \leq 1$. For $\delta\in [0,\alpha)$, the corresponding maximal estimate (\ref{strong}) yields $s\geq \frac{1}{4}+\delta$.
		\end{theorem}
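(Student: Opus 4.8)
The plan is to adapt the Dahlberg--Kenig wave-packet counterexample (the source of the $s\ge\tfrac14$ bound in $d=1$, see \cite{MR0654188}) so that it also detects the weight $t^{-\delta}$. Since $\gamma(x,t)=x-t^{\alpha}$ one has $U_{\gamma}f(x,t)=e^{it(-\Delta)}f(x-t^{\alpha})$, so the only new ingredient compared with the classical $s\ge\tfrac14$ example is that the Schrödinger focusing must be arranged to occur at time $t\sim R^{-1}$, where $t^{-\delta}\sim R^{\delta}$. Fix $\chi\in C_{0}^{\infty}$ with $\operatorname{supp}\chi\subset[0,1]$, $\chi\ge0$ and $\int\chi=1$, and for large $R$ set
\[
\widehat{f_{R}}(\xi)=\chi\!\Big(\frac{\xi-R}{\sqrt R}\Big),
\]
so that $\operatorname{supp}\widehat{f_{R}}\subset[R,R+\sqrt R]$. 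A direct computation gives $\|f_{R}\|_{H^{s}}\sim R^{s+1/4}$, and $f_{R}(x)=\sqrt R\,e^{iRx}\psi(\sqrt R\,x)$ for a fixed Schwartz function $\psi$, hence $|f_{R}(x)|\lesssim_{N}\sqrt R\,(1+\sqrt R\,|x|)^{-N}$ for every $N$.

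Next I would study $e^{it(-\Delta)}f_{R}(y)$ for $0<t\le c_{0}R^{-1}$ with $c_{0}$ a small fixed constant. Writing $\xi=R+\sqrt R\,\sigma$ with $\sigma\in[0,1]$, the phase $y\xi+t\xi^{2}$ equals $(yR+tR^{2})+(y+2tR)\sqrt R\,\sigma+tR\sigma^{2}$, and $|tR\sigma^{2}|\le c_{0}$, so the quadratic term is a harmless perturbation. Consequently $|e^{it(-\Delta)}f_{R}(y)|\gtrsim\sqrt R$ whenever $y=-2tR$, i.e. on the focal line. Now for each $x$ in a fixed interval $A_{R}\subset(-1,1)$ of length $\sim1$ I would use the intermediate value theorem to solve $x-t_{x}^{\alpha}=-2Rt_{x}$ for some $t_{x}\sim R^{-1}$: the function $t\mapsto 2Rt-t^{\alpha}$ is strictly increasing on $[R^{-2},c_{0}R^{-1}]$ because $\alpha\le1$ makes $\alpha t^{\alpha-1}\ll R$ throughout that range, and it sweeps an interval of length $\sim1$. (This is precisely where the hypothesis $\tfrac14\le\alpha\le1$ enters, making the resulting exponent the sharp one.) With $t_{x}$ so chosen, $x-t_{x}^{\alpha}$ lies exactly on the focal line at time $t_{x}$, hence $|U_{\gamma}f_{R}(x,t_{x})|=|e^{it_{x}(-\Delta)}f_{R}(x-t_{x}^{\alpha})|\gtrsim\sqrt R$.

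Finally, since $x\in A_{R}$ stays a fixed distance from the origin, the pointwise decay of $f_{R}$ gives $|f_{R}(x)|\ll\sqrt R$, so $|U_{\gamma}f_{R}(x,t_{x})-f_{R}(x)|\gtrsim\sqrt R$; combining this with $t_{x}^{-\delta}\gtrsim R^{\delta}$, $t_{x}\in(0,1)$ and $|A_{R}|\sim1$ yields
\[
\Big\|\sup_{t\in(0,1)}\Big|\frac{U_{\gamma}f_{R}(x,t)-f_{R}(x)}{t^{\delta}}\Big|\Big\|_{L^{2}(B(0,1))}\ \gtrsim\ R^{\delta}\cdot\sqrt R\cdot|A_{R}|^{1/2}\ \sim\ R^{\delta+\frac12}.
\]
Feeding $\|f_{R}\|_{H^{s}}\sim R^{s+\frac14}$ into (\ref{strong}) and letting $R\to\infty$ forces $\delta+\tfrac12\le s+\tfrac14$, that is $s\ge\tfrac14+\delta$. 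The only slightly delicate points are the non-stationary bookkeeping in the second paragraph (keeping $tR\sigma^{2}$ genuinely small, which is what fixes the time window $t\sim R^{-1}$) and verifying that the focal equation is solvable for $x$ over a set of measure $\sim1$ inside $B(0,1)$ with $t_{x}$ pinned at scale $R^{-1}$; both are routine once $\alpha\le1$ is used, so I expect no substantial obstacle.
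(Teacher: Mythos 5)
Your construction is correct and is essentially the paper's own proof in a different normalization: the paper takes $\widehat{f_R}(\eta)=\frac1R\,g\bigl(\frac{\eta+R^2}{R}\bigr)$ (frequency $R^2$, bandwidth $R$, focal time $\sim R^{-2}$), which is your wave packet after the substitution $R\mapsto R^2$ and an $L^\infty$ renormalization, and it solves the same focal equation $x-t_x^\alpha-2R^2t_x=0$ by the intermediate value theorem before comparing $R^{2\delta}$ against $\|f_R\|_{H^s}\sim R^{2s-1/2}$. One small inaccuracy in your write-up: $t\mapsto 2Rt-t^\alpha$ need not be increasing on all of $[R^{-2},c_0R^{-1}]$ when $\alpha\le\frac12$ (at $t=R^{-2}$ one has $\alpha t^{\alpha-1}=\alpha R^{2-2\alpha}\gtrsim R$), but this is harmless since only the IVT and the identity $2Rt_x=t_x^\alpha-x\ge c_0$ are needed to pin $t_x\sim R^{-1}$.
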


		\begin{proof}
			Let  \( g \in  C_0^{\infty} \) with supp $g\subset [-\frac{1}{2},\frac{1}{2}]$ such that \( g(\xi) \geq 0 \) and \( \int g(\xi) d \xi=1 \). For  large \( R  \), define
			\[
			\widehat{f}_{R}(\eta)=\frac{1}{R} g\left( \frac{\eta+R^2}{R}\right).
			\]
			with
				$\left\|f_{R}\right\|_{H^{s}} \lesssim R^{2s-\frac{1}{2}}$.
			Changing variables $\frac{\eta+R^2}{R} \to \xi$, we have
			\[
			\left|U_\gamma f_{ R}\left(x, t\right)\right|=\left|\int e^{i\left((x-t^\alpha) \eta+t|\eta|^{2}\right)} \frac{1}{R} g(\frac{\eta+R^2}{R})  \frac{d \eta}{2 \pi}\right|=\left|\int e^{i \Phi_{ R}\left(\xi ; x, t\right)} g(\xi) \frac{d \xi}{2 \pi}\right|,
			\]
			where $\Phi_{ R}\left(\xi ; x, t\right)=(x-t^\alpha-2R^2 t) R \xi+t  R^2 \xi^2$.
			For $x\in \left( \frac{c}{2},c\right)$ with   a suitably small constant $c$, we obtain a unique solution $t_x\in (0,cR^{-2})$ satisfying
			\[
			x-t_x^\alpha-2R^2 t_x =0.
			\]
			Thus we can get  $\Phi_{ R}\left(\xi ; x, t\right)\leq \frac{1}{100}$. So we can get the estimate (\ref{fanli1-maxestchai}) as well.  In addition $xR \sim R\gg 1$ when $R\to \infty$, which means
			\[
			\left|f_{ R}(x)\right|=\frac{1}{2\pi}\left| \int_\mathbb{R} e^{ix\eta } \frac{1}{R} g\left( \frac{\eta+R^2}{R}\right)d\eta \right|=\frac{1}{2\pi}\left| \int_\mathbb{R} e^{ixR \xi } g\left( \xi \right)d\xi \right|\le \frac{1}{8\pi}.
			\]
			So the maximal estimate (\ref{strong}) yields
			\[
			1\gtrsim \frac{R^{2\delta}}{R^{2s-\frac{1}{2}}}.
			\]
			Let $R \to \infty$, we can get $s<\frac{1}{4}+\delta$.
		\end{proof}

		\begin{theorem}\label{counterex5}(inspired by \cite{Li2021OnCP})
			Let  $d= 1$ and the curve $\gamma(x,t)=x+t^\alpha$ with $0<\alpha<\frac{1}{2}$.  For $\delta\in [0,\alpha)$, the corresponding maximal estimate (\ref{strong}) yields $s\geq \frac{\delta}{\alpha}$.
		\end{theorem}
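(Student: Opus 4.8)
The strategy is the standard one for these necessity statements: build a one-parameter family of frequency-localized functions $f_R$ (large $R$) with $\|f_R\|_{H^s}\lesssim R^{s-1/2}$, and show that at a suitable critical time $t_R$ the quantity $\sup_t|U_\gamma f_R(x,t)-f_R(x)|/t^\delta$ is $\gtrsim R^{\delta/\alpha}$ on a ball of radius $\sim R^{-1}$; inserting this into (\ref{strong}) and letting $R\to\infty$ forces $s\ge\delta/\alpha$. I would take $g$ as in the statement and set $\widehat{f}_R(\eta)=\tfrac1R g(\eta/R)$, the same profile as in Theorem \ref{counterex3}, so $\|f_R\|_{H^s}\lesssim R^{s-1/2}$; after the substitution $\eta=R\xi$,
\[
U_\gamma f_R(x,t)=\frac1{2\pi}\int e^{i\Phi_R(\xi;x,t)}g(\xi)\,d\xi,\qquad \Phi_R(\xi;x,t)=(x+t^\alpha)R\xi+tR^2\xi^2,
\]
while $f_R(x)=\tfrac1{2\pi}\int e^{ixR\xi}g(\xi)\,d\xi$. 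The decisive choice is $t_R=R^{-1/\alpha}$, the scale at which the curve displacement $t_R^\alpha=R^{-1}$ matches the inverse frequency: then $t_R^\alpha R=1$, while $t_RR^2=R^{\,2-1/\alpha}\to0$ since $0<\alpha<\tfrac12$.

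With this choice, for $|x|<cR^{-1}$ one has
\[
U_\gamma f_R(x,t_R)-f_R(x)=\frac1{2\pi}\int e^{ixR\xi}\bigl(e^{i(\xi+R^{2-1/\alpha}\xi^2)}-1\bigr)g(\xi)\,d\xi,
\]
and since $|e^{i\theta}-1|\le|\theta|$ the integrand differs from $e^{ixR\xi}(e^{i\xi}-1)g(\xi)$ by at most $R^{2-1/\alpha}\xi^2 g(\xi)$, so the expression converges as $R\to\infty$, uniformly for $|xR|\le c$, to $\Psi(xR)$ with $\Psi(u)=\tfrac1{2\pi}\int e^{iu\xi}(e^{i\xi}-1)g(\xi)\,d\xi$. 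The crux is that $\Psi(0)\ne0$: indeed $\Psi(0)=\tfrac1{2\pi}\bigl(\int e^{i\xi}g(\xi)\,d\xi-\int g(\xi)\,d\xi\bigr)$, and $\bigl|\int e^{i\xi}g(\xi)\,d\xi\bigr|<\int g=1$ strictly, because $g\ge0$ with $\int g=1$ is not a point mass while $\xi\mapsto e^{i\xi}$ is nonconstant on $\operatorname{supp}g$. Consequently there are $c,c_0>0$ with $|\Psi(u)|\ge2c_0$ for $|u|\le c$, and by the uniform convergence $|U_\gamma f_R(x,t_R)-f_R(x)|\ge c_0$ for all $|x|\le cR^{-1}$ once $R$ is large.

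Since $t_R^{-\delta}=R^{\delta/\alpha}$, this gives
\[
\Bigl\|\sup_{t\in(0,1)}\Bigl|\tfrac{U_\gamma f_R(x,t)-f_R(x)}{t^\delta}\Bigr|\Bigr\|_{L^2(B(0,1))}\ \ge\ c_0R^{\delta/\alpha}\,\bigl|\{x:|x|<cR^{-1}\}\bigr|^{1/2}\ \gtrsim\ R^{\delta/\alpha-1/2},
\]
so (\ref{strong}) forces $R^{\delta/\alpha-1/2}\lesssim\|f_R\|_{H^s}\lesssim R^{s-1/2}$ for all large $R$, hence $s\ge\delta/\alpha$. The only items needing care are the dominated-convergence passage (immediate from $t_RR^2\to0$) and the strict inequality $\bigl|\int e^{i\xi}g\bigr|<1$; the genuinely substantive point is identifying the critical time $t_R=R^{-1/\alpha}$ by balancing the curve displacement $t^\alpha$ against the wavelength $R^{-1}$, which is exactly the heuristic used for $e^{it(-\Delta)}$ in \cite{Li2021OnCP}.
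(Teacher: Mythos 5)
Your argument is correct, and it exploits exactly the same mechanism as the paper's proof: the critical time $t\sim R^{-1/\alpha}$ at which the curve displacement $t^{\alpha}$ matches the reciprocal of the top frequency of $f_R$, so that the phase increment $t^{\alpha}\eta$ is of unit size while the dispersive phase $t\eta^{2}\sim R^{2-1/\alpha}$ is negligible (this is where $\alpha<\tfrac12$ enters in both versions). Where you differ is in the test function and in how the lower bound is certified. The paper takes $\widehat{f_R}=\chi_{[R,R+1]}$, a unit-width band at frequency $R$ with $\|f_R\|_{H^s}\sim R^{s}$, and obtains $|U_\gamma f_R(x,t_0)-f_R(x)|\gtrsim c^{\alpha}$ on the fixed ball $B(0,c)$ by Taylor-expanding $e^{i(t_0^{\alpha}\xi+t_0\xi^{2})}-1$, keeping the $j=1$ term and absorbing the $j\ge 2$ tail. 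You instead recycle the low-frequency bump $\widehat{f_R}=\tfrac1R g(\cdot/R)$ from Theorem \ref{counterex3}, with $\|f_R\|_{H^s}\lesssim R^{s-1/2}$, and obtain the lower bound only on a ball of radius $\sim R^{-1}$, via uniform convergence to the limit profile $\Psi(xR)$ together with the strict triangle inequality $\bigl|\int e^{i\xi}g(\xi)\,d\xi\bigr|<\int g=1$ (valid since $g\ge 0$ is a nontrivial continuous function, so $e^{i\xi}$ is nonconstant on a set of positive $g\,d\xi$-measure). The two bookkeepings cancel to the same conclusion ($R^{\delta/\alpha-1/2}\lesssim R^{s-1/2}$ for you, $R^{\delta/\alpha}\lesssim R^{s}$ for the paper), so both are valid: the paper's version has the mild advantage that the bad set has measure independent of $R$ and no limit passage is needed, while yours avoids the tail estimate entirely because dominated convergence does the work, and the two points you flag as needing care are indeed the only ones and are easily justified.
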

		\begin{proof}
			For \( R \geq 1 \), we define
			$
			\widehat{f_{R}}(\xi)=\chi_{\left\{\xi: R \leq \xi\leq R+1\right\}}(\xi)
			$
			with $\left\|f_{R}\right\|_{H^{s}} \lesssim R^{s}$. By Taylor's formula, for arbitrary \( t \in(0,1) \), we get
			\[
			\begin{aligned}
				&\left|U_\gamma f_R (x,t)-f_{R}(x)\right|\\				&=\frac{1}{2\pi }\left|\int_{\left\{\xi: R \leq \xi\leq R+1\right\}} e^{i x \cdot \xi}\left(i {t^{\alpha}} \xi+i t \xi^{2}\right) d \xi+\sum_{j \geq 2} \frac{1}{j !} \int_{\left\{\xi: R \leq \xi\leq R+1\right\}} e^{i x \cdot \xi}\left(i {t^{\alpha}}\xi+i t \xi^{2}\right)^{j} d \xi\right| \\
				&\geq\frac{1}{2\pi } \left| \left|\int_{\left\{\xi: R \leq \xi\leq R+1\right\}} e^{i x \cdot \xi}\left(i {t^{\alpha}} \xi+i t \xi^{2}\right) d \xi\right|-\left| \sum_{j \geq 2} \frac{1}{j !} \int_{\left\{\xi: R \leq \xi\leq R+1\right\}} e^{i x \cdot \xi}\left(i {t^{\alpha}} \xi+i t \xi^{2}\right)^{j} d \xi\right|\right| .
			\end{aligned}
			\]
			We choose \( t_{0}=cR^{-\frac{1} { \alpha}} \). Here, $c$ is a sufficiently small constant. For each \( \xi \in \left\{\xi: R \leq \xi\leq R+1\right\} \), with large $R$, we have
			\[
			c^{\alpha} \leq {t_{0}^{\alpha}}\xi+t_{0} \xi^{2} \leq c^{\alpha} \frac{(R+1)}{R}+c\frac{(R+1)^{2}}{R^{1 / \alpha}}\leq 2c^{\alpha}.
			\]
			Therefore, we have
			\[
				\left|\int_{\left\{\xi: R \leq \xi\leq R+1\right\}} e^{i x  \xi}\left(i {t_{0}^{\alpha}}\xi+i t_{0} \xi^{2}\right) d \xi\right| \geq \frac{c^{\alpha}}{2}
			\]
			for any \( x \in B\left(0, c\right) \). In addition
			\[
			\begin{aligned}
				&\left|\sum_{j \geq 2} \frac{1}{j !} \int_{\left\{\xi: R \leq \xi\leq R+1\right\}} e^{i x \cdot \xi}\left(i {t_{0}^{\alpha}} \xi+i t_{0} \xi^{2}\right)^{j} d \xi\right| \\
				&\leq \sum_{j \geq 2} \frac{1}{j !} \int_{\left\{\xi: R \leq \xi\leq R+1\right\}}\left|{t_{0}^{\alpha}} \xi+t_{0} \xi^{2}\right|^{j} d \xi \\
				&\leq \sum_{j \geq 2} \frac{1}{j !}\left(2c^{\alpha}\right)^{j} \\
				&\leq \frac{c^{\alpha}}{4}
			\end{aligned}
			\]
			for each \( x \in B\left(0, c\right) \). Therefore, we have
			\[
			1\gtrsim R^{\frac{\delta}{\alpha}-s},
			\]
			Let $R \to \infty$, we can get $s\geq \frac{\delta}{\alpha}$.
		\end{proof}

		If we consider Schwartz function, it is easy to get the convergence rate $\delta$ cannot exceed $\alpha$. Since the proof is easy to get, we omit it.
		\begin{theorem}\label{counterex6}
			For the curve $\gamma(x, t)=x-e_{1} t^{\alpha} $ with \( 0<\alpha\leq 1 \)
			and for each Schwartz function \( f \), if
			\begin{equation}\label{schw}
				\lim _{t \rightarrow 0^{+}} \frac{U_\gamma f(x, t)-f(x)}{t^{\alpha}}=0 \text { a.e. } x \in \mathbb{R}^{d},
			\end{equation}
			then \( f \equiv 0 \).
		\end{theorem}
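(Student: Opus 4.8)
The plan is to show that, for a Schwartz $f$, the quotient in (\ref{schw}) converges pointwise to a fixed constant–coefficient differential operator applied to $f$, and that the vanishing of this operator forces $\widehat f$ to be supported on a Lebesgue–null set, hence $f\equiv 0$ by continuity of $\widehat f$.

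First I would pass to the Fourier side. Since $\gamma(x,t)\cdot\xi=x\cdot\xi-t^{\alpha}\xi_{1}$, we have
\[
\frac{U_{\gamma}f(x,t)-f(x)}{t^{\alpha}}=\frac{1}{(2\pi)^{d}}\int_{\mathbb{R}^{d}}e^{ix\cdot\xi}\,\frac{e^{i\phi_{t}(\xi)}-1}{t^{\alpha}}\,\widehat f(\xi)\,d\xi,\qquad \phi_{t}(\xi):=-t^{\alpha}\xi_{1}+t|\xi|^{2}.
\]
Using $|e^{iu}-1|\le|u|$ and $0<\alpha\le1$, for $0<t\le1$ the integrand is dominated by $(|\xi_{1}|+|\xi|^{2})|\widehat f(\xi)|$, which is integrable because $f$ is Schwartz; so dominated convergence applies. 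Since $\phi_{t}(\xi)/t^{\alpha}=-\xi_{1}+t^{1-\alpha}|\xi|^{2}$ and $(e^{iu}-1)/u\to i$ as $u\to0$, the pointwise limit of the integrand is $i(-\xi_{1})\widehat f(\xi)$ when $0<\alpha<1$ and $i(-\xi_{1}+|\xi|^{2})\widehat f(\xi)$ when $\alpha=1$. Hence, for every $x\in\mathbb{R}^{d}$,
\[
\lim_{t\to0^{+}}\frac{U_{\gamma}f(x,t)-f(x)}{t^{\alpha}}=Lf(x),\qquad Lf:=\begin{cases}-\partial_{x_{1}}f,& 0<\alpha<1,\\[1mm] -\partial_{x_{1}}f-i\Delta f,& \alpha=1.\end{cases}
\]

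Next, the hypothesis (\ref{schw}) gives $Lf=0$ almost everywhere; but $Lf$ is continuous (again because $f$ is Schwartz), so $Lf\equiv0$ on $\mathbb{R}^{d}$. Taking Fourier transforms, $\xi_{1}\widehat f(\xi)\equiv0$ when $0<\alpha<1$, and $(|\xi|^{2}-\xi_{1})\widehat f(\xi)\equiv0$ when $\alpha=1$. Thus $\widehat f$ vanishes off the hyperplane $\{\xi_{1}=0\}$ in the first case and off the sphere $\{|\xi|^{2}=\xi_{1}\}=\{|\xi-\tfrac12 e_{1}|=\tfrac12\}$ in the second case. In either case $\widehat f$ is supported on a set of Lebesgue measure zero, and since $\widehat f$ is continuous this forces $\widehat f\equiv0$, hence $f\equiv0$.

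There is no serious obstacle here; the points needing a little care are (i) exhibiting a single $t$-independent integrable majorant valid uniformly for all $0<\alpha\le1$ — the exponent $1-\alpha\ge0$ is exactly what keeps $t^{1-\alpha}|\xi|^{2}$ bounded on $0<t\le1$; and (ii) in the borderline case $\alpha=1$, noting that the term $t|\xi|^{2}$ genuinely survives in the limit so $L$ picks up the Laplacian, and that the symbol $|\xi|^{2}-\xi_{1}$ still vanishes only on a null set. One should also record the elementary fact used at the end: a Schwartz function whose Fourier transform is supported on a set of measure zero is identically zero, which follows from continuity of $\widehat f$.
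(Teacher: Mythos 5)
Your argument is correct, and since the paper explicitly omits the proof of this theorem ("the proof is easy to get"), your computation supplies exactly the intended elementary argument: the quotient converges pointwise (by dominated convergence, using $1-\alpha\ge 0$ for the majorant) to $-\partial_{x_1}f$ when $0<\alpha<1$, respectively $-\partial_{x_1}f-i\Delta f$ when $\alpha=1$, so the a.e. vanishing forces the symbol $\xi_1$ (resp. $|\xi|^2-\xi_1$) times $\widehat f$ to vanish identically, and continuity of $\widehat f$ then gives $f\equiv 0$. All steps check out, including the observation that the zero set of the symbol is closed with dense complement, so continuity alone suffices to conclude.
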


		For the convenience of readers, the following table presents the counterexamples required to prove the sharpness of {Theorem \ref{theorem1}}, {Theorem \ref{theorem2}}, {Theorem \ref{theorem3}} and {Theorem \ref{theorem4}}.
		\begin{table}[ht]
			\begin{tabular}{m{4cm}<{\centering}|m{10cm}<{\centering}} 
				\hline   \textbf{Theorem} &\textbf{Counterexamples required} \\     
				\hline
				\footnotesize
				{Theorem \ref{theorem1}} &{Theorem \ref{counterex1}, Theorem \ref{counterex2}, Theorem \ref{counterex6}}  \\
				
				\hline
				\footnotesize
				{Theorem \ref{theorem2}} &{Theorem \ref{counterex4}, Theorem \ref{counterex2}, Theorem \ref{counterex6}}  \\
				
				\hline
				\footnotesize
				{Theorem \ref{theorem3}} &{Theorem \ref{counterex3}, Theorem \ref{counterex5}, Theorem \ref{counterex6}}  \\
				
				\hline
				\footnotesize
				{Theorem \ref{theorem4}} &{Theorem \ref{counterex4}, Theorem \ref{counterex3}, Theorem \ref{counterex5}, Theorem \ref{counterex6}}  \\
				
				\hline
			\end{tabular}   
		\end{table}
		
		\section{Convergence rate of fractional Schrödinger operator along curves}\label{chap5}

		The previous method can also be applied to handle the convergence rate of fractional Schrödinger operator along curves in $d=1$. Here, we will only provide results.

		\begin{theorem}\label{theorem5}
			Let $\delta \in[0,\alpha)$ and $m\in(0,1)$. For any curve $\gamma(x,t): \mathbb{R} \times[-1,1] \rightarrow \mathbb{R}$ satisfying $\gamma(x,0)=x$ which is bilipschitz continuous in \( x \), and $\alpha$-Hölder continuous in \( t \) with \(  \frac{1}{2}< \alpha \leq 1\), we have
			\begin{equation}\label{pt convergence in r1}
				\lim _{t \rightarrow 0} \frac{U_{\gamma}^m f(x, t)-f(x)}{t^\delta}=0 \quad \text { a.e. } x \in \mathbb{R},\quad \forall f \in H^{s}\left(\mathbb{R}\right)
			\end{equation}
			whenever 
			\[
			s>s(\delta)=\begin{cases}
				\frac{2-m}{4}   \quad  &0  \leq\delta\leq \frac{2\alpha-1}{4}, \\ 
				m\delta+\frac{1-m\alpha}{2}   \quad   &  \frac{2\alpha-1}{4}<\delta\leq \frac{\alpha}{2}.  \\
				\frac{\delta}{\alpha} \quad   &  \delta>\frac{\alpha}{2}.  \\
			\end{cases}
			\]
		\end{theorem}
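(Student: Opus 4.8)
\textbf{Proof proposal for Theorem \ref{theorem5}.} The plan is to imitate the proofs of Theorems \ref{theorem1}--\ref{theorem4} with the dispersion relation $|\xi|^{m}$, $0<m<1$, in place of $|\xi|^{2}$. By the Nikishin--Stein argument of Proposition \ref{steinmax} (which applies verbatim to $U_{\gamma}^{m}$), it suffices to prove the maximal estimate $\big\|\sup_{0<t<1}t^{-\delta}|U_{\gamma}^{m}f-f|\big\|_{L^{2}([-1,1])}\lesssim_{\varepsilon}\|f\|_{H^{s(\delta)+\varepsilon}(\mathbb{R})}$; after a Littlewood--Paley decomposition $f=\sum_{k\ge0}f_{k}$ and the triangle inequality this reduces, for $k\gg1$ with $\operatorname{supp}\widehat{f}_{k}\subset\{|\xi|\sim2^{k}\}$, to the single-frequency bound with constant $2^{k(s(\delta)+\varepsilon)}\|f_{k}\|_{2}$; the range $k\lesssim1$ is handled exactly as in (\ref{klesssim1})--(\ref{klesssim3}) using $|e^{it|\xi|^{m}}-1|\le t|\xi|^{m}$.

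For $k\gg1$ the relevant time thresholds at frequency $2^{k}$ are the curve scale $2^{-k/\alpha}$ (where $t^{\alpha}2^{k}\sim1$) and the dispersion scale $2^{-mk}$ (where $t\,2^{mk}\sim1$); since $m\alpha<1$ one has $2^{-k/\alpha}<2^{-mk}$, so I would split $t\in(0,1)$ as $I=(0,2^{-k/\alpha})$, $II=(2^{-k/\alpha},2^{-mk})$, $III=(2^{-mk},1)$. On $I$ both phases are $\lesssim1$: Taylor expanding $e^{it|\xi|^{m}}-1$ and $f_{k}(\gamma(x,t))-f_{k}(x)$ and bounding term by term exactly as in (\ref{firstterm2-2})--(\ref{firstterm2-3}), using $\sup_{t<2^{-k/\alpha}}t^{-\delta}(t^{\alpha}2^{k})^{l}\lesssim2^{k\delta/\alpha}$ and $\sup_{t<2^{-k/\alpha}}t^{-\delta}(t2^{mk})^{l}\lesssim2^{k\delta/\alpha}$ (valid since $m<1/\alpha$), gives $I\lesssim2^{k\delta/\alpha}\|f_{k}\|_{2}$. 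On $II$ the dispersion phase is still $\lesssim1$, so $U_{\gamma}^{m}f_{k}(x,t)$ coincides with $f_{k}(\gamma(x,t))$ up to a geometrically summable Taylor remainder contributing at most $2^{mk\delta}\|f_{k}\|_{2}$ after the $t^{-\delta}$; a dyadic decomposition $t\sim2^{-j}$, $mk\le j\le k/\alpha$, together with $|\gamma(x,t)-x|\lesssim t^{\alpha}$, the bilipschitz hypothesis, and the elementary bound $\big\|\sup_{|y|\le r}|f_{k}(\cdot-y)|\big\|_{L^{2}}\lesssim(1+r2^{k})^{1/2}\|f_{k}\|_{2}$ (fundamental theorem of calculus and Cauchy--Schwarz), yields $II\lesssim\sum_{mk\le j\le k/\alpha}2^{j\delta+(k-\alpha j)/2}\|f_{k}\|_{2}$, whose maximum over the $O(k)$ terms is $2^{k(m\delta+\frac{1-m\alpha}{2})}$ for $\delta\le\frac{\alpha}{2}$ and $2^{k\delta/\alpha}$ for $\delta\ge\frac{\alpha}{2}$.

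On $III$ the evolution is genuinely dispersive, and the main ingredient is the fractional counterpart of Lemmas \ref{proposition2}--\ref{proposition4}: a local-in-time maximal estimate
\[
\Big\|\sup_{t\in(0,2^{-j})}|U_{\gamma}^{m}f_{k}|\Big\|_{L^{2}([-1,1])}\lesssim_{\varepsilon}2^{\frac{2-m}{4}k+\frac{1-2\alpha}{4}j+\varepsilon k}\|f_{k}\|_{2},\qquad 0\le j\le mk,
\]
which at $j=0$ is the global fractional Schrödinger--along--curves bound of \cite{MR4307013,MR4359958} with exponent $\max\{\tfrac{2-m}{4},\tfrac{1-m\alpha}{2}\}=\tfrac{2-m}{4}$ (the point at which $\alpha>\tfrac12$ is used) and which at $j=mk$ is the curve-displacement bound $2^{\frac{1-m\alpha}{2}k}\|f_{k}\|_{2}$; I would establish it by running the square-function / $TT^{*}$ argument of \cite{Wang-Wang} with the phase $t|\xi|^{m}$ in place of $t|\xi|^{2}$. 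Pairing it with the dyadic factor $2^{j\delta}$, $0\le j\le mk$, gives $III\lesssim_{\varepsilon}2^{\max\{\frac{2-m}{4},\,m\delta+\frac{1-m\alpha}{2}\}k+\varepsilon k}\|f_{k}\|_{2}$, the first exponent dominating for $\delta\le\frac{2\alpha-1}{4}$ and the second for $\delta\ge\frac{2\alpha-1}{4}$. Adding $I+II+III$, absorbing the $O(k)$ losses into $2^{\varepsilon k}$, and comparing the three exponents $\delta/\alpha$, $m\delta+\frac{1-m\alpha}{2}$, $\frac{2-m}{4}$ shows that the largest of them is exactly $s(\delta)$ on each of the three $\delta$-ranges, with adjacent branches matching at $\delta=\frac{2\alpha-1}{4}$ and $\delta=\frac{\alpha}{2}$; summing in $k$ after loading the loss into $H^{s(\delta)+\varepsilon}$ finishes the argument.

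The step I expect to be the main obstacle is the local-in-time fractional maximal estimate on $III$. Its two endpoint values — the sharp global bound of \cite{MR4307013,MR4359958} at $j=0$ and the curve-displacement bound at $j=mk$ — pin down the linear exponent $\tfrac{2-m}{4}k+\tfrac{1-2\alpha}{4}j$, but turning this into a proof requires pushing the machinery of \cite{Wang-Wang} through for $|\xi|^{m}$ with $0<m<1$, in particular controlling the interaction of the dispersive oscillation with the large curve displacement $|\gamma(x,t)-x|\sim t^{\alpha}$ when $\tfrac12<\alpha\le1$. Everything else is the by-now-routine bookkeeping of the three time windows against the three ranges of $\delta$, parallel to the proofs of Theorems \ref{theorem2}--\ref{theorem4}.
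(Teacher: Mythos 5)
The paper gives no proof of Theorem \ref{theorem5} at all --- it declares the argument analogous to the $m=2$ case and omits it --- so the only meaningful comparison is with the proofs of Theorems \ref{theorem2}--\ref{theorem4}, and your outline reproduces that scheme faithfully: the same Littlewood--Paley decomposition, the same three time windows cut at the curve scale $2^{-k/\alpha}$ and the dispersion scale $2^{-mk}$, Taylor expansion on the small windows, and a dyadic pairing of $2^{j\delta}$ against a local-in-time maximal estimate on the dispersive window, with exponent bookkeeping that correctly recovers $s(\delta)$ and the breakpoints $\delta=\tfrac{2\alpha-1}{4}$ and $\delta=\tfrac{\alpha}{2}$. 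The one ingredient you rightly flag as unestablished --- the local maximal bound $\bigl\|\sup_{t\in(0,2^{-j})}|U_{\gamma}^{m}f_{k}|\bigr\|_{L^{2}}\lesssim 2^{\frac{2-m}{4}k+\frac{1-2\alpha}{4}j+\varepsilon k}\|f_{k}\|_{2}$ --- is precisely the counterpart of what the paper imports from \cite{Wang-Wang} as Lemmas \ref{proposition2}--\ref{proposition4} in the $m=2$ case, so your proposal is no less complete than the paper itself on this point.
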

		
		\begin{theorem}\label{theorem6}
			Let $\delta,m$ satisfy the assumptions in Theorem \ref{theorem5}.  For any curve $\gamma(x,t): \mathbb{R} \times[-1,1] \rightarrow \mathbb{R}$ satisfying $\gamma(x,0)=x$ which is bilipschitz continuous in \( x \), and $\alpha$-Hölder continuous in \( t \) with \(  0< \alpha \leq \frac{1}{2}\), we have (\ref{pt convergence in r1}) whenever
			\[
			s>s(\delta)=\begin{cases}
				m\delta+\frac{1-m\alpha}{2}    \quad  &0  \leq \delta\leq \frac{\alpha}{2}, \\ 
				\frac{\delta}{\alpha} \quad   &  \delta>\frac{\alpha}{2}.  \\
			\end{cases}
			\]. 
		\end{theorem}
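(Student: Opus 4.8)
The plan is to transcribe the proof of Theorem \ref{theorem3}, replacing the dispersion relation $|\xi|^2$ by $|\xi|^m$ throughout. By the Nikishin--Stein reduction used for Proposition \ref{steinmax}, it suffices to establish, for every $\varepsilon>0$, the maximal estimate
\[
\left\|\sup_{0<t<1}\frac{|U_\gamma^m f(x,t)-f(x)|}{t^\delta}\right\|_{L^2([-1,1])}\lesssim_\varepsilon\|f\|_{H^{s(\delta)+\varepsilon}(\mathbb{R})}.
\]
Decomposing $f=\sum_{k\ge0}f_k$ into Littlewood--Paley pieces with $\operatorname{supp}\widehat{f_k}\subset\{|\xi|\sim2^k\}$, the pieces with $k\lesssim1$ are handled exactly as in (\ref{klesssim1})--(\ref{klesssim3}), since $|e^{it|\xi|^m}-1|\le t|\xi|^m$ and $0<m<1<\tfrac1\delta$. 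For $k\gg1$ I would split the time interval at the two scales where the two parts of the phase saturate: $t\sim2^{-k/\alpha}$, where the curve displacement $t^\alpha|\xi|\sim t^\alpha2^k$ reaches size $1$, and $t\sim2^{-mk}$, where the dispersive phase $t|\xi|^m\sim t2^{mk}$ reaches size $1$. Since $0<\alpha\le\tfrac12$ and $0<m<1$ give $m\alpha<1$, we have $2^{-k/\alpha}\le2^{-mk}$, and this yields three ranges $(0,2^{-k/\alpha})$, $[2^{-k/\alpha},2^{-mk})$, $[2^{-mk},1)$, whose contributions we denote $I,II,III$ as in (\ref{divisonalpha}).

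On the first range one proceeds as in (\ref{firstterm2-1})--(\ref{firstterm2-3}): write $U_\gamma^m f_k-f_k=(U_\gamma^m f_k-f_k(\gamma(x,t)))+(f_k(\gamma(x,t))-f_k(x))$, Taylor-expand $e^{it|\xi|^m}-1$ and $e^{i(\gamma(x,t)-x)\cdot\xi}-1$ respectively (both series converge since $t|\xi|^m\lesssim2^{(m-1/\alpha)k}\ll1$ and $|\gamma(x,t)-x|\,|\xi|\lesssim t^\alpha2^k\lesssim1$ on this range), and apply the periodization device of Lemma 5.1 in \cite{MR4297035} to the translated exponentials; using $t^{-\delta}\le2^{\delta k/\alpha}$ this gives $I\lesssim2^{\delta k/\alpha}\|f_k\|_2$. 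On the third range $t^{-\delta}\le2^{m\delta k}$, and since $\gamma(\cdot,t)$ is bilipschitz the composition is harmless, so one invokes the global maximal bound for $U_\gamma^m$ in the tangential regime $0<\alpha\le\tfrac12$, namely $\big\|\sup_{0<t<1}|U_\gamma^m f_k|\big\|_{L^2([-1,1])}\lesssim_\varepsilon2^{(\frac{1-m\alpha}{2}+\varepsilon)k}\|f_k\|_2$ (the estimate underlying the sharp a.e. convergence result of Cho--Shiraki \cite{MR4307013} and Yuan--Zhao \cite{MR4359958} quoted in the introduction), which yields $III\lesssim_\varepsilon2^{k(m\delta+\frac{1-m\alpha}{2}+\varepsilon)}\|f_k\|_2$.

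For the middle range, decompose dyadically $[2^{-k/\alpha},2^{-mk})=\bigcup_{mk\le j\le k/\alpha}[2^{-j-1},2^{-j})$, bound $t^{-\delta}\le2^{(j+1)\delta}$ on each piece, and apply the fractional analogue of Lemma \ref{proposition3} from \cite{Wang-Wang}, which I expect to read $\big\|\sup_{t\in(0,2^{-j})}|U_\gamma^m f_k|\big\|_{L^2([-1,1])}\lesssim2^{\frac12(k-\alpha j)}\|f_k\|_2$ for $mk\le j\le k/\alpha$; summing the geometric series $\sum_j2^{j(\delta-\alpha/2)+k/2}$ (up to a factor $k\lesssim_\varepsilon2^{\varepsilon k}$) gives $2^{k(m\delta+\frac{1-m\alpha}{2})}\|f_k\|_2$ when $\delta<\tfrac{\alpha}{2}$ (the sum is dominated by $j=mk$) and $2^{k\delta/\alpha}\|f_k\|_2$ when $\delta\ge\tfrac{\alpha}{2}$ (dominated by $j=k/\alpha$). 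Since $\delta/\alpha\le m\delta+\frac{1-m\alpha}{2}$ on $[0,\tfrac{\alpha}{2}]$ and the reverse holds on $[\tfrac{\alpha}{2},\alpha)$, combining $I,II,III$ gives $\big\|\sup_{0<t<1}\frac{|U_\gamma^m f_k-f_k|}{t^\delta}\big\|_{L^2([-1,1])}\lesssim_\varepsilon2^{k(s(\delta)+\varepsilon)}\|f_k\|_2$, and summing over $k$ against $\|f\|_{H^{s(\delta)+2\varepsilon}}$ finishes the proof. The one genuinely new ingredient is the fractional local-smoothing maximal estimate just quoted: everything else is a line-by-line copy of the $m=2$ argument, but proving $\big\|\sup_{t\in(0,2^{-j})}|U_\gamma^m f|\big\|_{L^2}\lesssim2^{\frac12(k-\alpha j)}\|f\|_2$ for $0<m<1$ (together with its $j\le mk$ counterpart) requires redoing the $TT^*$/stationary-phase analysis of \cite{Wang-Wang} with the non-standard phase $t|\xi|^m$, which is where the tangential curve and the fractional dispersion interact and is the main obstacle.
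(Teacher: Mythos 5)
Your proposal is correct and coincides with the paper's intended argument: the paper omits the proof of Theorem \ref{theorem6} entirely, stating only that it is "similar to the case $m=2$," and your write-up is exactly that adaptation of the proof of Theorem \ref{theorem3} (time-splitting at $2^{-k/\alpha}$ and $2^{-mk}$, Taylor expansion on the innermost range, dyadic summation against the local maximal bound in the middle, global maximal bound outside), with the exponent bookkeeping checking out against $s(\delta)$ in both regimes. The one ingredient you cannot copy verbatim --- the fractional analogue $\|\sup_{t\in(0,2^{-j})}|U_\gamma^m f|\|_{L^2}\lesssim 2^{\frac12(k-\alpha j)}\|f\|_2$ for $mk\le j\le k/\alpha$ of Lemma \ref{proposition1} (i.e.\ of the estimates quoted from \cite{Wang-Wang}) --- is correctly identified, and the form you posit is the right one, matching the trivial bound at $j=k/\alpha$ and the Cho--Shiraki/Yuan--Zhao exponent $\frac{1-m\alpha}{2}$ at $j=mk$.
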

		
		\begin{theorem}\label{theorem7}
			Let $\delta \in[0,\alpha)$ and $m\in(1,\infty )$. For any curve $\gamma(x,t): \mathbb{R} \times[-1,1] \rightarrow \mathbb{R}$ satisfying $\gamma(x,0)=x$ which is bilipschitz continuous in \( x \), and $\alpha$-Hölder continuous in \( t \) with \(  \frac{1}{m} \leq \alpha \leq 1  \), we have (\ref{pt convergence in r1}) whenever  
			\[
			s>s(\delta)=\begin{cases}
				(m-1)\delta+\frac{1}{4}    \quad  &0  \leq \delta\leq \frac{1}{4}, \\ 
				m\delta \quad   &  \delta>\frac{1}{4}.  \\
			\end{cases}
			\]. 
		\end{theorem}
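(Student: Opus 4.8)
\textbf{Plan of proof of Theorem \ref{theorem7}.} The idea is to repeat the proof of Theorem \ref{theorem2} with $2$ replaced by $m$ throughout. By the Nikishin--Stein reduction used in \textbf{\ref{chap3}} --- density of Schwartz functions, for which $\tfrac{U^m_\gamma f-f}{t^\delta}\to 0$ everywhere because $|U^m_\gamma f-f(\gamma(\cdot,t))|\lesssim t$ and $|f(\gamma(\cdot,t))-f|\lesssim t^\alpha$ while $\delta<\alpha\le 1$ --- it suffices to prove, for every $\varepsilon>0$,
\[
\left\|\sup_{0<t<1}\frac{|U^m_\gamma f(x,t)-f(x)|}{t^\delta}\right\|_{L^2([-1,1])}\lesssim_{\varepsilon}\|f\|_{H^{s(\delta)+C\varepsilon}(\mathbb{R})}.
\]
I would then take a Littlewood--Paley decomposition $f=\sum_{k\ge 0}f_k$ with $\operatorname{supp}\widehat f_k\subset\{|\xi|\sim 2^k\}$ for $k\ge 1$ and estimate the pieces separately; the low frequencies $k\lesssim 1$ are dominated by $\|f\|_{H^{s(\delta)+C\varepsilon}}$ exactly as in (\ref{klesssim1})--(\ref{klesssim3}), with $|\xi|^2$ replaced by $|\xi|^m$.

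For $k\gg 1$ I would use two estimates. The first is the fractional analogue of Lemma \ref{proposition2} from \cite{Wang-Wang}, which in the present regime $m>1$, $\tfrac1m\le\alpha\le 1$ I expect to take the form
\[
\left\|\sup_{t\in(0,2^{-j})}|U^m_\gamma f(\cdot,t)|\right\|_{L^2([-1,1])}\lesssim_{\varepsilon}2^{\frac{mk-j}{4}+\varepsilon k}\|f\|_2,\qquad (m-1)k\le j\le mk,
\]
for $\operatorname{supp}\widehat f\subset\{|\xi|\sim 2^k\}$; this interpolates between the trivial endpoint $j=mk$ (where $t|\xi|^m\lesssim 1$) and $j=(m-1)k$, where the exponent $\tfrac14$ is the sharp one-dimensional global exponent. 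The second is precisely that global maximal estimate, $\|\sup_{0<t<1}|U^m_\gamma f|\|_{L^2([-1,1])}\lesssim_{\varepsilon}2^{(\frac14+\varepsilon)k}\|f\|_2$, which holds here because $\alpha\ge\tfrac1m$ forces $\tfrac{1-m\alpha}{2}\le\tfrac14$ (Cho--Shiraki \cite{MR4307013}, Yuan--Zhao \cite{MR4359958}).

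Next I would split $(0,1)=(0,2^{-mk})\cup[2^{-mk},2^{-(m-1)k})\cup[2^{-(m-1)k},1)=:I\cup II\cup III$. On $I$ both phases are $O(1)$: $t|\xi|^m\lesssim 1$ since $t<2^{-mk}$, and $|\gamma(x,t)-x|\,|\xi|\lesssim t^\alpha 2^k\lesssim 2^{(1-m\alpha)k}\le 1$ since $m\alpha\ge 1$. Taylor-expanding $e^{it|\xi|^m}-1$ and $e^{i(\gamma(x,t)-x)\cdot\xi}-1$ term by term, using $t^{-\delta}\le 2^{m\delta k}$, the Sobolev-sampling device of Lemma 5.1 in \cite{MR4297035} to remove $\sup_t$ from the $U^m_\gamma f_k-f_k(\gamma(\cdot,t))$ piece, Plancherel, and convergence of $\sum_j\tfrac{C^j 2^{(1-m\alpha)kj}}{j!}$, yields $I\lesssim 2^{m\delta k}\|f_k\|_2$. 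On $III$, $t^{-\delta}\le 2^{(m-1)\delta k}$, so the global estimate gives $III\lesssim_{\varepsilon}2^{((m-1)\delta+\frac14+\varepsilon)k}\|f_k\|_2$. On $II$, I would decompose dyadically into $t\in[2^{-j-1},2^{-j})$ with $(m-1)k\le j\le mk$, bound $\sup_t t^{-\delta}\lesssim 2^{j\delta}$ and $\sup_{[2^{-j-1},2^{-j})}|U^m_\gamma f_k|\le\sup_{(0,2^{-j})}|U^m_\gamma f_k|$, and apply the refined estimate; this leaves $\bigl(\sum_j 2^{j(\delta-\frac14)+\frac{mk}{4}}+2^{m\delta k}\bigr)\|f_k\|_2$, the geometric sum being dominated by $j=(m-1)k$ when $\delta<\tfrac14$ (giving $2^{((m-1)\delta+\frac14)k}$) and by $j=mk$ when $\delta>\tfrac14$ (giving $2^{m\delta k}$). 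Since $m\delta\le(m-1)\delta+\tfrac14$ exactly when $\delta\le\tfrac14$, the three parts combine to $I+II+III\lesssim_{\varepsilon}2^{(s(\delta)+C\varepsilon)k}\|f_k\|_2$, and Cauchy--Schwarz in $k$ then closes the estimate whenever $s>s(\delta)$.

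The only substantive point is the refined lemma from \cite{Wang-Wang}: this is where the sharp $d=1$ numerology $\tfrac{d}{2(d+1)}=\tfrac14$ (Du--Zhang type) must be carried over to the dispersion $|\xi|^m$ along a tangential curve, and where one must verify that the correct window is $(m-1)k\le j\le mk$ with exponent $\tfrac{mk-j}{4}$ --- dictated by matching the dispersive time scale $t\sim 2^{-mk}$ to the scale $2^{-(m-1)k}$ below which the global maximal bound already suffices. Every remaining step is the $m=2$ computation in the proof of Theorem \ref{theorem2}, unchanged except for the replacements $2\mapsto m$ and $2\alpha\ge 1\mapsto m\alpha\ge 1$.
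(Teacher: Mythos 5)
Your plan is correct and is exactly the argument the paper intends: the authors omit the proof of Theorem \ref{theorem7}, stating only that it is "similar to the case $m=2$," and your adaptation of the proof of Theorem \ref{theorem2} --- the time splitting at $2^{-mk}$ and $2^{-(m-1)k}$, the Taylor-expansion bound $2^{m\delta k}$ on the short-time piece (using $m\alpha\ge 1$), the global $2^{k/4}$ bound on the long-time piece, and the dyadic summation with the local estimate $2^{(mk-j)/4}$ --- reproduces the stated threshold $s(\delta)$ precisely. The only input you do not verify is the fractional analogue of the Wang--Wang lemma, which you correctly identify and whose form is consistent with the $m=2$ case and with the paper's own reliance on \cite{Wang-Wang}.
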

		
		\begin{theorem}\label{theorem8}
			Let $\delta,m$ satisfy the assumptions in Theorem \ref{theorem7}.  For any curve $\gamma(x,t): \mathbb{R} \times[-1,1] \rightarrow \mathbb{R}$ satisfying $\gamma(x,0)=x$ which is bilipschitz continuous in \( x \), and $\alpha$-Hölder continuous in \( t \) with \( 0< \alpha \leq  \frac{1}{2m}   \), we have (\ref{pt convergence in r1}) whenever 
			\[
			s>s(\delta)=\begin{cases}
				m\delta+\frac{1-m\alpha}{2}    \quad  &0  \leq \delta\leq \frac{\alpha}{2}, \\ 
				\frac{\delta}{\delta} \quad   &  \delta>\frac{\alpha}{2}.  \\
			\end{cases}
			\]. 
		\end{theorem}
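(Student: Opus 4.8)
The plan is to run the argument of Theorem \ref{theorem3} with the dispersive phase $t|\xi|^2$ replaced by $t|\xi|^m$ and the time breakpoint $2^{-2k}$ replaced by $2^{-mk}$, invoking the fractional analogues of the maximal estimates of \cite{Wang-Wang}. By the Nikishin--Stein reduction of Proposition \ref{steinmax} (in its $d=1$ form) together with a Littlewood--Paley decomposition $f=\sum_{k\ge0}f_k$, $\operatorname{supp}\hat f_k\subset\{|\xi|\sim 2^k\}$, it suffices to show, for $0\le\delta<\alpha$ and every $\varepsilon>0$,
\[
\left\|\sup_{0<t<1}\frac{|U_\gamma^m f_k(x,t)-f_k(x)|}{t^\delta}\right\|_{L^2([-1,1])}\lesssim_\varepsilon 2^{(s(\delta)+\varepsilon)k}\|f_k\|_{L^2(\mathbb{R})}\qquad(k\gg1),
\]
and then to sum in $k$ by Cauchy--Schwarz; the range $k\lesssim1$ is handled exactly as in (\ref{klesssim3}) via the mean value theorem.

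For $k\gg1$, since $\alpha\le\frac1{2m}$ forces $\frac1\alpha\ge2m>m$, the intervals $(0,2^{-k/\alpha})$, $(2^{-k/\alpha},2^{-mk})$, $(2^{-mk},1)$ are correctly ordered, and I would split the supremum as $I+II+III$ over them, mirroring (\ref{divisonalpha}). On $(0,2^{-k/\alpha})$ both $e^{it|\xi|^m}-1$ and $e^{i(\gamma(x,t)-x)\cdot\xi}-1$ are small: Taylor expanding these exponents, using $|\gamma(x,t)-x|\lesssim t^\alpha$, and applying the translation trick (Lemma 5.1 of \cite{MR4297035}) as in (\ref{firstterm2-2})--(\ref{firstterm2-3}) gives $I\lesssim 2^{\delta k/\alpha}\|f_k\|_2$, the $j$-series converging because $m-\frac1\alpha<0$. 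On $(2^{-mk},1)$ one uses $t^{-\delta}\le2^{mk\delta}$ together with the global estimate $\|\sup_{0<t<1}|U_\gamma^m f_k|\|_{L^2([-1,1])}\lesssim 2^{\frac{1-m\alpha}{2}k}\|f_k\|_2$ (the convergence, i.e.\ rate $0$, bound of Cho--Shiraki \cite{MR4307013} and Yuan--Zhao \cite{MR4359958}, whose exponent $\max\{\frac14,\frac{1-m\alpha}{2}\}$ equals $\frac{1-m\alpha}{2}$ since $\alpha\le\frac1{2m}$), yielding $III\lesssim 2^{(m\delta+\frac{1-m\alpha}{2})k}\|f_k\|_2$. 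On $(2^{-k/\alpha},2^{-mk})$ I would dyadically decompose $t\sim2^{-j}$, $mk\le j\le k/\alpha$, and apply the fractional analogue of Lemma \ref{proposition3}, $\|\sup_{t\in(0,2^{-j})}|U_\gamma^m f_k|\|_{L^2([-1,1])}\lesssim 2^{\frac12(k-\alpha j)}\|f_k\|_2$; then $\sum_{mk\le j\le k/\alpha}2^{j\delta}2^{\frac12(k-\alpha j)}$ is a geometric series controlled by its $j=mk$ endpoint (giving $2^{(m\delta+\frac{1-m\alpha}{2})k}$) when $\delta<\frac\alpha2$, and by its $j=k/\alpha$ endpoint (giving $2^{\delta k/\alpha}$) when $\delta>\frac\alpha2$, with an extra factor $k\lesssim_\varepsilon2^{\varepsilon k}$ at $\delta=\frac\alpha2$. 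Taking the maximum of the three exponents recovers $s(\delta)$; one checks $\frac\delta\alpha\le m\delta+\frac{1-m\alpha}{2}\iff\delta\le\frac\alpha2$, so $I$ never beats the stated bound.

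The main obstacle is establishing the two maximal estimates that replace Lemma \ref{proposition3} and its global companion, uniformly over all admissible curves $\gamma$. After rescaling $t\mapsto2^{-j}t$ on $(0,2^{-j})$ the dispersive phase becomes $\sim2^{mk-j}|\xi|^m$ and the curve displacement becomes $\lesssim2^{-\alpha j}$; for $mk\le j$ one has $2^{mk-j}\le1$ only at the left endpoint rather than throughout, so some care with the local-smoothing/square-function machinery of \cite{Wang-Wang} is required, but the one-dimensional $L^2$ theory for $e^{it(-\Delta)^{m/2}}$ with $m>1$ behaves like that for $e^{it\Delta}$, and the $m=2$ proof should transfer after this scaling. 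Granting these two lemmas, the remainder is the routine Taylor-expansion and geometric-series bookkeeping sketched above, structurally identical to the proof of Theorem \ref{theorem3}.
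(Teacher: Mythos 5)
The paper gives no proof of this theorem at all --- it states in Section \ref{chap5} that the fractional case is ``similar to the case $m=2$'' and omits the argument --- and your sketch is precisely that adaptation of the proof of Theorem \ref{theorem3}: breakpoints $2^{-k/\alpha}$ and $2^{-mk}$, Taylor expansion for $I$, dyadic summation against the fractional local maximal estimate for $II$, and the global Cho--Shiraki/Yuan--Zhao bound for $III$, correctly reproducing the stated $s(\delta)$ (and implicitly correcting the paper's typo $\frac{\delta}{\delta}$ to $\frac{\delta}{\alpha}$). The only ingredients you leave unproved, the fractional analogues of the \cite{Wang-Wang} maximal lemmas, are exactly what the paper also takes for granted, so your proposal matches the intended proof.
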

		
		\begin{theorem}\label{theorem9}
			Let $\delta,m$ satisfy the assumptions in Theorem \ref{theorem7}.  For any curve $\gamma(x,t): \mathbb{R} \times[-1,1] \rightarrow \mathbb{R}$ satisfying $\gamma(x,0)=x$ which is bilipschitz continuous in \( x \), and $\alpha$-Hölder continuous in \( t \) with \(  \frac{1}{2m} < \alpha <\min\left\{ \frac{1}{2},\frac{1}{m} \right\} \), we  have (\ref{pt convergence in r1}) whenever\[
			s>s(\delta)=\begin{cases}
				(m-1)\delta+\frac{1}{4}    \quad  &0  \leq \delta\leq \frac{2m\alpha-1}{4}, \\ 
				m\delta+\frac{1-m\alpha}{2}   \quad  &\frac{2m\alpha-1}{4}  <\delta\leq \frac{\alpha}{2}, \\ 
				\frac{\delta}{\alpha} \quad   &  \delta>\frac{\alpha}{2}.  \\
			\end{cases}
			\]. 
		\end{theorem}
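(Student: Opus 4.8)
\textbf{Proof proposal for Theorem \ref{theorem9}.}
The plan is to transcribe, essentially line by line, the proof of Theorem \ref{theorem4}, with the quadratic phase $|\xi|^2$ replaced everywhere by $|\xi|^m$ and with the maximal-function inputs of \cite{Wang-Wang} replaced by their fractional counterparts.

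First I would pass, via the Nikishin--Stein argument of Proposition \ref{steinmax}, from the pointwise statement \eqref{pt convergence in r1} to the maximal estimate
\[
\Big\|\sup_{0<t<1}\frac{|U_\gamma^m f(x,t)-f(x)|}{t^{\delta}}\Big\|_{L^2([-1,1])}\lesssim_{\varepsilon}\|f\|_{H^{s(\delta)+\varepsilon}(\mathbb{R})},\qquad \varepsilon>0.
\]
After a Littlewood--Paley decomposition $f=\sum_{k\ge0}f_k$ with $\widehat{f_k}$ supported in $\{|\xi|\sim2^k\}$, the part $k\lesssim1$ is disposed of by the mean value theorem exactly as in \eqref{klesssim1}--\eqref{klesssim3}, so the task reduces to proving $\big\|\sup_{0<t<1}t^{-\delta}|U_\gamma^m f_k(x,t)-f_k(x)|\big\|_{L^2([-1,1])}\lesssim_{\varepsilon}2^{k(s(\delta)+\varepsilon)}\|f_k\|_2$ for $k\gg1$ and then summing a geometric series in $k$.

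For fixed $k\gg1$ I would split $(0,1)=(0,2^{-k/\alpha})\cup[2^{-k/\alpha},2^{-k})\cup[2^{-k},1)=:I\cup II\cup III$; the hypothesis $\tfrac1{2m}<\alpha<\min\{\tfrac12,\tfrac1m\}$ forces the ordering $k<2m\alpha k<mk<\tfrac{k}{\alpha}$, which is what is needed in $II$. On $I$ one has $t^{\alpha}2^k\lesssim1$ and $t\,2^{mk}\lesssim1$, so Taylor-expanding $e^{it|\xi|^m}-1$ and $f_k(\gamma(x,t))-f_k(x)$ and handling $e^{i\gamma(x,t)\cdot\xi}$ by the kernel decomposition of Lemma~5.1 in \cite{MR4297035}, exactly as in \eqref{firstterm2-1}--\eqref{firstterm2-3}, gives $I\lesssim2^{k\delta/\alpha}\|f_k\|_2$. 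On $III$ one extracts $t^{-\delta}\le2^{k\delta}$ and invokes the global fractional propagator maximal bound (exponent $\tfrac14$ in $d=1$ for $m>1$, transferred to $U_\gamma^m$ since $x\mapsto\gamma(x,t)$ is a bounded bilipschitz map for $t\in(0,1)$), giving $III\lesssim2^{k(\delta+1/4)}\|f_k\|_2$. On $II$ one decomposes dyadically in time, $t\in[2^{-j-1},2^{-j})$ with $k\le j\le\tfrac{k}{\alpha}$, so that
\[
II\lesssim\sum_{k\le j\le k/\alpha}2^{j\delta}\Big\|\sup_{0<t<2^{-j}}|U_\gamma^m f_k|\Big\|_{L^2([-1,1])}+2^{k\delta/\alpha}\|f_k\|_2,
\]
and one feeds in the fractional analogue of Lemma \ref{proposition4} (obtained by the methods of \cite{Wang-Wang}): a piecewise bound for $\big\|\sup_{0<t<2^{-j}}|U_\gamma^m f|\big\|_{L^2([-1,1])}$ with three ranges -- the purely translational bound $\sim2^{\frac12(k-\alpha j)}\|f\|_2$ on $mk<j\le\tfrac{k}{\alpha}$, a $j$-independent plateau $\sim2^{\frac{1-m\alpha}{2}k}\|f\|_2$ on $2m\alpha k<j\le mk$, and a fractional Carleson-type bound $\sim2^{\frac14(2k-j)}\|f\|_2$ on $k\le j\le2m\alpha k$, the junctions $j=2m\alpha k$ and $j=mk$ being exactly the points where consecutive pieces agree. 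Summing the three geometric subsums (each controlled at an endpoint according to the sign of its $j$-exponent, the $\sim k$ many terms absorbed into $2^{\varepsilon k}$) and taking the maximum against $I$ and $III$ yields the stated three-piece $s(\delta)$, the break points $\delta=\tfrac{2m\alpha-1}{4}$ and $\delta=\tfrac{\alpha}{2}$ arising from the comparison; \eqref{pt convergence in r1} then follows by density of Schwartz functions, for which the limit is immediate (cf.\ Theorem \ref{counterex6}).

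The main obstacle is the fractional analogue of Lemma \ref{proposition4}: establishing the short-time maximal estimate for $U_\gamma^m$ along $\alpha$-H\"older curves with the correct piecewise exponents -- in particular the Carleson-type regime on $k\le j\le2m\alpha k$ and the height $2^{\frac{1-m\alpha}{2}k}$ and location of the plateau -- requires carrying the local-smoothing / polynomial-partitioning arguments of \cite{Wang-Wang} over from the quadratic phase to $|\xi|^m$ and checking that the three regimes match at $j=2m\alpha k$ and $j=mk$; everything else is, as the authors indicate, a routine transcription of the proof of Theorem \ref{theorem4}.
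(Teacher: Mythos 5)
Your overall architecture (Littlewood--Paley decomposition, a three-fold time splitting, dyadic decomposition in $t$ fed into a time-localized maximal lemma, then geometric summation in $j$ and $k$) is exactly what the paper intends; in fact the paper omits the proof of Theorem \ref{theorem9} altogether, saying only that it is similar to the case $m=2$. The problem is that you transcribe the $m=2$ thresholds where they must be $m$-dependent, and your exponents do not reproduce the stated $s(\delta)$. The boundary between the dyadic region $II$ and the ``global'' region $III$ must be $t=2^{-(m-1)k}$ --- the time a wave packet of frequency $2^k$ and group velocity $|\xi|^{m-1}\sim 2^{(m-1)k}$ needs to cross the unit interval --- not $t=2^{-k}$. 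With your splitting, $III\lesssim 2^{k(\delta+\frac14)}\|f_k\|_2$, and for $1<m<2$ one has $\delta+\frac14>(m-1)\delta+\frac14$, so region $III$ alone already overshoots the claimed first regime; splitting at $2^{-(m-1)k}$ gives $III\lesssim 2^{k((m-1)\delta+\frac14)}\|f_k\|_2$, which is what the theorem needs.

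Correspondingly, the Carleson-type piece of the fractional analogue of Lemma \ref{proposition4} should read $2^{\frac14(mk-j)}\|f\|_2$ on $(m-1)k\le j\le (m-2+2m\alpha)k$ (it equals $2^{k/4}$ at $j=(m-1)k$ and matches the plateau $2^{\frac{1-m\alpha}{2}k}$ at $j=(m-2+2m\alpha)k$), not $2^{\frac14(2k-j)}\|f\|_2$ on $k\le j\le 2m\alpha k$. Your version is false for $m>2$: for $k\le j<(m-1)k$ it would improve on Sj\"olin's sharp global bound $2^{k/4}$ over a time interval that still contains the extremizing times $t\sim 2^{-(m-1)k}$. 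For $1<m<2$ it is too weak: summing your three pieces yields $s(\delta)=\delta+\frac14$ up to the breakpoint $\frac{2m\alpha-1}{4(m-1)}$, rather than $(m-1)\delta+\frac14$ up to $\frac{2m\alpha-1}{4}$ as claimed. With the corrected junctions $(m-1)k\le(m-2+2m\alpha)k\le mk\le k/\alpha$ (the first inequality is exactly $\alpha>\frac{1}{2m}$, the others $\alpha<\frac12$ and $\alpha<\frac1m$), the three geometric sums give $\max\bigl\{(m-1)\delta+\tfrac14,\; m\delta+\tfrac{1-m\alpha}{2},\; \tfrac{\delta}{\alpha}\bigr\}$ with breakpoints $\frac{2m\alpha-1}{4}$ and $\frac{\alpha}{2}$, matching the statement and also consistent with Theorems \ref{theorem7} and \ref{theorem8}, which your exponents would not reproduce. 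The substantive task you correctly isolate --- proving the fractional, along-curve, time-localized maximal lemma --- remains, but with the exponents above.
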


		\begin{theorem}\label{theorem10}
			Let $\delta \in[0,\alpha)$ and $m\in(1,2 )$. For any curve $\gamma(x,t): \mathbb{R} \times[-1,1] \rightarrow \mathbb{R}$ satisfying $\gamma(x,0)=x$ which is bilipschitz continuous in \( x \), and $\alpha$-Hölder continuous in \( t \) with \(  \frac{1}{2} \leq \alpha < \frac{1}{m}  \), we have (\ref{pt convergence in r1}) whenever
			\[
			s>s(\delta)=\begin{cases}
				(m-1)\delta+\frac{1}{4}    \quad  &0  \leq \delta\leq \frac{1}{4}, \\ 
				\frac{m-1}{1-\alpha}\delta+\frac{1-m\alpha}{4(1-\alpha)}    \quad  & \frac{1}{4}  <\delta\leq \frac{2\alpha-1}{4}, \\ 
				m\delta+\frac{1-m\alpha}{2}   \quad  &\frac{2\alpha-1}{4}  <\delta\leq \frac{\alpha}{2}, \\ 
				\frac{\delta}{\alpha} \quad   &  \delta>\frac{\alpha}{2}.  \\
			\end{cases}
			\]. 
		\end{theorem}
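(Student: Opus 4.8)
The plan is to run the argument of Section~\ref{chap3} — it combines features of the proofs of Theorem~\ref{theorem2} and Theorem~\ref{theorem4} — with the dispersive phase $t|\xi|^{2}$ replaced throughout by $t|\xi|^{m}$. First I would apply the Nikishin--Stein reduction, exactly as in Proposition~\ref{steinmax}, to reduce the almost-everywhere statement (\ref{pt convergence in r1}) to the single maximal bound
\[
\left\|\sup_{0<t<1}\frac{\left|U^{m}_{\gamma}f(x,t)-f(x)\right|}{t^{\delta}}\right\|_{L^{2}([-1,1])}\lesssim_{\varepsilon}\|f\|_{H^{s(\delta)+\varepsilon}(\mathbb{R})},\qquad\varepsilon>0,
\]
then Littlewood--Paley decompose $f=\sum_{k\ge0}f_{k}$ with $\operatorname{supp}\widehat{f_{k}}\subset\{|\xi|\sim2^{k}\}$, dispose of the low-frequency terms $k\lesssim1$ by the mean value theorem as in (\ref{klesssim1})--(\ref{klesssim3}), and fix $k\gg1$ from now on.

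For a fixed block $f_{k}$ there are two critical time scales: the curve scale $2^{-k/\alpha}$, at which $t^{\alpha}2^{k}\sim1$, and the dispersive scale $2^{-mk}$, at which $t\,2^{mk}\sim1$. The hypothesis $\alpha<\tfrac1m$ is exactly the statement that $2^{-k/\alpha}<2^{-mk}$, so the three time ranges $(0,2^{-k/\alpha})$, $(2^{-k/\alpha},2^{-mk})$, $(2^{-mk},1)$ play the roles of the three ranges in (\ref{tdivision1})/(\ref{divisonalpha}). On $(0,2^{-k/\alpha})$ I would insert $f_{k}(\gamma(x,t))$ and Taylor-expand both $e^{it|\xi|^{m}}-1$ and $f_{k}(\gamma(x,t))-f_{k}(x)$, using $|\gamma(x,t)-x|\lesssim t^{\alpha}$ from the H\"older condition; the $j$-th term of the first expansion is $\lesssim2^{(m-1/\alpha)jk+\delta k/\alpha}\|f_{k}\|_{2}$, and since $\alpha<\tfrac1m$ makes the exponent $m-1/\alpha$ strictly negative the series sums, so this range contributes $\lesssim2^{\delta k/\alpha}\|f_{k}\|_{2}$ (just as $\alpha<\tfrac12$ was used in (\ref{firstterm2-2})). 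On the longest range $(2^{-mk},1)$, after a dyadic bound on $t^{-\delta}$, the main input is the global one-dimensional maximal estimate $\big\|\sup_{0<t<1}|U^{m}_{\gamma}f_{k}|\big\|_{L^{2}([-1,1])}\lesssim_{\varepsilon}2^{(1/4+\varepsilon)k}\|f_{k}\|_{2}$, which is available here because $m\alpha>\tfrac12$ places us in the range where the fractional Carleson problem along such curves is solved at regularity $\tfrac14$ (Cho--Shiraki \cite{MR4307013}, Yuan--Zhao \cite{MR4359958}).

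The middle range carries the content. I would decompose $t\sim2^{-j}$ with $k\le j\le k/\alpha$, bound $t^{-\delta}\sim2^{j\delta}$, and feed in the fractional counterparts of the short-time maximal estimates of Lemmas~\ref{proposition1}--\ref{proposition4}, namely $\big\|\sup_{t\in(0,2^{-j})}|U^{m}_{\gamma}f_{k}|\big\|_{L^{2}}\lesssim2^{\beta(k,j)}\|f_{k}\|_{2}$ for $\tfrac12\le\alpha<\tfrac1m$, $m\in(1,2)$, where $\beta$ is piecewise linear in $j$ depending on whether $2^{-j}$ lies above or below the dispersive scale $2^{-mk}$. The curve-dominated subrange $mk\le j\le k/\alpha$ (dispersion negligible, only the bilipschitz/H\"older geometry of $\gamma$ acting) should yield, after multiplying by $2^{j\delta}$ and summing, the exponents $m\delta+\tfrac{1-m\alpha}{2}$ (dominant term at $j=mk$) and $\tfrac{\delta}{\alpha}$ (dominant term at $j=k/\alpha$); the genuinely dispersive subrange $k\le j\le mk$ together with the $(2^{-mk},1)$ contribution should yield $(m-1)\delta+\tfrac14$ and $\tfrac{m-1}{1-\alpha}\delta+\tfrac{1-m\alpha}{4(1-\alpha)}$. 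Taking the largest of these exponents, then summing over $k$ and absorbing the resulting $k$ and $\log$ losses into $2^{\varepsilon k}$, is what assembles the piecewise function $s(\delta)$ of the statement, the breakpoints $\delta=\tfrac14,\ \tfrac{2\alpha-1}{4},\ \tfrac{\alpha}{2}$ marking where the dominant exponent changes.

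The hard part will be producing the short-time estimates $2^{\beta(k,j)}$ with the sharp exponents: this amounts to porting the rescaling together with the refined Strichartz / local-smoothing argument of \cite{Wang-Wang} (and, for the Taylor bookkeeping, the almost-orthogonality device of Lemma~5.1 in \cite{MR4297035}) from the parabola $\tau=|\xi|^{2}$ to the curve $\tau=|\xi|^{m}$ with $1<m<2$. On a single dyadic frequency annulus that curve still has non-vanishing curvature, so the underlying one-dimensional extension/decoupling input survives; what must be tracked with care is how the scaling exponents depend on $m$ and how the H\"older exponent $\alpha$ couples to $m$ in the two time regimes — precisely the bookkeeping that turns the three-piece $s(\delta)$ of Theorem~\ref{theorem4} into the four-piece one here. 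Granting those estimates, the remaining work is routine: the verification that $s(\delta)$ is the upper envelope of the finitely many exponents above, and the geometric summations in $j$ and $k$, which are identical to those in Section~\ref{chap3}.
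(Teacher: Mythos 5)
You should first be aware that the paper contains no proof of Theorem~\ref{theorem10}: Section~\ref{chap5} states the fractional results and explicitly omits all arguments as ``similar to the case $m=2$''. Your outline is therefore the route the paper intends, and much of it is sound: the Nikishin--Stein reduction, the Littlewood--Paley decomposition, the Taylor expansion below the curve scale $2^{-k/\alpha}$ (your observation that $\alpha<\tfrac1m$ makes $m-\tfrac1\alpha<0$ and hence the series summable is exactly the right point), and the identification of the curve-dominated regime $mk\le j\le k/\alpha$ with bound $2^{(k-\alpha j)/2}$, whose endpoints $j=mk$ and $j=k/\alpha$ produce $m\delta+\tfrac{1-m\alpha}{2}$ and $\tfrac{\delta}{\alpha}$.

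There are, however, concrete gaps. First, the entire substance of the $m=2$ proofs is the short-time maximal estimates imported from \cite{Wang-Wang} (Lemmas~\ref{proposition1}--\ref{proposition4}); you defer their fractional analogues without stating them, and they are not a routine port: reverse-engineering the slopes and intercepts of the stated $s(\delta)$, the piece $\tfrac{m-1}{1-\alpha}\delta+\tfrac{1-m\alpha}{4(1-\alpha)}$ forces an intermediate regime $\bigl\|\sup_{t<2^{-j}}|U^m_\gamma f_k|\bigr\|_2\lesssim 2^{\frac{(2-m)k+(1-2\alpha)j}{4}}$ for $\tfrac{m-1}{1-\alpha}k\le j\le mk$, which has no counterpart in any quoted lemma (for $m=2$, $\alpha\ge\tfrac12$ that window is empty). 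Second, your outer time scale is wrong: the third scale is the transport scale $2^{-(m-1)k}$ (the block moves distance $t|\xi|^{m-1}$), not $2^{-mk}$; applying the global bound $2^{k/4}$ on $(2^{-mk},1)$ with $t^{-\delta}\le 2^{mk\delta}$ gives $2^{k(m\delta+1/4)}$, strictly worse than the needed $2^{k((m-1)\delta+1/4)}$, and the first piece of $s(\delta)$ comes precisely from $j=(m-1)k$, which lies outside your ``dispersive subrange $k\le j\le mk$'' (your two paragraphs also disagree about whether the middle range is $mk\le j\le k/\alpha$ or $k\le j\le k/\alpha$). Third, the final assembly cannot terminate as you describe: under the hypotheses $m>1$, $\alpha<\tfrac1m<1$ one has $\tfrac{2\alpha-1}{4}<\tfrac14$, so the second case of the stated $s(\delta)$ is empty and the first and third cases overlap and disagree on $(\tfrac{2\alpha-1}{4},\tfrac14]$; the stated function is not the upper envelope of your exponents --- that envelope switches from $(m-1)\delta+\tfrac14$ to $m\delta+\tfrac{1-m\alpha}{2}$ at $\delta=\tfrac{2m\alpha-1}{4}$ and reproduces the formula of Theorem~\ref{theorem9}, with the second line always strictly below the maximum. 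Either the target statement needs correcting or a stronger short-time input is intended; as written, your sketch cannot ``assemble the piecewise function of the statement''.
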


		Meng Wang, Department of Mathematics, Zhejiang University, Hangzhou 310058,  People’s Republic of China
		
		E-mail address: mathdreamcn@zju.edu.cn
		
		Zhichao Wang, Department of Mathematics, Zhejiang University, Hangzhou 310058,  People’s Republic of China
		
		E-mail address: zhichaowang@zju.edu.cn

	\end{sloppypar}

\begin{thebibliography}{10}

\bibitem{MR3241836}
J.~Bourgain.
\newblock On the {S}chr\"{o}dinger maximal function in higher dimension.
\newblock {\em Tr. Mat. Inst. Steklova}, 280:53--66, 2013.

\bibitem{MR3574661}
J.~Bourgain.
\newblock A note on the {S}chr\"{o}dinger maximal function.
\newblock {\em J. Anal. Math.}, 130:393--396, 2016.

\bibitem{MR1315543}
Jean Bourgain.
\newblock Some new estimates on oscillatory integrals.
\newblock In {\em Essays on {F}ourier analysis in honor of {E}lias {M}. {S}tein
  ({P}rinceton, {NJ}, 1991)}, volume~42 of {\em Princeton Math. Ser.}, pages
  83--112. Princeton Univ. Press, Princeton, NJ, 1995.

\bibitem{MR3922421}
Zhenbin Cao, Dashan Fan, and Meng Wang.
\newblock The rate of convergence on {S}chr\"{o}dinger operator.
\newblock {\em Illinois J. Math.}, 62(1-4):365--380, 2018.

\bibitem{MR4564456}
Zhenbin Cao and Changxing Miao.
\newblock Sharp pointwise convergence on the {S}chr\"odinger operator along one
  class of curves.
\newblock {\em Bull. Sci. Math.}, 184:Paper No. 103254, 17, 2023.

\bibitem{MR0848141}
Anthony Carbery.
\newblock Radial {F}ourier multipliers and associated maximal functions.
\newblock In {\em Recent progress in {F}ourier analysis ({E}l {E}scorial,
  1983)}, volume 111 of {\em North-Holland Math. Stud.}, pages 49--56.
  North-Holland, Amsterdam, 1985.

\bibitem{MR0576038}
Lennart Carleson.
\newblock Some analytic problems related to statistical mechanics.
\newblock In {\em Euclidean harmonic analysis ({P}roc. {S}em., {U}niv.
  {M}aryland, {C}ollege {P}ark, {M}d., 1979)}, volume 779 of {\em Lecture Notes
  in Math.}, pages 5--45. Springer, Berlin, 1980.

\bibitem{MR4367790}
Chu-Hee Cho and Hyerim Ko.
\newblock Pointwise convergence of the fractional {S}chr\"{o}dinger equation in
  {$\Bbb{R}^2$}.
\newblock {\em Taiwanese J. Math.}, 26(1):177--200, 2022.

\bibitem{MR2970037}
Chu-Hee Cho, Sanghyuk Lee, and Ana Vargas.
\newblock Problems on pointwise convergence of solutions to the
  {S}chr\"{o}dinger equation.
\newblock {\em J. Fourier Anal. Appl.}, 18(5):972--994, 2012.

\bibitem{MR4307013}
Chu-Hee Cho and Shobu Shiraki.
\newblock Pointwise convergence along a tangential curve for the fractional
  {S}chr\"{o}dinger equation.
\newblock {\em Ann. Fenn. Math.}, 46(2):993--1005, 2021.

\bibitem{MR0729347}
Michael~G. Cowling.
\newblock Pointwise behavior of solutions to {S}chr\"{o}dinger equations.
\newblock In {\em Harmonic analysis ({C}ortona, 1982)}, volume 992 of {\em
  Lecture Notes in Math.}, pages 83--90. Springer, Berlin, 1983.

\bibitem{MR0654188}
Bj\"{o}rn E.~J. Dahlberg and Carlos~E. Kenig.
\newblock A note on the almost everywhere behavior of solutions to the
  {S}chr\"{o}dinger equation.
\newblock In {\em Harmonic analysis ({M}inneapolis, {M}inn., 1981)}, volume 908
  of {\em Lecture Notes in Math.}, pages 205--209. Springer, Berlin-New York,
  1982.

\bibitem{Demeter2016SchrdingerMF}
Ciprian Demeter and Shaoming Guo.
\newblock Schr{\"o}dinger maximal function estimates via the pseudoconformal
  transformation.
\newblock {\em arXiv: Classical Analysis and ODEs}, 2016.

\bibitem{MR3702674}
Xiumin Du, Larry Guth, and Xiaochun Li.
\newblock A sharp {S}chr\"{o}dinger maximal estimate in {$\Bbb R^2$}.
\newblock {\em Ann. of Math. (2)}, 186(2):607--640, 2017.

\bibitem{MR3961084}
Xiumin Du and Ruixiang Zhang.
\newblock Sharp {$L^2$} estimates of the {S}chr\"{o}dinger maximal function in
  higher dimensions.
\newblock {\em Ann. of Math. (2)}, 189(3):837--861, 2019.

\bibitem{Fan-Wang}
Dashan Fan and Meng Wang.
\newblock Sharp convergence rate on schrödinger operator.
\newblock 2024.
\newblock submitted.

\bibitem{MR2264734}
Sanghyuk Lee.
\newblock On pointwise convergence of the solutions to {S}chr\"{o}dinger
  equations in {$\Bbb R^2$}.
\newblock {\em Int. Math. Res. Not.}, pages Art. ID 32597, 21, 2006.

\bibitem{MR2871144}
Sanghyuk Lee and Keith~M. Rogers.
\newblock The {S}chr\"{o}dinger equation along curves and the quantum harmonic
  oscillator.
\newblock {\em Adv. Math.}, 229(3):1359--1379, 2012.

\bibitem{Li2021OnCP}
Wenjuan Li and Huiju Wang.
\newblock On convergence properties for generalized {S}chr\"{o}dinger operators
  along tangential curves.
\newblock 2021.
\newblock Arxiv.

\bibitem{MR4297035}
Wenjuan Li and Huiju Wang.
\newblock A study on a class of generalized {S}chr\"{o}dinger operators.
\newblock {\em J. Funct. Anal.}, 281(9):Paper No. 109203, 38, 2021.

\bibitem{MR3613507}
Renato Luc\`a and Keith~M. Rogers.
\newblock Coherence on fractals versus pointwise convergence for the
  {S}chr\"{o}dinger equation.
\newblock {\em Comm. Math. Phys.}, 351(1):341--359, 2017.

\bibitem{MR3903115}
Renato Luc\`a and Keith~M. Rogers.
\newblock A note on pointwise convergence for the {S}chr\"{o}dinger equation.
\newblock {\em Math. Proc. Cambridge Philos. Soc.}, 166(2):209--218, 2019.

\bibitem{MR2044636}
Pertti Mattila.
\newblock Hausdorff dimension, projections, and the {F}ourier transform.
\newblock {\em Publ. Mat.}, 48(1):3--48, 2004.

\bibitem{MR3476484}
Changxing Miao, Jianwei Yang, and Jiqiang Zheng.
\newblock An improved maximal inequality for 2{D} fractional order
  {S}chr\"{o}dinger operators.
\newblock {\em Studia Math.}, 230(2):121--165, 2015.

\bibitem{MR4795804}
Javier Minguill\'on.
\newblock A note on almost everywhere convergence along tangential curves to
  the {S}chr\"odinger equation initial datum.
\newblock {\em J. Geom. Anal.}, 34(11):Paper No. 333, 7, 2024.

\bibitem{MR1413873}
A.~Moyua, A.~Vargas, and L.~Vega.
\newblock Schr\"{o}dinger maximal function and restriction properties of the
  {F}ourier transform.
\newblock {\em Internat. Math. Res. Notices}, (16):793--815, 1996.

\bibitem{MR0904948}
Per Sj\"{o}lin.
\newblock Regularity of solutions to the {S}chr\"{o}dinger equation.
\newblock {\em Duke Math. J.}, 55(3):699--715, 1987.

\bibitem{MR1748920}
T.~Tao and A.~Vargas.
\newblock A bilinear approach to cone multipliers. {I}. {R}estriction
  estimates.
\newblock {\em Geom. Funct. Anal.}, 10(1):185--215, 2000.

\bibitem{MR0934859}
Luis Vega.
\newblock Schr\"{o}dinger equations: pointwise convergence to the initial data.
\newblock {\em Proc. Amer. Math. Soc.}, 102(4):874--878, 1988.

\bibitem{MR1347033}
Bj\"{o}rn~G. Walther.
\newblock Maximal estimates for oscillatory integrals with concave phase.
\newblock In {\em Harmonic analysis and operator theory ({C}aracas, 1994)},
  volume 189 of {\em Contemp. Math.}, pages 485--495. Amer. Math. Soc.,
  Providence, RI, 1995.

\bibitem{Wang-Wang}
Meng Wang and Zhichao Wang.
\newblock Sharp convergence for sequences of schrödinger means along curves.
\newblock 2024+.
\newblock to be published in CAM Ser. B.

\bibitem{MR4877623}
Meng Wang and Shuijiang Zhao.
\newblock Sharp convergence rate on {S}chr\"odinger-type operators.
\newblock {\em Math. Nachr.}, 298(3):1082--1096, 2025.

\bibitem{MR4359958}
Jiye Yuan and Tengfei Zhao.
\newblock Pointwise convergence along a tangential curve for the fractional
  {S}chr\"{o}dinger equation with {$0<m<1$}.
\newblock {\em Math. Methods Appl. Sci.}, 45(1):456--467, 2022.

\bibitem{MR3247302}
Chunjie Zhang.
\newblock Pointwise convergence of solutions to {S}chr\"{o}dinger type
  equations.
\newblock {\em Nonlinear Anal.}, 109:180--186, 2014.

\end{thebibliography}
\end{document}